\theoremstyle{plain}
\newtheorem{thm}{Theorem}[section]
\newtheorem{prop}[thm]{Proposition}
\newtheorem{lem}[thm]{Lemma}
\newtheorem{cor}[thm]{Corollary}
\theoremstyle{definition}
\newtheorem{defn}{Definition}
\theoremstyle{remark}
\newtheorem{remark}{Remark}
\def\cB{{\mathcal B}}
\def\c{\curvearrowright}
\def\cC{{\mathcal C}}
\def\bE{{\mathbb E}}
\def\ed{\textrm{End}}
\def\cF{{\mathcal F}}\def\sF{{\mathcal F}}
\def\sG{{\mathcal G}}
\def\cG{{\mathcal G}}
\def\N{{\mathbb N}}
\def\cN{{\mathcal{N}}}
\def\P{{\bf P}}
\def\sP{{\mathcal P}}
\def\cP{{\mathcal P}}
\def\past{{\textrm{Past}}}
\def\R{{\mathbb R}}
\def\bX{{\bar X}}
\def\chix{{\raise.5ex\hbox{$\chi$}}}
\def\bY{{\bar Y}}
\def\Z{{\mathbb Z}}
\begin{document}
\title{Nonabelian free group actions: Markov processes, the Abramov-Rohlin formula and Yuzvinskii's formula}
\author{Lewis Bowen\footnote{email:lpbowen@math.hawaii.edu} \\ University of Hawaii}
\begin{abstract}
This paper introduces Markov chains and processes over nonabelian free groups and semigroups. We prove a formula for the $f$-invariant of a Markov chain over a free group in terms of transition matrices that parallels the classical formula for the entropy a Markov chain. Applications include free group analogues of the Abramov-Rohlin formula for skew-product actions and Yuzvinskii's addition formula for algebraic actions. 
\end{abstract}
\maketitle
\noindent
{\bf Keywords}: $f$-invariant, tree entropy, Yuzvinskii's formula, Abramov-Rohlin, Abramov-Rokhlin, entropy, free groups, Markov processes, skew-product actions, random regular graphs.\\
{\bf MSC}:37A35\\

\noindent


\section{Introduction}

A (classical) Markov chain is an $\N$ or $\Z$-indexed family of random variables $(X_i)$ each taking values in a set $K$ (called the state space) and satisfying the following condition: for any $i \in \N$ or $\Z$ and $k_{i+1}, k_i, \ldots \in K$
 $$Pr(X_{i+1} = k_{i+1} | X_i = k_i) = Pr(X_{i+1}=k_{i+1}~|~X_i=k_i, X_{i-1}=k_{i-1},\ldots)=M_{k_{i},k_{i+1}}$$
 where $M$ is a fixed $K \times K$ matrix called the transition matrix. We will always assume that $K$ is at most countable and has the discrete topology.

These objects can be viewed from an ergodic theory perspective as follows. Let $K^G$ denote the set of all functions $G \to K$ where $G$ equals $\Z$ or $\N$. Let $\mu$ be the probability measure on $K^G$ defined by setting $\mu(E)$ equal to the probability that the Markov chain $(X_i)$ (considered as a function from $G$ to $K$) is in $E$. Let $\alpha$ be the ``time $0$ partition'' defined by $\alpha=\{A_k~:~k\in K\}$ where $A_k=\{x:G\to K~:~x(0)=k\}$. Let $\sigma:K^G \to K^G$ be the shift-operator, defined by $\sigma(x)(n) = x(n+1)$. We call the quadruple $(\sigma,K^G,\mu,\alpha)$ a {\bf Markov process}.

When $\mu$ is shift-invariant, it satisfies several nice properties. First, the entropy rate $h(\sigma, \mu)$ equals $H(\alpha | \sigma^{-1}\alpha)$ where $H(\cdot)$ is Shannon's entropy (see \S \ref{sub:classical} for the definition). In fact, this property characterizes Markov processes. Second, the space of all $n$-step Markov-processes (which are generalizations of the above) is dense in the space of shift-invariant Borel probability measures on $K^G$ with the weak* topology. 

The purpose of this paper is to build an analogous theory when $G$ is a free group or semigroup and entropy is replaced with the $f$-invariant. The latter is a measure-conjugacy invariant that generalizes Kolmogorov-Sinai entropy. It was introduced in [Bo08a]. For the reader's convenience, the $f$-invariant is defined in \S \ref{sub:f} and (most of) the proof that it is a measure-conjugacy is recalled in \S \ref{sec:alternative}.

The definition of Markov chain over a free group is similar to the definition of tree-indexed Markov chain (see [Pe95] and the references therein). The primary novelty here is that we study ergodic-theoretic aspects and especially relationships with entropy theory. 

The notion of a Markov chain is connected with the notion of a ``past''. For example, the past of an element $n\in \Z$ is the set $\Z \cap (-\infty,n)$. A stationary Markov chain $(X_i)_{i \in \Z}$ is characterized by the property that the distribution of $X_i$ conditioned on $X_{i-1}$ (its immediate past) equals the distribution of $X_i$ conditioned on $X_j$ for all $j$ in the past of $i$ and that this conditional distribution is independent of $i$.

If $G=\langle s_1,\ldots,s_r\rangle$ is a free group then every element $g\in G$ has $2r$ different ``pasts'', corresponding to the given generators. For example, the past of an element $g$ in the direction of $s\in \{s_1^{\pm 1}, \ldots, s_r^{\pm 1}\}$ is the set of all elements of the form $fsg$ where $f\in G$, $|fsg|=|f|+|sg|$ and $|\cdot|$ denotes the word-metric on $G$.  A Markov chain over $G$ is a $G$-indexed family of random variables $(X_g)_{g\in G}$ such that the distribution of $X_g$ conditioned on $X_{sg}$ equals the distribution of $X_g$ conditioned on $X_h$ for all $h$ in the past of $g$ in the $s$-direction and that this conditional distribution is independent of $g$. 

To view this from this ergodic theory perspective, for $g \in G$, let $T_g:K^G \to K^G$ be the shift-operator defined by $T_g(x)(f)=x(fg)$. Let $\mu$ be the measure on $K^G$ defined by $\mu(E)$ equals the probability that the Markov chain $(X_g)_{g\in G}$ considered as a function from $G$ to $K$ is in $E$. Let $\alpha$ be the ``time $e$-partition'': $\alpha=\{A_k~:~k\in K\}$ where $A_k=\{x\in K^G~:~x(e)=k\}$. The action $G \c^T (K^G,\mu)$ with the partition $\alpha$ is a Markov process. A more general and precise definition is in \S \ref{sec:markovprocesses}.

In the classical case, stationary Markov chains can be easily constructed in terms of transition matrices and stationary vectors. We show that there is an analogous construction in the case of free groups. This should be useful to the study of the classification problem for dynamical systems over free groups up to isomorphism. For example, it is shown in \S \ref{sec:example3} that there is mixing Markov chain that is not isomorphic to any Bernoulli shift. This contrasts with the Friedman-Ornstein theorem [FO70] that every mixing Markov chain over the integers is isomorphic to a Bernoulli shift. We also exhibit examples of Markov chains related to well-studied problems in the theory of random regular graphs.


Assume now that $(X_g)_{g\in G}$ is a stationary Markov chain. This implies $\mu$ is shift-invariant. We will show that the $f$-invariant of the system $G \c^T (K^G,\mu)$ equals $F(\mu,\alpha):= (1-2r)H(\alpha) + \sum_{i=1}^r H(\alpha \vee T_{s_i}^{-1}\alpha)$. Indeed, this condition characterizes Markov processes. Since $f(\mu,\alpha) \le F(\mu,\alpha)$ holds in general, it follows that for every shift-invariant Borel probability measure $\nu$ on $K^G$ that equals $\mu$ on the partitions $\alpha \vee T_{s_i}^{-1}\alpha$, $f(\nu,\alpha) \le f(\mu,\alpha)$ with equality if and only if $\mu=\nu$. In brief: the $f$-invariant is uniquely maximized on Markov chains. Moreover, there is a precise sense in which every process over $G$ can be approximated by a sequence of ``higher-step'' Markov processes. These tools are used to prove analogues of two classical results: the Abramov-Rohlin formula and Yuzvinskii's addition formula. To explain these results precisely, let us review the definitions of entropy and the $f$-invariant next.

\subsection{Classical results}\label{sub:classical}

Let $(X,\cB,\mu)$ be a probability space. Let $T:X \to X$ be a measure-preserving transformation. We use $\alpha, \beta$ to denote measurable partitions of $X$ into at most countable many subsets. The join of $\alpha$ and $\beta$ is their common refinement, defined by $\alpha \vee \beta :=\{A \cap B~:~ A\in\alpha, B \in \beta\}$. The {\bf entropy} of $\alpha$ is 
$$H(\alpha):= -\sum_{A\in\alpha} \mu(A) \log\big(\mu(A)\big).$$
We will need a relative version of this quantity as well. So let $\cF \subset \cB$ be a sub-$\sigma$-algebra. Given $A \in \cB$, let $\mu(A|\cF)$ be the conditional expectation of the characteristic function $\chi_A$ of $A$ with respect to $\cF$. The {\bf conditional information function} $I(\alpha|\sF):X \to \R$ is defined by
$$I(\alpha|\sF)(x) := -\log\big(\mu(A_x|\sF)(x)\big)$$
where $A_x$ is the atom of $\alpha$ containing $x$.  The {\bf entropy of $\alpha$ conditioned on $\sF$} is 
$$H(\alpha|\sF) := \int_X I(\alpha | \sF)(x) \, d\mu(x).$$
The {\bf mean entropy} of $\alpha$ given $\sF$ with respect to $T$ is 
$$h(T, \alpha|\sF) := \lim_{n\to\infty} \frac{1}{n+1} H \Big( \bigvee_{i=0}^n   T^{-i}\alpha|\sF\Big).$$
If $\sF$ is $T$-invariant (i.e., if $T^{-1}A \in \sF ~\forall A \in \sF$) then this limit exists. It is well-known that
$$h(T,\alpha|\sF)= H\Big(\alpha | \sF \vee \bigvee_{i=1}^\infty T^{-i}\alpha\Big).$$
Define $h(T|\sF) := \sup_{\alpha} h(T, \alpha|\sF)$ where the supremum is over all partitions $\alpha$ with $H(\alpha|\sF)<\infty$. Let $\tau=\{X,\emptyset\}$ be the minimal $\sigma$-algebra and let $h(T,\alpha)=h(T,\alpha|\tau)$ and $h(T):=h(T|\tau)$. When it is helpful to emphasize the measure we will write $h(T,\mu,\alpha)$ for $h(T,\alpha)$.

We will generalize the next two theorems to actions of free groups.

\begin{thm}[The Abramov-Rohlin Formula]\label{thm:AR}
If $\alpha$ and $\beta$ are any two measurable partitions with $H(\alpha)+H(\beta)<\infty$ then
$$h(T, \alpha \vee \beta) = h(T,\alpha) + h(T, \beta|\alpha^T).$$
\end{thm}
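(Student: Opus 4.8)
\emph{Proof sketch.} The plan is to peel $\alpha$ off the joint entropy with the chain rule and then recognize the remainder as a mean conditional entropy. Since $H(\alpha)+H(\beta)<\infty$, every finite join of translates of $\alpha$ and $\beta$ has finite entropy, so the chain rule $H(\gamma\vee\delta)=H(\gamma)+H(\delta\mid\gamma)$ gives, for each $N$,
$$H\Big(\bigvee_{i=0}^{N}T^{-i}(\alpha\vee\beta)\Big)=H\Big(\bigvee_{i=0}^{N}T^{-i}\alpha\Big)+H\Big(\bigvee_{i=0}^{N}T^{-i}\beta\ \Big|\ \bigvee_{i=0}^{N}T^{-i}\alpha\Big).$$
Dividing by $N+1$ and letting $N\to\infty$, the left side tends to $h(T,\alpha\vee\beta)$ and the first term on the right tends to $h(T,\alpha)$, so the theorem reduces to the claim
$$\lim_{N\to\infty}\frac{1}{N+1}H\Big(\bigvee_{i=0}^{N}T^{-i}\beta\ \Big|\ \bigvee_{i=0}^{N}T^{-i}\alpha\Big)=h(T,\beta\mid\alpha^T),$$
where $\alpha^T$ denotes the smallest $T$-invariant sub-$\sigma$-algebra containing $\alpha$ (equal to $\bigvee_{i\in\Z}T^{i}\alpha$ when $T$ is invertible); the limit on the right exists because $\alpha^T$ is $T$-invariant.

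The inequality ``$\ge$'' will be immediate: since $\bigvee_{i=0}^{N}T^{-i}\alpha\subseteq\alpha^T$, enlarging the conditioning $\sigma$-algebra lowers entropy, so $H\big(\bigvee_{i=0}^{N}T^{-i}\beta\mid\bigvee_{i=0}^{N}T^{-i}\alpha\big)\ge H\big(\bigvee_{i=0}^{N}T^{-i}\beta\mid\alpha^T\big)$, and dividing by $N+1$ and using the definition of $h(T,\beta\mid\alpha^T)$ finishes that side. The reverse inequality is the heart of the matter, and the obstacle is an asymmetry: the $\sigma$-algebra $\bigvee_{i=0}^{N}T^{-i}\alpha$ records only the ``$\alpha$-future'' of the relevant coordinates, not the full invariant algebra $\alpha^T$, and one must show this discrepancy costs only $o(N)$. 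I would expand by the chain rule
$$H\Big(\bigvee_{i=0}^{N}T^{-i}\beta\ \Big|\ \bigvee_{i=0}^{N}T^{-i}\alpha\Big)=\sum_{j=0}^{N}H\Big(T^{-j}\beta\ \Big|\ \bigvee_{i=0}^{N}T^{-i}\alpha\vee\bigvee_{l=j+1}^{N}T^{-l}\beta\Big),$$
fix a memory length $M\ge1$, and for $j\le N-M$ discard from the conditioning all $\beta$-coordinates but the $M$ nearest ones (this only increases the term); then apply $T^{j}$ and use $T$-invariance of $\mu$ to rewrite the $j$-th term as $H\big(\beta\mid\bigvee_{i=-j}^{N-j}T^{-i}\alpha\vee\bigvee_{i=1}^{M}T^{-i}\beta\big)$.

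Now the window $\bigvee_{i=-j}^{N-j}T^{-i}\alpha$ increases to $\bigvee_{i\in\Z}T^{-i}\alpha\supseteq\alpha^T$ as $j$ and $N-j$ grow, so by the standard convergence of conditional entropy along an increasing sequence of $\sigma$-algebras, given $\varepsilon>0$ there is an $L$ with $H\big(\beta\mid\bigvee_{i=-L}^{L}T^{-i}\alpha\vee\bigvee_{i=1}^{M}T^{-i}\beta\big)\le H\big(\beta\mid\alpha^T\vee\bigvee_{i=1}^{M}T^{-i}\beta\big)+\varepsilon$; hence every term with $L\le j\le N-L$ is at most this quantity, while the at most $2L+M$ remaining (boundary) terms are each at most $H(\beta)<\infty$. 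Summing, dividing by $N+1$, and letting $N\to\infty$ yields $\limsup\le H\big(\beta\mid\alpha^T\vee\bigvee_{i=1}^{M}T^{-i}\beta\big)+\varepsilon$; then letting $M\to\infty$ and invoking the identity $h(T,\beta\mid\alpha^T)=H\big(\beta\mid\alpha^T\vee\bigvee_{i=1}^{\infty}T^{-i}\beta\big)$ recalled above (valid since $\alpha^T$ is $T$-invariant), and finally $\varepsilon\to0$, gives $\limsup\le h(T,\beta\mid\alpha^T)$, which closes the argument. The only genuinely delicate point is this exhaustion/boundary-term estimate in the reverse inequality; everything else is the chain rule, monotonicity of conditional entropy, and shift-invariance of $\mu$.
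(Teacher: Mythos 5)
Your argument is correct. There is nothing in the paper to compare it against: Theorem \ref{thm:AR} is quoted as a classical result with references to [AR62] and [BC92] and is not proved here. What you have written is essentially the standard Bogensch\"utz--Crauel argument: the chain rule reduces the theorem to showing $\frac{1}{N+1}H\big(\bigvee_{i=0}^{N}T^{-i}\beta\mid\bigvee_{i=0}^{N}T^{-i}\alpha\big)\to h(T,\beta\mid\alpha^T)$; the inequality $\ge$ is monotonicity of conditional entropy under enlarging the conditioning algebra; and the inequality $\le$ comes from telescoping, truncating the $\beta$-memory to length $M$, shifting by $T^{j}$, and absorbing the $O(L+M)$ boundary terms into $o(N)$ using $H(\beta)<\infty$. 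Every ingredient you invoke --- the chain rule for conditional entropy, monotonicity, the martingale convergence $H(\beta\mid\sF_n)\downarrow H(\beta\mid\bigvee_n\sF_n)$ for a finite-entropy partition, and the identity $h(T,\beta\mid\sF)=H\big(\beta\mid\sF\vee\bigvee_{i\ge1}T^{-i}\beta\big)$ for $T$-invariant $\sF$ --- is legitimate and used correctly, and the finiteness hypothesis $H(\alpha)+H(\beta)<\infty$ is exactly what makes the boundary estimate and the martingale step work. One small caveat: the theorem is stated for an arbitrary measure-preserving $T$, and your two-sided window $\bigvee_{i=-j}^{N-j}T^{-i}\alpha$ only makes sense when $T$ is invertible. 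In the non-invertible case $\alpha^T=\bigvee_{i\ge0}T^{-i}\alpha$ and the argument simplifies: before shifting by $T^{j}$, discard the coordinates $T^{-i}\alpha$ with $i<j$ (which only increases the $j$-th term), so the window becomes $\bigvee_{i=0}^{N-j}T^{-i}\alpha$, which increases to $\alpha^T$ as $N-j\to\infty$. With that remark your proof covers both cases.
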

The original Abramov-Rohlin formula, proven in [AR62], was stated in terms of skew-products. The version above is due to Bogensch\"utz and Crauel [BC92]. This formula was generalized in [WZ92] to amenable group actions. See [Da01] for an alternative proof using orbit equivalence theory.


\begin{thm}[Yuzvinskii's Addition Formula]\label{thm:yuz}
Let $\sG$ be a separable compact group, $T:\sG \to \sG$ a Haar measure-preserving homomorphism and $\cN<\sG$ a closed normal $T$-invariant subgroup. Let $T_{\sG/\cN}:\sG/\cN \to \sG/\cN$ be the induced homomorphism and $T_{\cN}:\cN \to \cN$ the restriction of $T$ to $\cN$. Then
$$h(T)=h(T_{\sG/\cN}) + h(T_\cN)$$
where each entropy rate is with respect to the Haar probability measure on $\sG, \sG/\cN$ and $\cN$ respectively.
\end{thm}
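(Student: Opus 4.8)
\medskip
\noindent\textbf{Proof idea.} The plan is to reduce to a relative entropy statement by an Abramov--Rohlin argument and then identify that relative entropy with $h(T_\cN)$. Write $\pi\colon\sG\to\sG/\cN$ for the quotient homomorphism. Since $T$ preserves Haar and $\pi$ pushes Haar to Haar, $T_{\sG/\cN}$ is Haar-preserving and $\pi$ is a factor map of $(\sG,T)$; disintegrating Haar over $\pi$ also shows $T_\cN$ is Haar-preserving. Let $\beta_1\le\beta_2\le\cdots$ be finite partitions of $\sG/\cN$ generating its $\sigma$-algebra and put $\gamma_j:=\pi^{-1}\beta_j$. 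Applying Theorem~\ref{thm:AR} with the pair of partitions $\gamma_j$ and an arbitrary finite partition $\alpha$ of $\sG$, using $h(T,\gamma_j)=h(T_{\sG/\cN},\beta_j)$ and that $\bigvee_{i\ge0}T^{-i}\gamma_j=\pi^{-1}\big(\bigvee_{i\ge0}T_{\sG/\cN}^{-i}\beta_j\big)$ increases to $\pi^{-1}\cB_{\sG/\cN}$, then letting $j\to\infty$ (martingale continuity of conditional entropy) and taking the supremum over $\alpha$, one obtains
$$h(T)=h(T_{\sG/\cN})+h\big(T\bigm|\pi^{-1}\cB_{\sG/\cN}\big),$$
whether the terms are finite or infinite. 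So the theorem is equivalent to $h\big(T\mid\pi^{-1}\cB_{\sG/\cN}\big)=h(T_\cN)$.

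To analyse the right-hand side, pick a Borel section $c\colon\sG/\cN\to\sG$ of $\pi$ (measurable selection). Then $g\mapsto\big(\pi(g),c(\pi(g))^{-1}g\big)$ identifies $\sG$ with $(\sG/\cN)\times\cN$ as measure spaces (Haar $\mapsto$ product of Haars), under which $T$ becomes the skew product $T(y,n)=\big(T_{\sG/\cN}y,\;b_y\,T_\cN(n)\big)$ with $b_y=c(T_{\sG/\cN}y)^{-1}T(c(y))\in\cN$, and $\pi^{-1}\cB_{\sG/\cN}$ becomes the pullback of $\cB_{\sG/\cN}$. Iterating, the $i$-th fibre map over $y$ is a left translation of $\cN$ composed with $T_\cN^{\,i}$, so for a finite partition $\bar\alpha$ of the $\cN$-coordinate the trace of $\bigvee_{i=0}^nT^{-i}\bar\alpha$ on the fibre over $y$ is $\bigvee_{i=0}^nT_\cN^{-i}(g_{i,y}\bar\alpha)$ for suitable $g_{i,y}\in\cN$ with $g_{0,y}=e$. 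The inequality $h\big(T\mid\pi^{-1}\cB_{\sG/\cN}\big)\le h(T_\cN)$ is then soft: approximate the cocycle $(g_{i,y})$ by one taking finitely many values, in a finite set $G\subset\cN$, chosen so that $g_{i,y}\bar\alpha$ differs from its approximant on a set of small measure (take $\bar\alpha$ with thin boundary collars), and coarsen using $\bigvee_iT_\cN^{-i}(\tilde g_{i,y}\bar\alpha)\le\bigvee_iT_\cN^{-i}\big(\bigvee_{g\in G}g\bar\alpha\big)$; letting the approximation improve and $\bar\alpha$ vary gives the bound. This already yields $h(T)\le h(T_{\sG/\cN})+h(T_\cN)$.

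The reverse inequality $h\big(T\mid\pi^{-1}\cB_{\sG/\cN}\big)\ge h(T_\cN)$ is the crux, and I do not expect a purely soft proof: it asserts that iterating and randomly left-translating a partition of $\cN$ loses no entropy, which is the ``hard half'' of Yuzvinskii's formula where earlier arguments had gaps. When $\sG$ is a compact Lie group it is elementary: all entropies are finite, $h$ is computed from the moduli of the eigenvalues of the differential of $T$ on $\mathrm{Lie}(\sG)$ (the action on $\pi_0(\sG)$ being finite, hence of entropy zero), and the short exact sequence $0\to\mathrm{Lie}(\cN)\to\mathrm{Lie}(\sG)\to\mathrm{Lie}(\sG/\cN)\to0$ of $T$-modules makes the characteristic polynomials, hence the logarithmic Mahler measures, hence the entropies, add. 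For a general separable compact $\sG$ one would exhaust it by a decreasing sequence of closed normal subgroups $L_k$ with $\bigcap_kL_k=\{e\}$ and $\sG/L_k$ Lie, chosen compatibly with $\cN$ (so that the quotient by $L_k$, the image $\cN L_k/L_k$, and $L_k\cap\cN$ all interact with the decomposition above), reduce to the Lie case level by level, and pass to the limit. The essential difficulty --- and the main obstacle in the whole proof --- is that one cannot take the $L_k$ to be $T$-invariant (e.g.\ the shift on $\prod_{\Z}\Z/2$ has entropy $\log 2$ but no non-trivial $T$-invariant Lie quotient), so the limiting argument is delicate; this is precisely the point at which I would invoke Yuzvinskii's structure theory for endomorphisms of compact groups (and its later refinements by Lind and Ward in the solenoidal and totally disconnected cases) rather than reprove it. In the abelian case this last step is cleaner, being Pontryagin-dual to the additivity of entropy (equivalently, of logarithmic Mahler measure) along a short exact sequence of $\Z[T^{\pm1}]$-modules.
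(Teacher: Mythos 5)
This theorem is stated in the paper only as classical background: the author gives no proof, citing [Yu65] for the original argument, [Th71] for the skew-product strengthening, and [LSW90] for the $\Z^d$ version. So there is no in-paper proof to compare against line by line; what can be said is that your outline reproduces the standard architecture faithfully, and in fact mirrors exactly what the paper does for its own free-group analogue in \S\ref{sec:yuz}: the Abramov--Rohlin reduction $h(T)=h(T_{\sG/\cN})+h\big(T\mid\pi^{-1}\cB_{\sG/\cN}\big)$, followed by the identification of $\sG$ with a skew product over $\sG/\cN$ with fibre $\cN$ via a Borel cross-section (compare the cocycle $\phi(g,\gamma\cN)=T_g(\sigma(\gamma\cN))\sigma(T_g(\gamma)\cN)^{-1}$ and the map $\psi(\gamma\cN,k)=k\sigma(\gamma\cN)$ in the proof of theorem \ref{thm:freeyuz}), and then the identification of the fibre entropy with $h(T_\cN)$. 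Your computation of the fibre map $n\mapsto b_yT_\cN(n)$ and the soft upper bound $h\big(T\mid\pi^{-1}\cB_{\sG/\cN}\big)\le h(T_\cN)$ by approximating the translation cocycle over a finite set are both sound as sketches.

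The substantive issue is the one you flag yourself: the reverse inequality $h\big(T\mid\pi^{-1}\cB_{\sG/\cN}\big)\ge h(T_\cN)$ for an arbitrary separable compact $\sG$ is not established by your argument; you invoke Yuzvinskii's structure theory and its refinements as a black box. As a self-contained proof this is a genuine gap, and it is precisely the gap the literature closes with what this paper calls \emph{rigidity} --- the existence of an increasing sequence of finite partitions $\xi_i$ generating the Borel $\sigma$-algebra with $H(\xi_i\gamma\mid\xi_i)$ bounded uniformly in $i$ and $\gamma$ --- proved for totally disconnected and finite-dimensional connected abelian groups in [Yu65], for compact Lie groups in [Th71], with a correction in [LSW90, lemma B.5]. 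It is worth noting that this is also exactly why the paper's free-group version (theorem \ref{thm:freeyuz}) is stated only for $\sG$ totally disconnected, Lie, or finite-dimensional connected abelian, with the general separable compact case left as a conjecture. Your Lie-group verification via eigenvalues of the differential and your observation that the approximating Lie quotients $\sG/L_k$ cannot in general be chosen $T$-invariant are both correct and locate the real difficulty accurately; but as written the argument is an outline that defers its crux to [Yu65]/[Th71]/[LSW90] rather than a complete proof.
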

This was proven first in [Yu65]. R. K. Thomas [Th71] enhanced this formula to skew-product actions. In [LSW90] it was generalized to actions of $\Z^d$. There are related results in [LSW90, De06, DS07, BM08].  In a very recent preprint [LS09], Lind and Schmidt have extended Yuzvinksii's formula to all algebraic actions of an arbitrary amenable group.

\subsection{Free groups and semigroups}\label{sub:f}
From now on, let $G=\langle s_1,\ldots, s_r\rangle$ denote either a free group or a free semigroup with identity. If $G$ is a group, let $S=\{s_1^{\pm 1},\ldots, s_r^{\pm 1}\}$. In the semigroup case, let $S=\{s_1,\ldots,s_r\}$. Let $|\cdot|:G \to \R$ denote the word metric with respect to $S$.

We will write $G \c^T (X,\cB,\mu)$ to denote that $T:G \to \ed(X,\cB,\mu)$ is a homomorphism from $G$ into the semigroup of measure-preserving transformations of $(X,\cB,\mu)$ which we will always assume is a standard probability space. Measure-preserving means that for all $g\in G$ and $E \in \cB$, $\mu(T_g^{-1}E)=\mu(E)$.  If $\alpha$ is a partition of $X$ and $Q \subset G$ is finite, then $\alpha^Q:= \bigvee_{q\in Q} T_q^{-1} \alpha$. To simplify notation, let $\alpha^n := \alpha^{B(e,n)}$ where $B(e,n) \subset G$ is the ball of radius $n$ centered at the identity element with respect to the word metric. Define
\begin{eqnarray*}
F(T,\alpha)&:=&(1-2r)H(\alpha) + \sum_{i=1}^r H(\alpha \vee T_{s_i}^{-1}\alpha)\\
f(T,\alpha)&:=&\inf_{n>0} F(T,\alpha^n).
\end{eqnarray*}
In [Bo08a], it is proven that if $\alpha$ generates (i.e., the smallest $G$-invariant $\sigma$-algebra containing $\alpha$, denoted $\alpha^G$, equals $\cB$ up to sets of measure zero) and if $\beta$ also generates and $H(\alpha)+H(\beta)<\infty$ then $f(T,\alpha)=f(T,\beta)$. This common number is called the $f$-invariant of the action (denoted $f(T)$). It is a measure-conjugacy invariant. It is our substitute for entropy rate. Unlike the classical case, $f(T)$ is well-defined only if there exists a generating partition $\alpha$ with $H(\alpha)<\infty$. Also $f(T)$ can take negative values.

We will need the following relative versions. If $\cF \subset \cB$ is a sub-$\sigma$-algebra then define $\cF^Q$ and $\cF^n$ similarly to the above and let
\begin{eqnarray*}
F(T,\alpha|\cF)&:=&(1-2r)H(\alpha|\cF) + \sum_{i=1}^r H(\alpha \vee T_{s_i}^{-1}\alpha|\cF \vee T_{s_i}^{-1}\cF)\\
f(T,\alpha|\cF)&:=& \inf_{n>0} F(T,\alpha^n|\cF^n).
\end{eqnarray*}
When $T$ is fixed we will write $f(\alpha|\sF)$ instead of $f(T,\alpha|\sF)$. When it is helpful to emphasize the measure we will write $f(T,\mu,\alpha|\sF)$ instead of $f(T,\alpha|\sF)$.

The next theorem generalizes Abramov-Rohlin's formula.
\begin{thm}\label{thm:freeAR}\label{thm:abramov}
Let $G \c^T (X,\cB,\mu)$. If $\alpha$ and $\beta$ are partitions of $X$ with $H(\alpha) + H(\beta) <\infty$ then
$$f(T,\alpha\vee \beta) = f(T,\alpha) + f(T,\beta|\alpha^G).$$
\end{thm}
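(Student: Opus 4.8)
The plan is to combine a chain rule for the conditional quantity $F(T,\cdot|\cdot)$ with two monotone passages to the limit, the second of which is the main point.

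\emph{Step 1: a chain rule for $F$.} For partitions $\gamma,\delta$ with $H(\gamma)+H(\delta)<\infty$ and any sub-$\sigma$-algebra $\cF$, apply Shannon's chain rule $H(\gamma\vee\delta\,|\,\cG)=H(\gamma\,|\,\cG)+H(\delta\,|\,\gamma\vee\cG)$ to each of the $r+1$ entropies appearing in $F(T,\gamma\vee\delta\,|\,\cF)$; using $(\gamma\vee\delta)\vee T_{s_i}^{-1}(\gamma\vee\delta)=(\gamma\vee T_{s_i}^{-1}\gamma)\vee(\delta\vee T_{s_i}^{-1}\delta)$ and $\gamma\vee T_{s_i}^{-1}\gamma\vee\cF\vee T_{s_i}^{-1}\cF=(\gamma\vee\cF)\vee T_{s_i}^{-1}(\gamma\vee\cF)$, the terms regroup as
$$F(T,\gamma\vee\delta\,|\,\cF)=F(T,\gamma\,|\,\cF)+F(T,\delta\,|\,\gamma\vee\cF).$$
Taking $\cF$ trivial, $\gamma=\alpha^n$, $\delta=\beta^n$, and using $(\alpha\vee\beta)^n=\alpha^n\vee\beta^n$ gives (writing $\alpha^n$ also for the $\sigma$-algebra it generates)
$$F(T,(\alpha\vee\beta)^n)=F(T,\alpha^n)+F(T,\beta^n\,|\,\alpha^n),\qquad n\ge 1 .$$

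\emph{Step 2: let $n\to\infty$.} Recall from [Bo08a] (or the preliminary section of this paper) that $n\mapsto F(T,\gamma^n)$ is non-increasing for every finite-entropy $\gamma$, so $F(T,\gamma^n)\downarrow f(T,\gamma)$; the same argument relativizes to show $n\mapsto F(T,\gamma^n\,|\,\cF^n)$ is non-increasing, so $F(T,\gamma^n\,|\,\cF^n)\downarrow f(T,\gamma\,|\,\cF)$. Letting $\cF$ be the $\sigma$-algebra generated by $\alpha$ (so $\cF^n$ is generated by $\alpha^n$), both sequences on the right of the identity above are non-increasing, hence so is their sum, and passing to the limit termwise gives
$$f(T,\alpha\vee\beta)=f(T,\alpha)+f(T,\beta\,|\,\alpha),$$
where the last conditioning is on the $\sigma$-algebra generated by $\alpha$.

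\emph{Step 3: enlarge the conditioning from $\alpha$ to $\alpha^G$.} It remains to prove $f(T,\beta\,|\,\alpha)=f(T,\beta\,|\,\alpha^G)$. For fixed $m$, the $\sigma$-algebras generated by $\alpha^k$ — and by $\alpha^k\vee T_{s_i}^{-1}\alpha^k$ — increase to $\alpha^G$ as $k\to\infty$, so by martingale convergence of conditional entropy (legitimate because $H(\beta^m)$ and each $H(\beta^m\vee T_{s_i}^{-1}\beta^m)$ are finite) one gets $F(T,\beta^m\,|\,\alpha^k)\to F(T,\beta^m\,|\,\alpha^G)$; since $\alpha^G$ is $G$-invariant the right side is $F(T,\beta^m\,|\,(\alpha^G)^m)$, so $f(T,\beta\,|\,\alpha^G)=\inf_m\lim_k F(T,\beta^m\,|\,\alpha^k)$, while Step 2 gives $f(T,\beta\,|\,\alpha)=\inf_n F(T,\beta^n\,|\,\alpha^n)=\lim_n F(T,\beta^n\,|\,\alpha^n)$. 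Equating these amounts to interchanging the two limits in the array $(m,k)\mapsto F(T,\beta^m\,|\,\alpha^k)$. Via the rearrangement $F(T,\beta^j\,|\,\alpha^k)=F(T,\alpha^k\vee\beta^j)-F(T,\alpha^k)$ of Step 1, both inequalities reduce to a monotonicity in the ball radii: adjoining to the conditioning one more sphere of the $\alpha$-orbit, or to $\beta^j$ one more sphere of the $\beta$-orbit, does not raise the relevant value of $F$, at least when the $\alpha$-ball is at least as large as the $\beta$-ball. This is the natural generalization of the monotonicity $F(T,\gamma^{n+1})\le F(T,\gamma^n)$ of [Bo08a], now with a second partition carried along as a spectator, and should follow by the same computation using the tree structure underlying $F$. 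I expect this sphere-by-sphere monotonicity — equivalently, the interchange of limits in Step 3, i.e.\ the insensitivity of the relative $f$-invariant to enlarging the conditioning $\sigma$-algebra within its $G$-invariant hull — to be the main obstacle; Steps 1 and 2 and the martingale estimate are routine.
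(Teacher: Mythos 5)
The gap is exactly where you flag it: Step 3. Steps 1 and 2 are sound (the chain rule for $F$ follows term by term from $H(\gamma\vee\delta|\cdot)=H(\gamma|\cdot)+H(\delta|\gamma\vee\cdot)$, and the martingale convergence $F(T,\beta^m|\alpha^k)\to F(T,\beta^m|\alpha^G)$ is legitimate because $H(\beta^m)<\infty$). But the ``sphere-by-sphere monotonicity'' you invoke to interchange the limits --- that $k\mapsto F(T,\beta^j|\alpha^k)$ is non-increasing for $k\ge j$ --- is not ``the same computation'' as proposition \ref{prop:splittingmonotone}: that proposition is stated and proved only for a $T(G)$-invariant conditioning $\sigma$-algebra $\sF$, and its proof uses the invariance; here the conditioning $\alpha^k$ is not invariant, and the negative coefficient $1-2r$ blocks any naive claim that enlarging the conditioning decreases $F$. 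Your own rearrangement $F(T,\beta^j|\alpha^k)=F(T,\alpha^k\vee\beta^j)-F(T,\alpha^k)$ exhibits the quantity as a difference of two sequences, each non-increasing in $k$, and such a difference need not be monotone. So the key identity $f(T,\beta|\alpha)=f(T,\beta|\alpha^G)$ is left unproved, and this is precisely the content of the theorem rather than a routine estimate.

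The gap can be closed within your framework, but by discarding the diagonal limit of Step 2 rather than by the monotonicity you propose. Write $f(T,\beta|\alpha^G)=\inf_m F(T,\beta^m|\alpha^G)=\inf_m\lim_k\bigl(F(T,\alpha^k\vee\beta^m)-F(T,\alpha^k)\bigr)$. For fixed $m$, lemma \ref{lem:splittings} (with base $\alpha^k\vee\beta^m$ and coarsening $\alpha$, resp.\ $\beta$) shows that $\alpha^{k+1}\vee\beta^m$ and, for $k\ge m$, $(\alpha\vee\beta)^k$ are splittings of $\alpha^k\vee\beta^m$; hence $k\mapsto F(T,\alpha^k\vee\beta^m)$ is non-increasing and its limit lies between $f(T,\alpha\vee\beta)$ and $F(T,(\alpha\vee\beta)^m)$, so $\inf_m\lim_k F(T,\alpha^k\vee\beta^m)=f(T,\alpha\vee\beta)$ while $\lim_k F(T,\alpha^k)=f(T,\alpha)$, giving the formula. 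Note this is a genuinely different route from the paper's, which never conditions $F$ on a non-invariant $\sigma$-algebra at all: there one first proves $f=f_*$ via Markov approximations and then feeds the classical one-generator Abramov--Rohlin formula into each term $h(T_{s_i},\cdot|\cdot)$ of $F_*$, so the entire difficulty you are wrestling with (finitely many translates of $\alpha$ versus all of $\alpha^G$) is absorbed by the classical relative entropy rate.
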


To illustrate, a simple calculation shows that if $X$ that has exactly $n$ elements and $\mu$ is the uniform probability measure on $X$ then $f(T) = (1-r)\log(n)$. Note this is negative if $n>1$ and $r>1$. The above theorem and standard skew-product arguments now imply:
\begin{cor}\label{cor:finiteto1}
Let $G \c^T (X,\cB,\mu)$ be an ergodic $G$-system. Let $G \c^S (Y,\cC,\nu)$ and suppose there is a $n$-to-1 factor map $\phi:X \to Y$ (i.e., $\phi_*\mu=\nu$, $\phi(T_gx)=S_g\phi(x)$ for a.e. $x$ and $|\phi^{-1}(y)|=n$ for a.e. $y$). Then
$$f(S) = (r-1)\log(n) + f(T)$$
whenever $f(S)$ and $f(T)$ are well-defined.
\end{cor}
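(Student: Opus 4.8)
The plan is to realize $X$ as a ``finite fibre'' extension of $Y$ and then quote Theorem~\ref{thm:abramov}. Using that $f(S)$ is well-defined, first fix a partition $\bar\beta$ of $Y$ with $H(\bar\beta)<\infty$ that generates $\cC$, and set $\beta:=\phi^{-1}(\bar\beta)$. Since $\cC$ is $S$-invariant we have $\beta^G=\phi^{-1}(\cC)$, and since $\phi$ is a factor map the restricted system $G\c^T(X,\phi^{-1}\cC,\mu)$ equipped with $\beta$ is measurably conjugate, via $\phi$, to $G\c^S(Y,\cC,\nu)$ equipped with $\bar\beta$. Computing $F(T,\beta^m)$ only sees this sub-system, so $F(T,\beta^m)=F(S,\bar\beta^m)$ for every $m$ and hence $f(T,\beta)=f(S,\bar\beta)=f(S)$.

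Next I would choose a partition $\alpha=\{A_1,\dots,A_n\}$ of $X$ whose restriction to $\phi^{-1}(y)$ is the partition into singletons for a.e.\ $y$; such an $\alpha$ exists by measurable selection, e.g.\ by ranking the points of each fibre with a Borel linear order on $X$. Then $\alpha\vee\phi^{-1}\cC=\cB$, so $\alpha\vee\beta$ generates $\cB$, and $H(\alpha\vee\beta)\le\log n+H(\bar\beta)<\infty$; since $f(T)$ is well-defined this yields $f(T)=f(T,\alpha\vee\beta)$. The crucial input, and where ergodicity of $T$ is used, is that the conditional measures $\mu_y$ in the disintegration $\mu=\int_Y\mu_y\,d\nu(y)$ are \emph{uniform} on the $n$-point fibres: the sorted list of masses of $\mu_y$ is a $T$-invariant function of $y$, hence a.e.\ constant, and were that constant list not $(1/n,\dots,1/n)$ the set of points that are strictly heaviest in their fibre would be a $T$-invariant set of measure strictly between $0$ and $1$.

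Granting this, the heart of the argument is the identity $f(T,\alpha\mid\phi^{-1}\cC)=(1-r)\log n$. Because $\cC$ is $S$-invariant one has $(\phi^{-1}\cC)^m=\phi^{-1}\cC$ and $\phi^{-1}\cC\vee T_{s_i}^{-1}\phi^{-1}\cC=\phi^{-1}\cC$, so
$$f(T,\alpha\mid\phi^{-1}\cC)=\inf_{m>0}\Big[(1-2r)H(\alpha^m\mid\phi^{-1}\cC)+\sum_{i=1}^r H\big(\alpha^m\vee T_{s_i}^{-1}\alpha^m\mid\phi^{-1}\cC\big)\Big].$$
For any finite $Q\subseteq G$ with $e\in Q$, each $T_q$ restricts to a bijection of $\phi^{-1}(y)$ onto $\phi^{-1}(S_q y)$ carrying $\mu_y$ to $\mu_{S_q y}$, so the map sending $x\in\phi^{-1}(y)$ to the tuple of fibre-ranks of $(T_q x)_{q\in Q}$ is injective (look at $q=e$); hence, conditioned on the base, $\alpha^Q$ has exactly $n$ atoms, each of conditional mass $1/n$, giving $H(\alpha^Q\mid\phi^{-1}\cC)=\log n$. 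Applying this with $Q=B(e,m)$ and with $Q=B(e,m)\cup B(e,m)s_i$ (both of which contain $e$), the bracket equals $(1-2r)\log n+r\log n=(1-r)\log n$ for every $m$, which proves the identity. Finally, Theorem~\ref{thm:abramov} applied with the two partitions interchanged gives $f(T)=f(T,\alpha\vee\beta)=f(T,\beta)+f(T,\alpha\mid\beta^G)=f(S)+(1-r)\log n$, which rearranges to $f(S)=(r-1)\log n+f(T)$.

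I expect the main obstacle to be the passage to this ``skew-product'' picture in the second paragraph: showing that ergodicity forces the fibre measures to be uniform on exactly $n$ points, and selecting the partition $\alpha$ measurably, together with the usual care in the free-semigroup case, where $T_g$ need not be invertible on $X$ (though it is still injective on a.e.\ fibre by the same mass argument) and where one must check that $e$ still lies in the relevant index sets. Once that structure is in place, the remainder is bookkeeping built on Theorem~\ref{thm:abramov} and the single observation that $e$ belongs to every ball $B(e,m)$.
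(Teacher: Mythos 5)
Your proposal is correct and is exactly the ``standard skew-product argument'' the paper invokes without writing out: realize $X$ as an extension of $Y$ with fibres that ergodicity forces to carry uniform $n$-point conditional measures, compute $f(T,\alpha\mid\beta^G)=(1-r)\log n$ from $H(\alpha^Q\mid\phi^{-1}\cC)=\log n$ for every finite $Q\ni e$, and apply Theorem \ref{thm:abramov}; this matches the paper's intended route (cf.\ the preceding remark that a uniform $n$-point system has $f=(1-r)\log n$). The only cosmetic repair needed is in the ergodicity step: if the maximal conditional mass is attained by $k>1$ points the set of ``strictly heaviest'' points is empty, so one should instead take the invariant set of points attaining the maximal mass (of measure $k m_1$) and, in the degenerate case $km_1=1$ with $k<n$, the invariant null set of mass-zero fibre points, to conclude uniformity under the measure-theoretic reading of ``$n$-to-$1$''.
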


The next result generalizes Yuzvinskii's addition formula.

\begin{thm}\label{thm:freeyuz}
Let $G=\langle s_1,\ldots,s_r\rangle$ be a rank $r$ free group or semigroup. Let $\sG$ be a separable compact group which is either totally disconnected, a Lie group, or a finite-dimensional connected abelian group. Let $T_{\sG}:G \to \ed(\sG)$ be a homomorphism and let $\cN<\sG$ be a closed normal $G$-invariant subgroup. Let $T_{\cN}:G \to \ed(\cN)$ and $T_{\sG/\cN}:G \to \ed(\sG/\cN)$ be the induced homomorphisms. Then
$$f(T_{\sG})=f(T_{\sG/\cN}) + f(T_{\cN})$$
whenever $f(T_\sG), f(T_{\sG/\cN})$ and $f(T_{\cN})$ are well-defined. The numbers $f(T_\sG), f(T_{\sG/\cN})$ and $f(T_{\cN})$ are computed with respect to Haar probability measure on $\sG, \sG/\cN$ and $\cN$ respectively.
\end{thm}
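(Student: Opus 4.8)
The plan is to follow the architecture of Yuzvinskii's original proof of Theorem~\ref{thm:yuz}: use the relative $f$-invariant together with the free Abramov-Rohlin formula to reduce to a single extension, then dismantle $\cN$ with the structure theory of compact groups. The quotient $\pi\colon\sG\to\sG/\cN$ is a $G$-equivariant factor map, and a Borel cross-section realizes $(\sG,\textrm{Haar})$ as a measurable skew product of $(\cN,\textrm{Haar})$ over $(\sG/\cN,\textrm{Haar})$ on which $G$ acts fiberwise by $T_{\cN}$ twisted by an $\cN$-valued cocycle. Pick a finite generating partition $\alpha$ for $\sG/\cN$ and a finite generating partition $\beta$ for $\sG$ that refines $\pi^{-1}\alpha$; these exist since $f(T_{\sG/\cN})$ and $f(T_{\sG})$ are assumed well defined. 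Because the entropies occurring in $F$ only see the factor, $f(T_{\sG},\pi^{-1}\alpha)=f(T_{\sG/\cN})$, so Theorem~\ref{thm:freeAR} gives $f(T_{\sG})=f(T_{\sG/\cN})+f(T_{\sG},\beta\mid(\pi^{-1}\alpha)^{G})$. Since $(\pi^{-1}\alpha)^{G}=\pi^{-1}\cB_{\sG/\cN}$ is $G$-invariant, this relative invariant is independent of $\beta$, equals $f(T_{\sG})-f(T_{\sG/\cN})$, and --- read through the skew product --- is an integral over $\sG/\cN$ of $F$-type quantities evaluated on the fibers. Thus the theorem is equivalent to the assertion that this fiberwise relative $f$-invariant equals $f(T_{\cN})$, and the content is to control the cocycle twist and the fiber dynamics.

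I would establish this by a structure-theoretic reduction. The connected component $\cN^{0}$ is characteristic in $\cN$, hence $G$-invariant and normal in $\sG$, and $\cN/\cN^{0}$ is profinite --- finite when $\sG$ is a Lie group or a finite-dimensional connected abelian group. Using a subnormal $G$-invariant series through $\cN^{0}$ (and, inside a connected compact Lie fiber, through the central torus $Z(\cdot)^{0}$) together with the structure of connected compact Lie groups, the theorem reduces to the cases where $\cN$ is: finite; a finite-dimensional connected compact abelian group (which one may further reduce to a torus); a connected compact semisimple Lie group; or, only when $\sG$ is totally disconnected, profinite. The finite case is Corollary~\ref{cor:finiteto1} applied to $\pi$, together with the value $f(T_{\cN})=(1-r)\log|\cN|$ recorded after Theorem~\ref{thm:freeAR}. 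In the profinite case a finite generator of $\cN$ may be refined to the cosets of an open subgroup, and a continuity estimate for $F$ along the associated tower of finite quotient systems shows that both $f(T_{\cN})$ and the relative invariant over the base are limits of their finite counterparts. In the semisimple case, after restricting $G$ to a finite-index subgroup the action on $\cN$ is by inner automorphisms, hence isometric for a bi-invariant metric; one then argues that an isometric group extension contributes the same relative $f$-invariant as $f(T_{\cN})$, exploiting the rigidity of distal extensions.

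The heart of the proof, and the step I expect to be the main obstacle, is the torus case. Dualizing, $\widehat{\cN}$ is a torsion-free $\Z G$-module of finite rank, and the extension data say that the fiber action is $G$ acting $\Z$-linearly; so the fiber transformation attached to the generator $s_{i}$ is an integer matrix $A_{i}$ followed by a translation. What is needed is (i) an explicit value for the relative $f$-invariant of such a toral extension --- the $f$-invariant analogue of Yuzvinskii's lemma --- which I expect to equal $\sum_{i=1}^{r}\sum_{j}\log^{+}|\lambda_{j}(A_{i})|$, the sum over the generators of the classical entropies of the linear parts; and (ii) the fact that this value is unchanged by the translation cocycle, the analogue of the classical statement that an affine map of a compact group has the entropy of its linear part. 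Granting (i) and (ii), the same formula applied to $\cN$ itself (trivial cocycle) yields $f(T_{\cN})=\sum_{i}\sum_{j}\log^{+}|\lambda_{j}(A_{i})|$, so the relative invariant equals $f(T_{\cN})$ and the torus case follows. Proving (i) calls for a careful asymptotic analysis of $F(T,\beta^{n}\mid(\pi^{-1}\alpha)^{G})$ for toral generating partitions $\beta$ as $n\to\infty$; one natural route is to realize a hyperbolic toral action as a Markov chain over $G$ and to evaluate $f$ from the transition-matrix formula developed in this paper, reducing (i) and (ii) to matrix identities. This asymptotic computation is where I expect the real difficulty to lie.

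Finally, a few routine matters must be settled: reduce to the ergodic case via the ergodic decomposition, over which $f$ is affine, so that Corollary~\ref{cor:finiteto1} applies; verify that the $f$-invariants of all intermediate subgroups and quotients appearing in the reduction are well defined, using that a factor of a system with a finite generating partition again has one; and invoke additivity of the $f$-invariant on direct products, which enters implicitly and is available from [Bo08a].
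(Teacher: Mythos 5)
Your opening moves match the paper: realize $T_{\sG}$ as a skew product $T_{\sG/\cN}\times_\phi T_{\cN}$ via a Borel cross-section, and observe that by Theorem \ref{thm:freeAR} the problem reduces to showing that the relative $f$-invariant of the extension equals $f(T_{\cN})$. But from there the proposal has a genuine gap at its core. The entire structure-theoretic programme funnels into the toral case, and there you only \emph{conjecture} the key lemma (your item (i)), explicitly deferring "the real difficulty." Worse, the conjectured value $\sum_{i}\sum_{j}\log^{+}|\lambda_{j}(A_{i})|$ is almost certainly wrong: by the identity $f=f_*$ proved in \S\ref{sec:f=f_*}, one has $f(T_{\cN})=\lim_n\big[(1-r)H(\beta^{n})+\sum_{i}h(T_{s_i},\beta^{n})\big]$, and for a free group of rank $r\ge 2$ acting on a torus by hyperbolic automorphisms the term $H(\beta^{n})$ grows without bound (linearly in $n$) while each $h(T_{s_i},\beta^{n})$ stays bounded by $h(T_{s_i})<\infty$; so $f(T_{\cN})=-\infty$ rather than the finite positive sum of classical entropies you predict. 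Your formula ignores the $(1-r)H$ term, whose delicate cancellation against the $h$-terms is exactly what makes $f$ computations nontrivial (compare the Bernoulli case, where the cancellation is exact). In addition, the semisimple case ("isometric extensions contribute $f(T_{\cN})$, by rigidity of distal extensions") is asserted without an argument, and the subnormal-series reduction needs care to avoid circularity, since it invokes the very addition formula being proved for intermediate extensions.

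The paper's actual proof sidesteps all of this structure theory. Having established $f=f_*$, i.e.\ $f(T,\alpha)=\inf_n\big[(1-r)H(\alpha^n)+\sum_i h(T_{s_i},\alpha^n)\big]$, it expresses the $f$-invariant entirely in terms of \emph{classical} one-generator entropies. It then imports R.~K.~Thomas's classical theorem -- $h(T\times_\phi U,\alpha\times\beta)=h(T,\alpha)+h(U,\beta)$ for skew products with a \emph{rigid} fiber group, valid for arbitrary (not necessarily generating) partitions -- and applies it to each generator $s_i$ separately; the $H$-terms add because product partitions are independent under the product measure, so $F_*(T\times_\phi U,\alpha_n\times\beta_n)=F_*(T,\alpha_n)+F_*(U,\beta_n)$. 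Proposition \ref{prop:partitionapproximation} (computing $f$ as $\lim_n F_*(\beta_n)$ along partitions converging to the full $\sigma$-algebra, proved via Markov approximations) then handles the fact that $\alpha_n\times\beta_n$ need not be generating. The hypotheses on $\sG$ enter only through the citation that totally disconnected, Lie, and finite-dimensional connected abelian groups are rigid -- precisely the hard analytic content of Yuzvinskii and Thomas that your proposal would have to re-derive from scratch in the free-group setting. If you want to salvage your approach, the essential missing ingredient is a correct and \emph{proved} evaluation of the relative $f$-invariant of a toral extension, including the contribution of the $(1-r)H$ term.
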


I conjecture that the above result holds for all separable compact groups $\sG$. In [El99] it was proven that there is no invariant for nonabelian free group actions (and many other nonamenable groups) that satisfies a Yuzvinskii-type formula under some rather general assumptions on the invariant. But the $f$-invariant does not satisfy these because it can take negative values.

To illustrate, let us recall the following example from [OW87]. Let $G=\langle s_1,s_2\rangle$ be the rank 2 free group. Let $\sG=(\Z/2\Z)^G$ be the set of all functions from $G \to \Z/2\Z$. It is a group under pointwise addition. It can be considered as the product of $G$ copies of $\Z/2\Z$. By Tychonoff's theorem, it is compact. Let $\cN < \cG$ the subgroup of constant functions. So $|\cN|=2$.  For $g \in G$, define $T_g:\sG \to \sG$ by $T_g x(f)=x(g^{-1}f)$. This action preserves Haar measure on $\sG$ and leaves $\cN$ invariant. 

In [OW87], it is pointed out that $\sG/\cN$ is isomorphic to $\sG \times \sG \cong (\Z/2\Z \times \Z/2\Z)^G$. Indeed the factor map $\phi: (\Z/2\Z)^G \to (\Z/2\Z \times \Z/2\Z)^G$ defined by
$$\phi(x)(g) = \big( x(g) + x(s_1g), x(g) + x(s_2g) \big)$$
defines an isomorphism $\sG/\cN \cong (\Z/2\Z \times \Z/2\Z)^G \cong \sG \times \sG$. So, the above theorem implies that
$$f(T_{\sG}) = f(T_{\cN}) + f( T_{\sG \times \sG}).$$
This is easily verified. $T_{\sG}$ and $T_{\sG \times \sG}$ are both Bernoulli shift actions. From one of the main results of [Bo08a], it follows that $f(T_\sG)=\log(2)$ and $f(T_{\sG \times \sG})=\log(4)$. The action of $G$ on $\cN$ is trivial and it is easy to verify that $f(T_{\cN})=-\log(2)$ as required. Alternatively, since the above factor map is 2-1, this formula can be derived from corollary \ref{cor:finiteto1}.

\subsection{An alternative formulation of the $f$-invariant}

We will prove the following formula for the $f$-invariant that helps enable the transfer of results from the classical case to the case of free groups. To explain, let $G \c^T (X,\cB,\mu)$, $\sF\subset \cB$ be a $T$-invariant sub-$\sigma$-algebra and $\alpha$ be a partition of $X$. Define
\begin{eqnarray*}
F_*(T,\alpha|\sF)&:=& (1-r)H(\alpha|\sF) + \sum_{i=1}^r h(T_{s_i}, \alpha|\sF).\\
f_*(T,\alpha|\sF) &:=& \inf_{n>0} F_*(T,\alpha^n|\sF).
\end{eqnarray*}
In \S \ref{sec:f=f_*} we prove that $f_*(T,\alpha|\sF)=f(T,\alpha|\sF)$. 

\subsection{Organization}

\S \ref{sec:notation} explains notation used throughout the paper. \S \ref{sec:standard} is a review of classical entropy theory. \S \ref{sec:spaceofpartitions} is a study of the space of partitions of $X$. \S \ref{sec:alternative} introduces $f_*$ and proves that $f$ and $f_*$ are measure-conjugacy invariants (using the main theorem of \S \ref{sec:spaceofpartitions}). \S \ref{sec:markovprocesses} introduces Markov processes and proves that $F(\alpha)=f(\alpha)$ for such processes. \S \ref{sec:markovchains} develops a constructive approach to Markov processes via transition matrices and symbolic dynamics. \S \ref{sec:examples} presents three examples of Markov processes. \S \ref{sec:f=f_*} proves that $f=f_*$ using Markov approximations to an arbitrary system. This is then used to give a short proof of theorem \ref{thm:freeAR} in \S \ref{sec:abramov}. \S \ref{sec:characterization} proves that if a process $(T,X,\mu,\alpha)$ satisfies $F(\alpha)=f(\alpha)$ then it must be Markov. \S \ref{sec:limits} proves more approximation results that are used in \S \ref{sec:yuz} to prove theorem \ref{thm:freeyuz}.

{\bf Acknowledgements.}
I would like to thank Russ Lyons for suggesting that I think about the isomorphism problem for Bernoulli shifts over a nonabelian free group and for many useful conversations along the way. I'd also like to thank Benjy Weiss for asking whether the infinite entropy Bernoulli shift over a nonabelian free group could be finitely generated. That question is answered in [Bo08b] and a different proof is provided in \S \ref{sec:limits}.

\section{Notation}\label{sec:notation}
In general, $G:=\langle s_1,\ldots, s_r\rangle$ denotes either a free group or free semigroup with $1$.  If $G$ is a group, let $S=\{s_1^{\pm 1},\ldots, s_r^{\pm 1}\}$. In the semigroup case, let $S=\{s_1,\ldots,s_r\}$. 

We will write $G \c^T (X,\cB,\mu)$ to denote that $T:G \to \ed(X,\cB,\mu)$ is a homomorphism from $G$ into the semigroup of measure-preserving transformations of $(X,\cB,\mu)$ which we will always assume is a standard probability space. Measure-preserving means that for all $g\in G$ and $E \in \cB$, $\mu(T_g^{-1}E)=\mu(E)$. When convenient we will ignore the $\sigma$-algebra by writing $G \c^T (X,\mu)$ instead. The triple $(T,X,\mu)$ is a called a {\bf $G$-system} or an {\bf action of $G$}.

We use $\alpha,\beta$ to denote partitions of $X$ into at most countably many measurable subsets.

\section{Review of classical entropy theory}\label{sec:standard}

Fix a probability space $(X,\cB,\mu)$.
\begin{defn}
A {\bf partition} $\alpha=\{A_1, A_2,\ldots\}$ is a pairwise disjoint collection of measurable subsets $A_i$ of $X$ such that $\cup_i A_i = X$. The sets $A_i$ are called the {\bf partition elements} of $\alpha$. Alternatively, they are called the {\bf atoms} of $\alpha$.  Unless stated otherwise, all partitions in this paper are either finite or countable infinite.


\end{defn}

\begin{defn}
If $\alpha$ and $\beta$ are partitions of $X$  then the {\bf join}  of $\alpha$ and $\beta$ is the common refinement partition $\alpha \vee \beta=\{A \cap B\,|\,A \in \alpha, ~B\in \beta\} $. By abuse of notation, we will sometimes identify a join with the $\sigma$-algebra that it generates. Thus if $\alpha_1,\alpha_2,\ldots$ is a sequence of partitions then $\bigvee_{i=1}^\infty \alpha_i$ is identified with the smallest $\sigma$-algebra of $X$ that contains every atom of $\alpha_i$ for all $i$.
\end{defn}

\begin{defn}
The {\bf information function} $I(\alpha):X \to \R$ corresponding to a partition $\alpha$ is defined by
$$I(\alpha)(x) = -\log(\mu(A_x))$$
where $A_x$ is the atom of $\alpha$ containing $x$.
\end{defn}

\begin{defn}
The {\bf entropy} $H(\alpha)$ of $\alpha$ is defined by
$$H(\alpha) = - \sum_{A\in \alpha} \mu(A)\log(\mu(A)) = \int_{x\in X} I(\alpha)(x) \, d\mu(x). $$
By convention $0\log(0)=0$.
\end{defn}

\begin{defn}
Let $G$ be a group (or semigroup with $1$) acting on $(X,\cB,\mu)$. Let $\alpha$ be a partition. Let $\alpha^G$ be the smallest $G$-invariant $\sigma$-algebra containing the atoms of $\alpha$. Then $\alpha$ is {\bf generating} (with respect to the given action of $G$) if for every measurable set $A \subset X$ there exists a set $A' \in \alpha^G$ such that $\mu(A \Delta A')=0$.
\end{defn}

\begin{defn}
Let $T:X \to X$ be a measure-preserving transformation. The {\bf mean entropy} of a partition $\alpha$ of $X$ is
$$h(T,\alpha):=\lim_{n \to \infty} \frac{H( \bigvee_{i=0}^n T^{-i} \alpha)}{n+1} = \lim_{n\to\infty}  H(T^{-n-1}\alpha | \bigvee_{i=0}^n T^{-i}\alpha).$$
A. N. Kolmogorov proved [Ko58, Ko59] that if $\alpha$ and $\beta$ are finite-entropy generating partitions then $h(T,\alpha)=h(T,\beta)$. Y. Sinai proved [Si59] that if $\alpha$ is any finite-entropy partition and $\beta$ is generating partition then $h(T,\alpha) \le h(T,\beta)$. So the {\bf entropy} of the system is defined by $h(T):=\sup_{\alpha} h(T,\alpha)$ where the sup is over all finite-entropy partitions $\alpha$. This defines an isomorphism invariant of the system $(T,X,\mu)$.
\end{defn}

\begin{defn}\label{defn:conditional}
Let $\sF$ be a $\sigma$-algebra contained in the $\sigma$-algebra of all measurable subsets of $X$. Given a partition $\alpha$, define the {\bf conditional information function} $I(\alpha|\sF):X \to \R$ by
$$I(\alpha|\sF)(x) = -\log\big(\mu(A_x|\sF)(x)\big)$$
where $A_x$ is the atom of $\alpha$ containing $x$. Here, if $A \subset X$ is measurable then $\mu(A|\sF):X \to \R$ is the conditional expectation of $\chi_{A}$, the characteristic function of $A$, with respect to the $\sigma$-algebra $\sF$. The {\bf conditional entropy of $\alpha$ with respect to $\sF$} is defined by
$$H(\alpha|\sF) = \int_X I(\alpha | \sF)(x) \, d\mu(x).$$

If $\beta$ is a partition then, by abuse of notation, we can identify $\beta$ with the $\sigma$-algebra equal to the set of all unions of partition elements of $\beta$. Through this identification, $I(\alpha|\beta)$ and $H(\alpha|\beta)$ are well-defined.
\end{defn}

\begin{defn}
Let $T:X \to X$ be a measure-preserving transformation. If $\sF\subset \cB$ is a $T$-invariant sub-$\sigma$-algebra then the entropy rate of $\alpha$ conditioned on $\sF$ is
$$h(T,\alpha|\sF):=\lim_{n\to\infty} \frac{1}{n+1} H\Big( \bigvee_{i=0}^n T^{-i} \alpha|\sF\Big)=\lim_{n\to\infty} H\Big(T_s^{-n-1}\alpha |\sF \vee \bigvee_{i=0}^n T_s^{-i}\alpha\Big).$$
\end{defn}

\begin{lem}\label{lem:relative}
For any two partitions $\alpha, \beta$ and for any two $\sigma$-algebras $\sF_1, \sF_2$ with $\sF_1 \subset \sF_2$,
$$H(\alpha \vee \beta) = H(\alpha) + H(\beta|\alpha),$$
$$H(\alpha  | \sF_2) \le H(\alpha  | \sF_1)$$
with equality if and only if $\mu(A |\sF_2) = \mu(A |\sF_1)$ a.e. for every $A \in \alpha$. In particular $H(\alpha|\beta) \le H(\alpha)$ and equality occurs iff $\alpha$ and $\beta$ are {\bf independent} (i.e., $\forall A \in \alpha, B \in \beta, \mu(A\cap B)=\mu(A)\mu(B)$).
\end{lem}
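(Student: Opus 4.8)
The plan is to establish the three assertions in turn, each via a one-line identity or a convexity estimate; throughout, all quantities are interpreted in $[0,\infty]$ with the convention $0\log 0 = 0$, so no finiteness hypotheses are needed and the statements are vacuous whenever a side is infinite.

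\emph{Chain rule.} First I would identify $\alpha$ with the $\sigma$-algebra it generates and note that for $\mu$-a.e.\ $x$ — namely on the full-measure set where the atom $A_x\in\alpha$ containing $x$ has positive measure — the $\alpha$-measurable function $\mu(B|\alpha)$ takes the value $\mu(A_x\cap B)/\mu(A_x)$ at $x$. Writing $A_x,B_x$ for the atoms of $\alpha,\beta$ through $x$, this gives $I(\alpha\vee\beta)(x) = -\log\mu(A_x\cap B_x) = -\log\mu(A_x) + \big({-}\log(\mu(A_x\cap B_x)/\mu(A_x))\big) = I(\alpha)(x) + I(\beta|\alpha)(x)$ pointwise a.e., and integrating gives $H(\alpha\vee\beta) = H(\alpha) + H(\beta|\alpha)$.

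\emph{Monotonicity in the $\sigma$-algebra.} Since $\log\mu(A|\sF)$ is $\sF$-measurable and $\E[\chi_A|\sF] = \mu(A|\sF)$, one has $\int_A \log\mu(A|\sF)\,d\mu = \int_X \mu(A|\sF)\log\mu(A|\sF)\,d\mu$, hence $H(\alpha|\sF) = \sum_{A\in\alpha}\int_X \phi\big(\mu(A|\sF)\big)\,d\mu$, where $\phi(t) = -t\log t$ is strictly concave and \emph{nonnegative} on $[0,1]$ (this nonnegativity, together with boundedness of $\phi$ on $[0,1]$, makes every integral and sum well-defined). Because $\sF_1\subset\sF_2$, the tower property gives $\E[\mu(A|\sF_2)\mid\sF_1] = \mu(A|\sF_1)$, and the conditional Jensen inequality for the concave $\phi$ yields $\E\big[\phi(\mu(A|\sF_2))\mid\sF_1\big]\le\phi\big(\mu(A|\sF_1)\big)$ a.e.; integrating and summing over $A\in\alpha$ gives $H(\alpha|\sF_2)\le H(\alpha|\sF_1)$. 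For the equality clause, note the difference $H(\alpha|\sF_1)-H(\alpha|\sF_2)$ is a sum over $A\in\alpha$ of the nonnegative quantities $\int_X\big(\phi(\mu(A|\sF_1)) - \phi(\mu(A|\sF_2))\big)\,d\mu$, so it vanishes iff every term does; and for a fixed $A$, strict concavity of $\phi$ forces the equality case of conditional Jensen to occur only when $\mu(A|\sF_2)$ is a.e.\ $\sF_1$-measurable, i.e.\ $\mu(A|\sF_2) = \E[\mu(A|\sF_2)\mid\sF_1] = \mu(A|\sF_1)$ a.e. The converse implication is immediate from the formula for $H(\alpha|\sF)$.

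\emph{Independence, and the main obstacle.} Applying the previous item with $\sF_1 = \{X,\emptyset\}$ and $\sF_2 = \beta$ gives $H(\alpha|\beta)\le H(\alpha)$, with equality iff $\mu(A|\beta) = \mu(A)$ a.e.\ for every $A\in\alpha$; evaluating $\mu(A|\beta)$ on each positive-measure atom $B\in\beta$, where it equals $\mu(A\cap B)/\mu(B)$, this reads $\mu(A\cap B) = \mu(A)\mu(B)$ for all $A\in\alpha$, $B\in\beta$ — independence. I expect the only point requiring real care to be the rigidity statement inside the monotonicity clause, namely upgrading equality in conditional Jensen to $\sF_1$-measurability of $\mu(A|\sF_2)$. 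The clean way: fix $A$, work on the $\sF_1$-measurable set where $\mu(A|\sF_1)\in(0,1)$ (on its complement $\mu(A|\sF_2)=\mu(A|\sF_1)\in\{0,1\}$ automatically), and subtract from $\phi$ the $\sF_1$-measurable affine function $L$ tangent to $\phi$ at $\mu(A|\sF_1)$; then $\phi-L\le 0$, while $\E\big[\phi(\mu(A|\sF_2))-L(\mu(A|\sF_2))\mid\sF_1\big]=\phi(\mu(A|\sF_1))-L(\mu(A|\sF_1))=0$ using the equality hypothesis and $\E[\mu(A|\sF_2)\mid\sF_1]=\mu(A|\sF_1)$, so the nonpositive integrand is $0$ a.e., and strict concavity of $\phi$ forces $\mu(A|\sF_2)=\mu(A|\sF_1)$ a.e.
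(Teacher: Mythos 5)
Your proof is correct. The paper does not actually prove this lemma --- it simply cites [Gl03, Proposition 14.16], so there is no argument in the text to compare yours against; what you have written is the standard self-contained proof that such references contain. Your chain rule via the pointwise identity $I(\alpha\vee\beta)=I(\alpha)+I(\beta|\alpha)$, the rewriting $H(\alpha|\sF)=\sum_{A}\int\phi(\mu(A|\sF))\,d\mu$ with $\phi(t)=-t\log t$, conditional Jensen with the tower property, and the tangent-line argument for the rigidity clause (including the separate treatment of the set where $\mu(A|\sF_1)\in\{0,1\}$) are all sound, and the specialization to $\sF_1=\{X,\emptyset\}$, $\sF_2=\beta$ correctly yields the independence characterization. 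One small caveat worth noting: when $H(\alpha|\sF_1)=H(\alpha|\sF_2)=+\infty$ the ``equality implies $\mu(A|\sF_2)=\mu(A|\sF_1)$'' clause can genuinely fail (the difference is no longer a convergent sum of nonnegative terms), so your opening remark that infinite cases are ``vacuous'' slightly overstates things; but this is a defect of the lemma as stated rather than of your argument, and the paper only ever applies the lemma to finite-entropy partitions, where your proof is complete.
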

\begin{proof}
This is well-known. For example, see [Gl03, Proposition 14.16, page 255].
\end{proof}



\section{The space of partitions}\label{sec:spaceofpartitions}
Let $G \c^T (X,\cB,\mu)$. Let $\cP$ be the set of all partitions $\alpha$ of $X$ such that $H(\alpha)<\infty$. We identify partitions if they agree up to measure zero. The main theorem below is needed to prove that the $f$-invariant is a measure-conjugacy invariant (which is concluded in \S \ref{sec:alternative}). The splittings concept introduced below will be useful in our study of Markov processes.

\begin{defn}[Rohlin distance]\label{defn:rohlin}
Define $d:\sP \times \sP \to \R$ by
$$d(\alpha,\beta) = H(\alpha|\beta) + H(\beta|\alpha) = 2H(\alpha \vee \beta) - H(\alpha) - H(\beta).$$
By [Pa69, theorem 5.22, page 62] this defines a distance function on $\cP$. The action of $G$ on $\sP$ is isometric. I.e., if $g \in G$, $\alpha, \beta \in \sP$ then $d(T_g^{-1}\alpha, T_g^{-1}\beta) = d(\alpha,\beta)$.
\end{defn}

\begin{defn}
Let $\alpha$ and $\beta$ be partitions. If, for every atom $A \in \alpha$ there exists an atom $B \in \beta$ such that $\mu(A-B)=0$ (i.e., $A\subset B$ up to a measure zero set) then we say $\alpha$ {\bf refines} $\beta$. Equivalently, $\beta$ is a {\bf coarsening} of $\alpha$. This is denoted by $\beta \le \alpha$.
\end{defn}

\begin{defn}
If $\alpha$ is a partition of $X$ and $Q \subset G$ is finite then let $\alpha^Q = \bigvee_{q\in Q} T_q^{-1}\alpha$. Two partitions $\alpha, \beta \in \cP$ are {\bf equivalent} if there exists finite sets $Q,P \subset G$ such that $\alpha \le \beta^P$ and $\beta \le \alpha^Q$.
\end{defn}

\begin{thm}\label{thm:dense}
If $\alpha, \beta \in \sP$ are generating partitions and $\epsilon>0$ then there exists a $\gamma \in \cP$ that is equivalent to $\alpha$ such that $d(\gamma,\beta)<\epsilon$. In other words, the equivalence class of $\alpha$ is dense in the space of all generating partitions. 
\end{thm}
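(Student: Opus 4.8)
The plan is to start from the fact that both $\alpha$ and $\beta$ generate, which means that $\beta$ is measurable with respect to the $G$-invariant $\sigma$-algebra $\alpha^G = \bigvee_{g\in G} T_g^{-1}\alpha$. Since $H(\beta)<\infty$, a standard approximation lemma from classical entropy theory (the martingale/increasing $\sigma$-algebras argument, see [Gl03] or [Pa69]) gives, for any $\epsilon>0$, a finite set $Q\subset G$ and a partition $\beta'$ that is $\alpha^Q$-measurable — i.e.\ $\beta' \le \alpha^Q$ — with $d(\beta,\beta')<\epsilon/2$. This handles ``half'' of the equivalence: we will eventually want $\gamma$ with $\gamma \le \beta^P$ and $\beta \le \gamma^{Q'}$, and our candidate will be built near $\beta'$.

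The difficulty is that $\beta'$, while close to $\beta$ and refined by $\alpha^Q$, need not be \emph{equivalent} to $\alpha$: there is no reason $\alpha$ should be recoverable from finitely many translates of $\beta'$. The fix is to adjoin a small piece of $\alpha$ to $\beta'$. Since $\alpha$ generates and has finite entropy, the same approximation lemma lets me pick a further finite set and a coarsening, but more to the point I should form $\gamma := \beta' \vee \alpha_\delta$ where $\alpha_\delta$ is a partition that agrees with $\alpha$ except on a set of measure $\delta$ — concretely, merge all but finitely many atoms of $\alpha$ into one, and then split off an event of measure $\delta$ so as to keep $\gamma$ close to $\beta$. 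Because $H(\cdot)$ and hence $d(\cdot,\cdot)$ are continuous for the relevant limits, choosing $\delta$ small keeps $d(\gamma,\beta')$, and therefore $d(\gamma,\beta)$, below $\epsilon$. The key point is that by incorporating (a good approximation to) $\alpha$ into $\gamma$ while only perturbing $\beta$ slightly, we get both $\gamma \le \beta'^{P} \vee \alpha^{Q'} $ controlled by $\alpha$-translates and, crucially, $\alpha \le \gamma^{Q''}$ up to measure zero for a finite $Q''$ — so $\gamma$ is equivalent to $\alpha$. One must check the equivalence relation is genuinely symmetric here: $\gamma \le \alpha^{Q}$ because both $\beta'$ and the $\alpha_\delta$-part are refined by finitely many $\alpha$-translates, and $\alpha \le \gamma^{Q''}$ because $\gamma^G$ contains $\alpha_\delta^G$, which generates once $\delta$ is chosen generically (or: use that $\alpha_\delta$ differs from $\alpha$ on a set of measure $\delta$ and $\alpha$ itself generates, together with a limiting argument letting $\delta\to 0$ along the construction).

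The steps, in order: (1) use generation of $\alpha$ plus the increasing-$\sigma$-algebra approximation to get $\beta'\le\alpha^Q$, finite $Q$, with $d(\beta,\beta')<\epsilon/2$; (2) use generation of $\alpha$ again (and $H(\alpha)<\infty$) to build a truncation/perturbation $\alpha_\delta$ of $\alpha$ whose atoms, except for a set of measure $\delta$, coincide with atoms of $\alpha$, so that $H(\alpha_\delta)<\infty$ and $\alpha_\delta$ still generates for suitable $\delta$; (3) set $\gamma = \beta' \vee \alpha_\delta$ and use Lemma \ref{lem:relative} together with continuity of entropy to bound $d(\gamma,\beta) \le d(\gamma,\beta') + d(\beta',\beta) \le \delta' + \epsilon/2 < \epsilon$ for $\delta$ small; (4) verify $\gamma$ is equivalent to $\alpha$: one direction since $\gamma\le\alpha^{Q\cup Q_\delta}$, the other since $\alpha$ is $\gamma^G$-measurable and the approximation can be upgraded to a finite set $Q''$ with $\alpha\le\gamma^{Q''}$. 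The main obstacle is step (4), specifically ensuring $\alpha \le \gamma^{Q''}$ for a \emph{finite} $Q''$ — measurability with respect to the infinite join $\gamma^G$ is automatic, but promoting it to a finite sub-join requires that $\gamma$ itself already ``sees'' a generating sub-partition after finitely many translates, which is exactly why we must build a faithful finite-entropy copy of (almost all of) $\alpha$ into $\gamma$ rather than just something $\alpha^G$-measurable; balancing this against keeping $d(\gamma,\beta)$ small is the crux.
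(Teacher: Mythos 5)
First, a point of reference: the paper does not actually prove Theorem \ref{thm:dense} --- it defers entirely to [Bo08b] --- so there is no in-paper argument to measure you against. Judged on its own terms, your proposal has a genuine gap at its central step. You set $\gamma=\beta'\vee\alpha_\delta$ where $\alpha_\delta$ is a partition ``agreeing with $\alpha$ except on a set of measure $\delta$,'' i.e.\ a faithful near-copy of $\alpha$ with $d(\alpha,\alpha_\delta)$ small. But then
$$d(\gamma,\beta)\ \ge\ H(\gamma|\beta)\ \ge\ H(\alpha_\delta|\beta)\ \ge\ H(\alpha|\beta)-H(\alpha|\alpha_\delta),$$
and $H(\alpha|\beta)$ need not be small: take a Bernoulli shift with canonical generating partition $\alpha$ and $\beta=T_s^{-1}\alpha$; both generate, yet $H(\alpha|\beta)=H(\alpha)$. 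So no choice of $\delta$ makes $d(\gamma,\beta)<\epsilon$. Moreover, even setting that aside, the equivalence requires $\alpha\le\gamma^{Q''}$ \emph{exactly} (mod null sets), and a copy of $\alpha$ that is faithful only off a set of measure $\delta>0$ never delivers this: you retain $H(\alpha|\gamma^{Q''})>0$ for every finite $Q''$, and you cannot let $\delta\to 0$ without colliding with the first problem. You correctly identify this tension as ``the crux,'' but the construction you propose is precisely the one that cannot resolve it.

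The missing idea is to use the hypothesis that $\beta$ generates --- which your argument never actually invokes --- so that the information needed to recover $\alpha$ exactly can be adjoined at arbitrarily small entropy cost. Schematically: since $\alpha\le\beta^G$, fix a finite $P$ with $H(\alpha|\beta^P)<\eta$; then choose $\beta'\le\alpha^{Q_1}$ with $H(\beta|\beta')$ so small, \emph{depending on} $|P|$, that $H(\alpha|(\beta')^P)<2\eta$; then build a partition $\rho$ with $\alpha\le(\beta')^P\vee\rho$ holding exactly and $H(\rho)$ small (e.g.\ the ``ranking'' partition: inside each atom of $(\beta')^P$ list the traces of the atoms of $\alpha$ in decreasing order of measure and let $\rho$ record the rank; one needs the standard lemma that $H(\rho)\to 0$ as $H(\alpha|(\beta')^P)\to 0$). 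Setting $\gamma=\beta'\vee\rho$ gives $\gamma\le\alpha^{Q}$ for a finite $Q$ (both $\beta'$ and $\rho$ are refined by finitely many $\alpha$-translates), $\alpha\le\gamma^{P\cup\{e\}}$ exactly, and $d(\gamma,\beta)\le d(\beta,\beta')+H(\rho)<\epsilon$. The order of quantifiers (fix $P$ before refining $\beta'$, since $\beta'$ itself need not generate) and the exactness of the relation $\alpha\le(\beta')^P\vee\rho$ are the two points your sketch does not supply, and they are the substance of the proof in [Bo08b].
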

For a proof, we refer the reader to [Bo08b]. The notation there differs from the notation here in one respect: $\alpha^Q$ is defined to be $\bigvee_{q\in Q} T_q \alpha$. Also, only groups, rather than semigroups are treated in [Bo08b]. However the proof requires only minor obvious changes to extend it to the semigroup case. 

\subsection{Splittings}
Let us assume now (and for the rest of the paper) that $G=\langle s_1,\ldots,s_r\rangle$ is a free group or free semigroup with $1$. If $G$ is a group then let $S=\{s_1,\ldots, s_r, s_1^{-1},\ldots,s_r^{-1}\}$. If $G$ is only a semigroup, let $S=\{s_1,\ldots,s_r\}$. Let $G \c^T (X,\cB,\mu)$


\begin{defn}\label{defn:splitting}
Let $\alpha$ be a partition. A {\bf simple splitting} of $\alpha$ is a partition $\sigma$ of the form $\sigma=\alpha \vee T_s^{-1}\beta$ where $s\in S$ and $\beta$ is a coarsening of $\alpha$.

A {\bf splitting} of $\alpha$ is any partition $\sigma$ that can be obtained from $\alpha$ by a sequence of simple splittings. In other words, there exist partitions $\alpha_0, \alpha_1,\ldots,\alpha_m$ such that $\alpha_0=\alpha$, $\alpha_m = \sigma$ and $\alpha_{i+1}$ is a simple splitting of $\alpha_i$ for all $1 \le i < m$.
\end{defn}

\begin{remark}
In [Bo08b], an $S$-splitting of $\alpha$ is defined to be a partition $\sigma$ of the form $\sigma=\alpha \vee T_s \beta$ for $s\in S$. The definition given above is necessary to accommodate the case when $G$ is merely a semigroup.
\end{remark}

\begin{defn}\label{defn:cayley}
The {\bf right-Cayley graph} $\Gamma$ of $(G,S)$ is defined as follows. The vertex set of $\Gamma$ is $G$. For every $s \in S$ and every $g \in G$ there is a directed edge from $g$ to $gs$ labeled $s$. There are no other edges. 

The {\bf induced right-subgraph} of a subset $F \subset G$ is the largest subgraph of $\Gamma$ with vertex set $F$. A subset $F \subset G$ is {\bf right-connected} if its induced right-subgraph in $\Gamma$ is connected.
\end{defn}

\begin{lem}\label{lem:splittings}
If $\alpha, \beta \in \sP$, $\alpha$ refines $\beta$ and $F \subset G$ is finite, right-connected and contains the identity element $e$ then
$$\alpha \vee \bigvee_{f \in F} T_f^{-1}\beta$$
is a splitting of $\alpha$.
\end{lem}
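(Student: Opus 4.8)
The plan is to induct on the size of $F$. When $|F|=1$ we have $F=\{e\}$ and the statement reads that $\alpha$ is a splitting of $\alpha$, which holds vacuously (the empty sequence of simple splittings). For the inductive step, suppose $|F|=m+1$ and the lemma holds for all right-connected finite sets containing $e$ of size at most $m$. The key observation is that a finite right-connected subgraph of the right-Cayley graph $\Gamma$ of a free group or free semigroup is a tree (since $\Gamma$ itself is a tree, or more precisely has no cycles in the relevant sense for free $G$), so $F$ has a ``leaf'': an element $f_0 \in F$, $f_0 \neq e$, such that $F' := F \setminus \{f_0\}$ is still right-connected and contains $e$. Concretely, choose $f_0 \in F$ with $|f_0|$ maximal; then removing $f_0$ cannot disconnect $F$, because any right-path in $\Gamma$ between two vertices of $F'$ that passed through $f_0$ would have to enter and leave $f_0$, forcing a backtrack, contradicting the tree structure — and $e \in F'$ since $|f_0| \geq 1$.

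By the inductive hypothesis applied to $F'$, the partition $\gamma := \alpha \vee \bigvee_{f \in F'} T_f^{-1}\beta$ is a splitting of $\alpha$. It remains to show that $\delta := \gamma \vee T_{f_0}^{-1}\beta$ is a splitting of $\gamma$ — a single simple splitting will do. Since $f_0$ is a leaf attached to $F'$, there is an $s \in S$ and an $f_1 \in F'$ with $f_0 = f_1 s$ (or $f_0 = s^{-1} f_1$ in the group case; this is just the statement that $f_0$ is joined by an $s$-edge in the induced right-subgraph to some $f_1 \in F'$). Then $T_{f_0}^{-1}\beta = T_s^{-1} T_{f_1}^{-1}\beta$, and $T_{f_1}^{-1}\beta$ is a coarsening of $\gamma$ because $f_1 \in F'$ means $T_{f_1}^{-1}\beta$ is one of the joinands defining $\gamma$. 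Hence
$$\delta = \gamma \vee T_s^{-1}\big(T_{f_1}^{-1}\beta\big)$$
is a simple splitting of $\gamma$ in the sense of Definition \ref{defn:splitting}, with the coarsening role played by $T_{f_1}^{-1}\beta \le \gamma$. Concatenating the splitting sequence from $\alpha$ to $\gamma$ with this one extra step exhibits $\delta = \alpha \vee \bigvee_{f\in F} T_f^{-1}\beta$ as a splitting of $\alpha$, completing the induction.

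The main obstacle is the combinatorial lemma that a finite right-connected $F$ with $e \in F$ has a leaf whose removal keeps it right-connected and still containing $e$ — i.e., making precise the tree structure of induced right-subgraphs in the Cayley graph of a free group/semigroup and checking that the maximal-length element works. One must be slightly careful with orientations of edges: ``right-connected'' refers to connectivity of the underlying undirected graph of the induced right-subgraph, and in the semigroup case edges only go from $g$ to $gs$, so the decomposition $f_0 = f_1 s$ must be read off from which direction the connecting edge points; when $G$ is a group one also has $f_0 = s^{-1}f_1$ and $T_{f_0}^{-1}\beta = T_{s^{-1}}^{-1}T_{f_1}^{-1}\beta$ with $s^{-1}\in S$, so the simple-splitting form still applies. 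Everything else is bookkeeping with the definition of (simple) splitting.
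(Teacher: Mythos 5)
Your proof is correct and follows essentially the same route as the paper's: induct on $|F|$, peel off a leaf $f_0 = f_1 s$ of the induced right-subgraph (the paper takes a leaf $\neq e$ of a spanning tree, you take an element of maximal word length --- both work), and realize the final join as the simple splitting $\gamma \vee T_s^{-1}\big(T_{f_1}^{-1}\beta\big)$ with $T_{f_1}^{-1}\beta \le \gamma$. One trivial nitpick: in the group-case aside the alternative adjacency should be written $f_0 = f_1 s^{-1}$ (right multiplication, matching your formula $T_{f_0}^{-1}\beta = T_{s^{-1}}^{-1}T_{f_1}^{-1}\beta$), not $f_0 = s^{-1}f_1$, which would refer to the left-Cayley graph.
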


\begin{proof}
We prove this by induction on $|F|$. If $|F|=1$ then $F=\{e\}$ and the statement is trivial. Let $f_0 \in F-\{e\}$ be such that $F_1=F-\{f_0\}$ is right-connected. To see that such an $f_0$ exists, choose a spanning tree for the induced right-subgraph of $F$. Let $f_0$ be any leaf of this tree that is not equal to $e$.

By induction, $\alpha_1 := \alpha \vee \bigvee_{f \in F_1} T_f^{-1}\beta $ is a splitting of $\alpha$. Since $F$ is right-connected, there exists an element $f_1 \in F_1$ and an element $s_1 \in S$ such that $f_1s_1=f_0$. Since $f_1 \in F_1$, $\alpha_1$ refines $T_{f_1}^{-1}\beta$. Thus
 $$\alpha \vee \bigvee_{f \in F} T_f^{-1}\beta = \alpha_1 \vee T_{f_0}^{-1}\beta= \alpha_1 \vee T_{s_1}^{-1}(T_{f_1}^{-1}\beta)$$ is a splitting of $\alpha$.
\end{proof}

To ease notation, let $\alpha^n = \alpha^{B(e,n)}$ where $B(e,n)$ denotes the ball of radius $n$ centered at the identity element in $G$ with respect to the word metric induced by $S$.

\begin{prop}\label{prop:splitting}
Let $\alpha,\beta \in \sP$. Suppose there are $n,m \in \N$ such that $\alpha \le \beta^n \le \alpha^m$. Then $\alpha^m$ is a splitting of $\beta$.
\end{prop}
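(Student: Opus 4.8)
The plan is to show that $\alpha^m$ is a splitting of $\beta$ by building it up from $\beta$ via a sequence of simple splittings, using Lemma~\ref{lem:splittings} as the main engine. The key observation is that the hypothesis $\alpha \le \beta^n \le \alpha^m$ lets me interpolate: I will first pass from $\beta$ to $\beta^n$ (which refines $\alpha$), and then from $\beta^n$ onward up to $\alpha^m$, at each stage arranging that the partition I am refining already dominates the relevant translates.

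First I would handle the passage $\beta \rightsquigarrow \beta^n$. Since $B(e,n)$ is finite, right-connected and contains $e$, Lemma~\ref{lem:splittings} with the roles ``$\alpha$''$=\beta$ and ``$\beta$''$=\beta$ gives immediately that $\beta^n = \beta \vee \bigvee_{f\in B(e,n)} T_f^{-1}\beta$ is a splitting of $\beta$. Now $\beta^n$ refines $\alpha$ by hypothesis, so $\beta^n$ refines $\alpha$, and more generally I want to reach $\alpha^m = \alpha \vee \bigvee_{f\in B(e,m)} T_f^{-1}\alpha$. The second application of Lemma~\ref{lem:splittings}, this time with ``$\alpha$''$=\beta^n$ and ``$\beta$''$=\alpha$ and $F = B(e,m)$, shows that $\beta^n \vee \bigvee_{f\in B(e,m)} T_f^{-1}\alpha$ is a splitting of $\beta^n$, hence (composing the two sequences of simple splittings) a splitting of $\beta$. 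Finally I must identify this last partition with $\alpha^m$: since $\alpha \le \beta^n \le \alpha^m$, we have $T_f^{-1}\alpha \le T_f^{-1}\beta^n \le T_f^{-1}\alpha^m$ for every $f$, and in particular $\beta^n \le \alpha^m$, while also $\bigvee_{f\in B(e,m)} T_f^{-1}\alpha = \alpha^m \ge \beta^n$ (using $e\in B(e,m)$ and $\alpha\le\beta^n\le\alpha^m$ to absorb $\beta^n$). Wait — I need $\beta^n \le \alpha^m$, which is exactly the given hypothesis, so $\beta^n \vee \alpha^m = \alpha^m$, and therefore $\beta^n \vee \bigvee_{f\in B(e,m)} T_f^{-1}\alpha = \beta^n \vee \alpha^m = \alpha^m$. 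This gives the claim.

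The one subtlety I need to be careful about is the hypothesis in Lemma~\ref{lem:splittings} that ``$\alpha$ refines $\beta$'': in my second application I need $\beta^n$ to refine $\alpha$, which holds because $\alpha \le \beta^n$ is given. I should double-check that right-connectedness of balls is genuinely available — $B(e,m)$ is right-connected in the right-Cayley graph since any $g$ with $|g|\le m$ is joined to the identity by a geodesic path all of whose vertices lie in $B(e,m)$, and each geodesic step is an edge of $\Gamma$ (in the semigroup case one must note geodesics only move away from $e$, which is still fine). The main obstacle, such as it is, is purely bookkeeping: making sure the two invocations of Lemma~\ref{lem:splittings} compose correctly (a splitting of a splitting is a splitting, directly from Definition~\ref{defn:splitting}) and that the final join collapses to $\alpha^m$ using the sandwich inequality rather than something stronger. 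No delicate estimates are needed; everything is formal manipulation of refinements and the combinatorics of balls in the Cayley graph.
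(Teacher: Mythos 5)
Your proof is correct and is essentially the paper's own argument: the paper's one-line proof (``by the previous lemma, $\beta^n \vee \alpha^m = \alpha^m$ is a splitting of $\beta$'') is exactly your two-step application of Lemma~\ref{lem:splittings}, first passing from $\beta$ to $\beta^n$ and then joining the translates $T_f^{-1}\alpha$ over $B(e,m)$, with the sandwich hypothesis collapsing the result to $\alpha^m$. You have simply supplied the details the paper leaves implicit.
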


\begin{proof}
By the previous lemma, $\beta^n \vee \alpha^m=\alpha^m$ is a splitting of $\beta$.
\end{proof}

\section{An alternative formula for the $f$-invariant}\label{sec:alternative}


Recall the definitions of $F$ and $F_*$ from the introduction. We will write $F(\alpha|\sF)$ for $F(T,\alpha|\sF)$ when $T$ is clear. Similar statements apply to $f(\alpha|\sF)$, $F_*(\alpha|\sF)$, etc.

\begin{prop}\label{prop:splittingmonotone}
Let $G\c^T (X,\cB,\mu)$. If $\sF \subset \cB$ is any $T(G)$-invariant $\sigma$-algebra, $\alpha$ is any partition with $H(\alpha)<\infty$, and $\sigma$ is any splitting (definition \ref{defn:splitting}) of $\alpha$ then $F(\sigma|\sF) \le F(\alpha|\sF)$ and $F_*(\sigma|\sF) \le F_*(\sigma|\sF)$.
\end{prop}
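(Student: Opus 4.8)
The plan is to reduce everything to a single \emph{simple} splitting and then estimate the excess $F(\sigma|\sF)-F(\alpha|\sF)$, and its $F_*$-analogue, one generator-direction at a time, showing that the extra copy of $\beta$ can never add enough to overcome the negative coefficient of $H(\cdot|\sF)$. Since a splitting is by definition a finite chain of simple splittings, and finiteness of entropy propagates along such a chain ($\beta\le\alpha$ forces $H(\alpha\vee T_s^{-1}\beta)\le H(\alpha)+H(\beta)\le 2H(\alpha)$), it suffices by induction on the length of the chain to prove both inequalities when $\sigma=\alpha\vee T_s^{-1}\beta$ with $s\in S$ and $\beta$ a coarsening of $\alpha$. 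Throughout I set $\delta:=H(\sigma|\sF)-H(\alpha|\sF)$, which by the chain rule equals $H(T_s^{-1}\beta\,|\,\sF\vee\alpha)\ge 0$, and let $j\in\{1,\dots,r\}$ be the unique index with $s\in\{s_j,s_j^{-1}\}$ (in the semigroup case $s=s_j$). The only tools needed are standard facts recalled in \S\ref{sec:standard}: monotonicity of conditional entropy under refining the conditioning $\sigma$-algebra, the chain rule $H(\gamma\vee\eta|\mathcal H)=H(\gamma|\mathcal H)+H(\eta|\gamma\vee\mathcal H)$ and its subadditive consequence, the invariance $H(T^{-1}\gamma\,|\,T^{-1}\mathcal H)=H(\gamma|\mathcal H)$ for measure-preserving $T$, and the identity $h(T_{s_i},\gamma|\sF)=H\big(\gamma\,\big|\,\sF\vee\bigvee_{k\ge 1}T_{s_i}^{-k}\gamma\big)$, valid since $\sF$ is $T(G)$-invariant.

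For $F$: the $H(\cdot|\sF)$-terms contribute exactly $(1-2r)\delta$, so it remains to show $\sum_{i=1}^r E_i\le(2r-1)\delta$, where $\sF^{(i)}:=\sF\vee T_{s_i}^{-1}\sF$ and
$$E_i:=H(\sigma\vee T_{s_i}^{-1}\sigma\,|\,\sF^{(i)})-H(\alpha\vee T_{s_i}^{-1}\alpha\,|\,\sF^{(i)}).$$
Expanding $\sigma\vee T_{s_i}^{-1}\sigma=(\alpha\vee T_{s_i}^{-1}\alpha)\vee(T_s^{-1}\beta\vee T_{s_i}^{-1}T_s^{-1}\beta)$ and using the chain rule gives $E_i=H\big(T_s^{-1}\beta\vee T_{s_i}^{-1}T_s^{-1}\beta\,\big|\,\alpha\vee T_{s_i}^{-1}\alpha\vee\sF^{(i)}\big)$. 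Subadditivity bounds this by the two single-copy conditional entropies; bounding the first by dropping the $T_{s_i}$-shifted parts of the conditioning, and the second by dropping the unshifted parts and then invoking $H(T^{-1}\gamma|T^{-1}\mathcal H)=H(\gamma|\mathcal H)$, each of them is $\le H(T_s^{-1}\beta|\sF\vee\alpha)=\delta$, so $E_i\le 2\delta$ for every $i$. For the distinguished index $i=j$, one of the two extra copies already lies in $\alpha\vee T_{s_j}^{-1}\alpha$ (namely $T_s^{-1}\beta=T_{s_j}^{-1}\beta\le T_{s_j}^{-1}\alpha$ if $s=s_j$, and $T_{s_j}^{-1}T_s^{-1}\beta=\beta\le\alpha$ if $s=s_j^{-1}$), so $E_j\le\delta$. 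Summing, $\sum_i E_i\le\delta+(r-1)\cdot 2\delta=(2r-1)\delta$, hence $F(\sigma|\sF)-F(\alpha|\sF)\le(1-2r)\delta+(2r-1)\delta=0$.

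For $F_*$: write $\mathcal H_i(\gamma):=\sF\vee\bigvee_{k\ge 1}T_{s_i}^{-k}\gamma$, so $h(T_{s_i},\gamma|\sF)=H(\gamma\,|\,\mathcal H_i(\gamma))$ and $\mathcal H_i(\sigma)\supseteq\mathcal H_i(\alpha)$. The chain rule gives
$$h(T_{s_i},\sigma|\sF)-h(T_{s_i},\alpha|\sF)=\big(H(\alpha\,|\,\mathcal H_i(\sigma))-H(\alpha\,|\,\mathcal H_i(\alpha))\big)+H\big(T_s^{-1}\beta\,\big|\,\alpha\vee\mathcal H_i(\sigma)\big),$$
where the first summand is $\le 0$ by monotonicity and the second is $\le H(T_s^{-1}\beta|\sF\vee\alpha)=\delta$; thus each difference is $\le\delta$. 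For $i=j$ the extra $\beta$-copies collapse into the $s_j$-past of $\alpha$: if $s=s_j$ then $\bigvee_{k\ge1}T_{s_j}^{-k}T_{s_j}^{-1}\beta\le\bigvee_{k\ge2}T_{s_j}^{-k}\alpha$, so $\mathcal H_j(\sigma)=\mathcal H_j(\alpha)$ and $T_{s_j}^{-1}\beta$ is $\mathcal H_j(\alpha)$-measurable, making the difference $0$; if $s=s_j^{-1}$ then $\mathcal H_j(\sigma)=\mathcal H_j(\alpha)\vee\beta$, and combining $H(\alpha\,|\,\mathcal H_j(\alpha)\vee\beta)=H(\alpha\,|\,\mathcal H_j(\alpha))-H(\beta\,|\,\mathcal H_j(\alpha))$ (from $\beta\le\alpha$) with $H(T_{s_j}\beta\,|\,\alpha\vee\mathcal H_j(\alpha))=H(\beta\,|\,\mathcal H_j(\alpha))$ (measure-preservation and $T_{s_j}^{-1}\sF=\sF$) again yields $0$. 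Hence $\sum_i\big(h(T_{s_i},\sigma|\sF)-h(T_{s_i},\alpha|\sF)\big)\le(r-1)\delta$ and $F_*(\sigma|\sF)-F_*(\alpha|\sF)\le(1-r)\delta+(r-1)\delta=0$.

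The point I expect to be delicate is not any single estimate but the accounting: the crude per-direction bound ($2\delta$ for $F$, $\delta$ for $F_*$) falls short by exactly one factor of $\delta$, and closing that gap forces one to single out the direction $s_j$ carried by the splitting and verify its excess is $\le\delta$ (resp.\ $=0$) because one copy of $\beta$ is absorbed into $\alpha$ (resp.\ into the infinite past of $\alpha$ in that direction); the $s=s_j^{-1}$ subcase of the $F_*$ computation is where one has to be most careful. A secondary concern is to keep every step valid when $G$ is only a semigroup, i.e.\ when $T_{s_i}$ need not be invertible, which is why I would phrase all manipulations through $H(T^{-1}\gamma\,|\,T^{-1}\mathcal H)=H(\gamma|\mathcal H)$ and monotonicity rather than by applying $T_{s_i}$ to an identity, and why the case $s=s_j^{-1}$ simply does not arise there.
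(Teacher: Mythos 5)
Your proof is correct and follows essentially the same route as the paper's: reduce to a simple splitting $\sigma=\alpha\vee T_s^{-1}\beta$, write each term of $F(\sigma|\sF)-F(\alpha|\sF)$ as a conditional entropy of the extra $\beta$-copies via the chain rule, bound each generic direction by $2\delta$ (resp.\ $\delta$ for $F_*$) using subadditivity and monotonicity, and observe that in the distinguished direction carried by $s$ one copy is absorbed, closing the $(1-2r)\delta$ (resp.\ $(1-r)\delta$) gap. The only divergence is in the $F_*$ half, where the paper quotes the relative Abramov--Rohlin formula $h(T_s,\sigma|\sF)=h(T_s,\alpha|\sF)+h(T_s,\sigma|\alpha^s\vee\sF)$ while you derive the needed per-direction estimate directly from $h(T_{s_i},\gamma|\sF)=H(\gamma\,|\,\sF\vee\bigvee_{k\ge1}T_{s_i}^{-k}\gamma)$ and the chain rule, which is a slightly more self-contained but equivalent accounting.
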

\begin{proof}
It suffices to consider the case in which $\sigma$ is a simple splitting. So, there exists $t\in S$ and a coarsening $\beta $ of $\alpha$ such that $\sigma = \alpha \vee T_t^{-1}\beta$. We will assume that $t\in \{s_1,\ldots,s_r\}$. The proof in the case that $t\in \{s_1^{-1},\ldots,s_r^{-1}\}$ is similar. Using lemma \ref{lem:relative}, it follows that
\begin{eqnarray*}
F(\sigma|\sF) &=& F(\alpha|\sF) + (1-2r)H(\sigma|\alpha \vee \sF) + \sum_{i=1}^r H(\sigma \vee T_{s_i}^{-1}\sigma|\alpha \vee T_{s_i}^{-1}\alpha \vee \sF).
\end{eqnarray*}
Note that
\begin{eqnarray*}
 H(\sigma \vee T^{-1}_s\sigma|\alpha \vee T_s^{-1}\alpha \vee \sF) &\le &  H(\sigma |\alpha \vee T_s^{-1}\alpha \vee \sF)
+  H( T_s^{-1}\sigma|\alpha \vee T_s^{-1}\alpha \vee \sF).
\end{eqnarray*}
Since $\sigma$ is refined by $\alpha \vee T_t^{-1}\alpha$ it follows that $H(\sigma |\alpha \vee T_t^{-1}\alpha \vee \sF) = 0.$ Without loss of generality, $t=s_r$. Thus,
\begin{eqnarray*}
F(\sigma|\sF) &\le& F(\alpha|\sF) + \Big( \sum_{i=1}^{r-1} H(\sigma |\alpha \vee T_{s_i}^{-1}\alpha \vee \sF)
+  H( T_{s_i}^{-1}\sigma|\alpha \vee T_{s_i}^{-1}\alpha \vee \sF) - 2H(\sigma|\alpha \vee \sF)\Big)\\
&&+ H( T_{s_r}^{-1}\sigma|\alpha \vee T_s^{-1}\alpha \vee \sF) - H(\sigma|\alpha \vee \sF).
\end{eqnarray*}
Since $H(\sigma |\alpha \vee T_{s_i}^{-1}\alpha \vee \sF) \le H(\sigma|\alpha\vee\sF)$ and 
$$ H( T_{s_i}^{-1}\sigma|\alpha \vee T_{s_i}^{-1}\alpha \vee \sF) \le H( T_{s_i}^{-1}\sigma| T_{s_i}^{-1}\alpha \vee \sF) =H(\sigma|\alpha\vee\sF)$$
it follows that $F(\sigma|\sF) \le F(\alpha|\sF)$ as claimed.

The proof in the case of $F_*$ is similar. By a well-known relative version of theorem \ref{thm:AR}, if $s\in S$ then
$$h(T_s, \sigma | \sF)= h(T_s,\alpha|\sF) + h(T_s,\sigma | \alpha^s \vee \sF)$$
where $\alpha^s$ is the smallest $T_s$-invariant $\sigma$-algebra containing $\alpha$.  As above, assume $t=s_r$. Thus,
\begin{eqnarray*}
F_*(\sigma|\sF) &=& F_*(\alpha|\sF) + (1-r)H(\sigma|\alpha\vee\sF) + \sum_{i=1}^r h(T_{s_i},\sigma|\alpha^{s_i} \vee \sF) \\
&=& F_*(\alpha|\sF) + (1-r)H(\sigma|\alpha\vee\sF) + \sum_{i=1}^{r-1} h(T_{s_i},\sigma|\alpha^{s_i} \vee \sF)\\
&=& F_*(\alpha|\sF) + \sum_{i=1}^{r-1} \big(h(T_{s_i},\sigma|\alpha^{s_i} \vee \sF)-H(\sigma|\alpha\vee\sF) \big).
\end{eqnarray*}
The second equality occurs because $\alpha \vee T_{s_r}^{-1}\alpha$ refines $\sigma$ implies $h(T_{s_r}, \sigma|\alpha^{s_r})=0.$ Since $h(T_s,\sigma|\alpha^s \vee \sF) \le H(\sigma|\alpha \vee \sF)$ for each $s\in S$, the above equality implies the lemma.
\end{proof}

\begin{defn}\label{defn:f}
If $\sF\subset \cB$ is any $T(G)$-invariant $\sigma$-algebra then define
$$f(\alpha|\sF) := \lim_{n\to \infty} F(\alpha^n |\sF) = \inf_n F(\alpha^n|\sF),$$
$$f_*(\alpha|\sF):=\lim_{n\to \infty} F_*(\alpha^n |\sF) = \inf_n F_*(\alpha^n|\sF).$$
The previous proposition and proposition \ref{prop:splitting} implies that this is well-defined. When we need to emphasize the dependence on $\mu$ and/or $T$ we will write $f(\mu,\alpha|\sF)$ or $f(T,\alpha|\sF)$ for $f(\alpha|\sF)$ and similarly for $F, F_*,f_*$.
\end{defn}

Next we investigate the continuity properties of these functions.
\begin{prop}\label{prop:partitioncontinuity}
Let $G \c^T (X,\cB,\mu)$. Let $\sP$ be the space of partitions $\alpha$ of $X$ with $H(\alpha)<\infty$. Endow $\sP$ with the topology induced by the Rohlin distance (definition \ref{defn:rohlin}). Then $F$ and $F_*$ are continuous on $\sP$ and $f$ and $f_*$ are upper semi-continuous on $\sP$.
\end{prop}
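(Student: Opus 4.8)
The plan is to establish the four assertions in the order: continuity of $F$, continuity of $F_*$, upper semi-continuity of $f$, upper semi-continuity of $f_*$; the first two are the real content and the last two are soft consequences.

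\medskip

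\textbf{Continuity of $F$ (and $F_*$) on $\sP$.} Since $F(\alpha) = (1-2r)H(\alpha) + \sum_{i=1}^r H(\alpha \vee T_{s_i}^{-1}\alpha)$ is a finite linear combination of terms of the form $H(\beta_1 \vee \cdots \vee \beta_k)$, where each $\beta_j$ is either $\alpha$ or $T_{s_i}^{-1}\alpha$, it suffices to show that $\alpha \mapsto H(\alpha \vee T_{s_i}^{-1}\alpha)$ is continuous in the Rohlin metric $d$, and the constant term $H(\alpha)$ is trivially continuous since $|H(\alpha) - H(\beta)| \le d(\alpha,\beta)$ by Definition \ref{defn:rohlin}. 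First I would record the standard estimate that if $\alpha,\beta \in \sP$ then
$$|H(\alpha) - H(\beta)| \le d(\alpha,\beta), \qquad H(\alpha \vee \beta) \le H(\alpha) + H(\beta),$$
and the subadditivity/Lipschitz behavior of $\vee$ with respect to $d$: for partitions $\alpha,\alpha',\beta,\beta'$,
$$d(\alpha \vee \beta, \alpha' \vee \beta') \le d(\alpha,\alpha') + d(\beta,\beta').$$
(This last inequality is classical — see e.g.\ the triangle-type estimates around [Pa69, Thm 5.22] or [Gl03].) Because $T_{s_i}$ acts isometrically on $(\sP,d)$ (Definition \ref{defn:rohlin}), we get $d(T_{s_i}^{-1}\alpha, T_{s_i}^{-1}\beta) = d(\alpha,\beta)$, hence $d(\alpha \vee T_{s_i}^{-1}\alpha,\ \beta \vee T_{s_i}^{-1}\beta) \le 2 d(\alpha,\beta)$, and therefore $|H(\alpha \vee T_{s_i}^{-1}\alpha) - H(\beta \vee T_{s_i}^{-1}\beta)| \le 2d(\alpha,\beta)$. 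Summing, $F$ is in fact Lipschitz in $d$ (with constant depending only on $r$), which gives continuity. For $F_*$, the only new ingredient is the continuity of $\alpha \mapsto h(T_{s_i},\alpha)$; here I would use the standard fact that $h(T,\alpha)$ is Lipschitz in the Rohlin metric with constant $1$, i.e.\ $|h(T,\alpha) - h(T,\beta)| \le d(\alpha,\beta)$ (which follows from $h(T,\alpha) \le h(T,\beta) + H(\alpha|\beta)$ and symmetry, a classical estimate). Then $F_* = (1-r)H(\alpha) + \sum h(T_{s_i},\alpha)$ is again Lipschitz in $d$, hence continuous.

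\medskip

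\textbf{Upper semi-continuity of $f$ and $f_*$.} By Definition \ref{defn:f} we have $f(\alpha) = \inf_n F(\alpha^n)$ and $f_*(\alpha) = \inf_n F_*(\alpha^n)$. For each fixed $n$, the map $\alpha \mapsto \alpha^n = \bigvee_{q \in B(e,n)} T_q^{-1}\alpha$ is $|B(e,n)|$-Lipschitz from $(\sP,d)$ to $(\sP,d)$ (again using that each $T_q$ is an isometry and the Lipschitz property of $\vee$), and by the previous paragraph $F$ and $F_*$ are continuous on $\sP$. Hence $\alpha \mapsto F(\alpha^n)$ and $\alpha \mapsto F_*(\alpha^n)$ are continuous for each $n$. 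An infimum of a family of continuous functions is upper semi-continuous, so $f$ and $f_*$ are upper semi-continuous on $\sP$. (One should note $H(\alpha^n) \le |B(e,n)| H(\alpha) < \infty$, so $\alpha^n \in \sP$ and everything is well-defined.)

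\medskip

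\textbf{Main obstacle.} The only nontrivial point is the Lipschitz estimate $d(\alpha \vee \beta, \alpha' \vee \beta') \le d(\alpha,\alpha') + d(\beta,\beta')$ and the corresponding estimate $|h(T,\alpha) - h(T,\beta)| \le d(\alpha,\beta)$; both are classical but deserve a one-line citation or a short derivation from Lemma \ref{lem:relative} (e.g.\ $H(\alpha\vee\beta \mid \alpha'\vee\beta') \le H(\alpha \mid \alpha') + H(\beta\mid\beta')$ via monotonicity and subadditivity of conditional entropy). Everything else is bookkeeping: once continuity of $F,F_*$ is in hand, upper semi-continuity of $f,f_*$ is immediate from the infimum representation.
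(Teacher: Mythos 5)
Your proposal is correct and follows essentially the same route as the paper: continuity of $F$ and $F_*$ from continuity of $H(\cdot)$ and of $h(T_s,\cdot)$ in the Rohlin metric, continuity of $\alpha\mapsto\alpha^n$, and then upper semi-continuity of $f$ and $f_*$ as infima of continuous functions. The only difference is that you supply explicit Lipschitz estimates where the paper cites Sinai and [Bo08b, lemma 4.2]; these estimates are standard and correct.
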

\begin{proof}
It is immediate that $F$ is continuous. Y. Sinai proved that for every $s\in S$, the function $\alpha \mapsto h(T_s, \alpha)$ is continuous on $\sP$ (see for example [Gl03]). From this it follows that $F_*$ is continuous. The function $\alpha \mapsto \alpha^n$ is continuous by lemma 4.2 of [Bo08b] (it is also an easy exercise). Thus, each of $f$ and $f_*$ is an infimum of a sequence of continuous functions. This implies that $f$ and $f_*$ are upper semi-continuous.
\end{proof}

Later (in lemma \ref{lem:mucontinuous}) we investigate the continuity properties of the above functions in the variable $\mu$ rather than $\alpha$.
\begin{thm}
Let $G\c^T (X,\cB,\mu)$. If $\alpha$ and $\beta$ are any two generating partitions with $H(\alpha)+H(\beta)<\infty$ and $\sF\subset \cB$ is any $T(G)$-invariant $\sigma$-algebra then $f(\alpha|\sF)=f(\beta|\sF)$ and $f_*(\alpha|\sF)=f_*(\beta|\sF)$. Thus, we can define $f(T|\sF)=f(\alpha|\sF)$ and $f_*(T|\sF)=f_*(\alpha|\sF)$ for any finite-entropy generating partition $\alpha$.
\end{thm}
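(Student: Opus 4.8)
\emph{Proof proposal.} The plan is to run, in the relative setting, the two-step argument by which [Bo08a] shows that the $f$-invariant is a measure-conjugacy invariant. First I would prove that two \emph{equivalent} partitions (in the sense of \S\ref{sec:spaceofpartitions}) give the same values of $f(\cdot|\sF)$ and $f_*(\cdot|\sF)$. Then I would pass from this to arbitrary generating partitions using the density statement of Theorem \ref{thm:dense} together with the upper semi-continuity from Proposition \ref{prop:partitioncontinuity}.

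\textbf{Step 1 (equivalent partitions).} Suppose $\alpha,\beta\in\cP$ are equivalent, say $\alpha\le\beta^P$ and $\beta\le\alpha^Q$ with $P,Q\subset G$ finite, and fix $n_0,m_0$ with $P\subseteq B(e,n_0)$ and $Q\subseteq B(e,m_0)$. A routine computation with the word metric gives $\alpha^n\le\beta^{n+n_0}$ for every $n$, and $\beta\le\alpha^{m_0}\le\alpha^n$ for every $n\ge m_0$; in particular $\alpha^n$ refines $\beta$ once $n\ge m_0$. Applying Lemma \ref{lem:splittings} with the right-connected set $B(e,n+n_0)\ni e$, the partition $\alpha^n\vee\beta^{n+n_0}$ is a splitting of $\alpha^n$; but $\alpha^n\le\beta^{n+n_0}$ forces this join to equal $\beta^{n+n_0}$, so $\beta^{n+n_0}$ is itself a splitting of $\alpha^n$. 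Proposition \ref{prop:splittingmonotone} then yields $F(\beta^{n+n_0}|\sF)\le F(\alpha^n|\sF)$ and $F_*(\beta^{n+n_0}|\sF)\le F_*(\alpha^n|\sF)$ for all $n\ge m_0$. Taking infima and using that $k\mapsto F(\alpha^k|\sF)$ is non-increasing (Definition \ref{defn:f}), I get $f(\beta|\sF)\le f(\alpha|\sF)$ and $f_*(\beta|\sF)\le f_*(\alpha|\sF)$, and symmetry upgrades these to equalities.

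\textbf{Step 2 (general generating partitions).} Let $\alpha,\beta$ be generating with $H(\alpha)+H(\beta)<\infty$. By Theorem \ref{thm:dense} there are $\gamma_k\in\cP$, each equivalent to $\alpha$ (hence generating, since equivalence preserves $\alpha^G$), with $d(\gamma_k,\beta)\to 0$. Step 1 gives $f(\gamma_k|\sF)=f(\alpha|\sF)$ and $f_*(\gamma_k|\sF)=f_*(\alpha|\sF)$ for all $k$, while upper semi-continuity of $f(\cdot|\sF)$ and $f_*(\cdot|\sF)$ on $\cP$ in the Rohlin distance — the proof of Proposition \ref{prop:partitioncontinuity} applies unchanged once one substitutes the relative version of Sinai's continuity theorem for $\alpha\mapsto h(T_s,\alpha|\sF)$ — gives $f(\alpha|\sF)=\limsup_k f(\gamma_k|\sF)\le f(\beta|\sF)$, and similarly for $f_*$. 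Exchanging the roles of $\alpha$ and $\beta$ finishes the proof, and the common values are by definition $f(T|\sF)$ and $f_*(T|\sF)$.

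\textbf{Main obstacle.} The genuine content is Theorem \ref{thm:dense}, which is imported from [Bo08b]; granting it, everything above is bookkeeping. The two points I would watch are (i) tracking the radii $n_0,m_0$ in Step 1 so that the splitting produced by Lemma \ref{lem:splittings} collapses \emph{exactly} to $\beta^{n+n_0}$, and (ii) confirming that Proposition \ref{prop:partitioncontinuity} and the monotonicity built into Definition \ref{defn:f} hold verbatim in the presence of a fixed invariant $\sigma$-algebra $\sF$ — which they do, since conditioning on $\sF$ interacts trivially with every limiting and continuity estimate used here.
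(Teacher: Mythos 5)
Your proposal is correct and follows essentially the same route as the paper: Theorem \ref{thm:dense} to approximate $\beta$ by partitions equivalent to $\alpha$, Propositions \ref{prop:splitting} and \ref{prop:splittingmonotone} to compare $f$-values along the equivalence via splittings, and the upper semi-continuity of Proposition \ref{prop:partitioncontinuity} to pass to the limit, with the relative case handled by the same estimates. The only cosmetic difference is that you isolate the "equivalent partitions give equal $f$" step as a separate lemma and prove it as an equality, where the paper only records the one inequality it needs and obtains the other by exchanging $\alpha$ and $\beta$ at the end.
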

\begin{proof}
This follows from theorem \ref{thm:dense} and propositions \ref{prop:splitting}, \ref{prop:splittingmonotone} and \ref{prop:partitioncontinuity}. To see this, note that from theorem \ref{thm:dense}, there exists a sequence of partitions $\alpha_n$ with $d(\alpha_n,\beta) \to 0$ as $n\to\infty$ and integers $m(n), p(n)$ with $\alpha_n \le \alpha^{m(n)} \le \alpha_n^{p(n)}$. Thus $\alpha \le \alpha_n^{p(n)} \le \alpha^{m(n)+p(n)}$. From proposition \ref{prop:splitting}, this implies that $\alpha^{m(n)+p(n)}$ is a splitting of $\alpha_n$. Thus for every $k$, $\alpha^{m(n)+p(n)+k}$ is a splitting of $\alpha_n^k$. By proposition \ref{prop:splittingmonotone}, this implies $F(\alpha^{m(n)+p(n)+k}) \le F(\alpha^k_n)$. The definition of $f$ now implies that $f(\alpha) \le f(\alpha_n)$ for all $n$. By the previous proposition, $f$ is upper semi-continuous. Since $\alpha_n$ converges to $\beta$, this implies $f(\alpha) \le f(\beta)$. By reversing the roles of $\alpha$ and $\beta$ we obtain the reverse inequality. Hence $f(\alpha)=f(\beta)$ as claimed. The conditional case and the case of $f_*$ in place of $f$ are similar.

\end{proof}

In section \ref{sec:f=f_*}, it is proven that $f(T|\sF)=f_*(T|\sF)$. The proof uses Markov processes which are studied next.

\section{Markov Processes}\label{sec:markovprocesses}


\begin{defn}
A {\bf$G$-process} is a quadruple $(T,X,\mu,\alpha)$ where $G \c^T (X,\cB,\mu)$ and $\alpha$ is a partition of $X$.
\end{defn}

\begin{defn}
Two processes $(T,X,\mu,\alpha)$ and $(U,Y,\nu,\beta)$ are {\bf isomorphic} if there exists conull sets $X' \subset X, Y' \subset Y$ and a measurable map $\phi:X' \to Y'$ with measurable inverse $\phi^{-1}:Y' \to X'$ such that $\phi_*\mu=\nu$, $\phi(T_gx)=U_g\phi(x) \forall g\in G, x\in X'$ and $\phi_*\alpha=\beta$ (i.e., $\phi$ induces a bijection from $\alpha$ to $\beta$).
\end{defn}

\begin{defn}\label{defn:leftcayley}
The {\bf left-Cayley graph} $\Gamma_L$ of $(G,S)$ is defined as follows. Its vertex set is $G$ and for every $g\in G$ and $s\in S$ there is a directed edge from $g$ to $sg$ there are no other edges. If $F \subset G$ then the {\bf left-subgraph induced by $F$} is the subgraph of $\Gamma_L$ that has vertex set equal to $F$ and contains every edge in $\Gamma_L$ whose endpoints are in $F$. A set $F$ is {\bf left-connected} if the left-subgraph induced by $F$ is connected.

This is opposite the right-Cayley graph which was defined earlier (definition \ref{defn:cayley}).
\end{defn}

\begin{defn}
For all $g_1, g_2 \in G$ let $\past(g_1;g_2) \subset G$ be the set of all $f\in G$ such that every path in the left-Cayley graph $\Gamma_L$ from $f$ to $g_1$ passes through $g_2$.
\end{defn}

\begin{defn}
For any measure $\mu$ on $X$, any Borel set $A \subset X$ and any $\sigma$-algebra $\sF$, let $\mu(A|\sF):X \to \R$ denote the conditional expectation of the characteristic function $\chi_A$ of $A$ with respect to $\sF$.
\end{defn}

\begin{defn}
A process $(T,X,\mu,\alpha)$ is a {\bf Markov process} if for every $s\in S$, $g\in G$  and every $A \in \alpha$
$$\mu\Big(T_{sg}^{-1}A \big| \bigvee_{f \in \past(sg;g)} T_f^{-1}\alpha\Big)(x) =  \mu(T_{sg}^{-1}A | T_g^{-1}\alpha)(x)=\mu(T_s^{-1}A | \alpha)(x)$$
for $\mu$-a.e. $x\in X$. The second equality above is automatically true since $T_g$ preserves $\mu$. By lemma \ref{lem:relative} this is equivalent to:
$$H\Big(T_{sg}^{-1}\alpha \big| \bigvee_{f \in \past(sg;g)} T_f^{-1}\alpha\Big) = H(T_{sg}^{-1}\alpha | T_g^{-1}\alpha) = H(T_s^{-1}\alpha | \alpha).$$
for every $g\in G$.
\end{defn}

The main result of this section is:
\begin{thm}\label{thm:f=F}
If $(T,X,\mu,\alpha)$ is a Markov process and $\beta$ is a coarsening of $\alpha$ then $f(T,\alpha|\beta^G)=F(T,\alpha|\beta^G)=F_*(T,\alpha|\beta^G)=f_*(T,\alpha|\beta^G)$ where $\beta^G$ is the smallest $T(G)$-invariant $\sigma$-algebra containing $\beta$.
\end{thm}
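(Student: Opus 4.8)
The plan is to prove the chain of equalities
$$f(T,\alpha|\beta^G) = F(T,\alpha|\beta^G) = F_*(T,\alpha|\beta^G) = f_*(T,\alpha|\beta^G)$$
by showing that for a Markov process, all the "finite-window" quantities $F(\alpha^n|\beta^G)$ and $F_*(\alpha^n|\beta^G)$ are already equal to $F(\alpha|\beta^G)$, after which the infima defining $f$ and $f_*$ collapse. Since by Proposition \ref{prop:splittingmonotone} the sequences $F(\alpha^n|\beta^G)$ and $F_*(\alpha^n|\beta^G)$ are non-increasing in $n$ (each $\alpha^{n+1}$ is a splitting of $\alpha^n$ by Lemma \ref{lem:splittings}, as balls are right-connected), it suffices to prove $F(\alpha^n|\beta^G) = F(\alpha|\beta^G)$ for all $n$, i.e. that the first simple splitting — and hence every splitting built from $\alpha$ — does not decrease $F$. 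Comparing with the inequality chain in the proof of Proposition \ref{prop:splittingmonotone}, equality $F(\sigma|\sF) = F(\alpha|\sF)$ for a simple splitting $\sigma = \alpha \vee T_{s_i}^{-1}\beta'$ holds exactly when the two inequalities used there are equalities, namely
$$H\bigl(\sigma \mid \alpha \vee T_{s_j}^{-1}\alpha \vee \sF\bigr) = H(\sigma \mid \alpha \vee \sF), \qquad H\bigl(T_{s_j}^{-1}\sigma \mid \alpha \vee T_{s_j}^{-1}\alpha \vee \sF\bigr) = H(\sigma \mid \alpha \vee \sF)$$
for all $j \ne i$. The first is a conditional-independence statement: refining the conditioning $\sigma$-algebra by $T_{s_j}^{-1}\alpha$ does not change the conditional distribution of $\sigma$ (a function of $\alpha \vee T_{s_i}^{-1}\alpha$), and this should follow from the Markov property applied in the $s_j$-direction together with the fact that $T_{s_i}^{-1}\alpha$ and $T_{s_j}^{-1}\alpha$ live in "past" directions that are disjoint in the left-Cayley graph. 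The second is similar after using $T_{s_j}$-invariance of $\mu$ and $\sF$.

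Concretely, I would proceed by induction on $n$, showing $F(\alpha^{n+1}|\beta^G) = F(\alpha^n|\beta^G)$. Write $\alpha^{n+1}$ as obtained from $\alpha^n$ by successively joining on partitions $T_g^{-1}\alpha$ with $|g| = n+1$; each such join is a simple splitting $\sigma = \gamma \vee T_{s}^{-1}(T_h^{-1}\alpha)$ where $\gamma$ is the partition built so far, $g = sh$ with $|h| = n$, $|g| = |h| + 1$, and $\gamma$ refines $T_h^{-1}\alpha$. The key point is to show that for the relevant generator $s_j \ne s$ (and its inverse), the partition $\sigma$ is conditionally independent of $T_{s_j}^{-1}\gamma$ given $\gamma \vee \beta^G$, which reduces the inequalities in Proposition \ref{prop:splittingmonotone} to equalities. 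I expect this to be the technical heart: it requires carefully unwinding the definition of $\past(\cdot;\cdot)$ and the Markov property to see that the new "leaf" direction $T_g^{-1}\alpha$ — and more generally all atoms in $\gamma$ reached through $s$ from $h$ — are conditioned away by $\gamma$ itself (which already contains $T_h^{-1}\alpha$), so that adjoining a partition in a transverse direction $T_{s_j}^{-1}(\cdot)$ contributes no new information about $\sigma$. This is where the tree structure of $G$ (uniqueness of geodesics, so "pasts" in distinct generator directions are nested or disjoint) is essential and where the argument would genuinely fail for a non-free group.

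Having established $F(\alpha^n|\beta^G) = F(\alpha|\beta^G) = f(T,\alpha|\beta^G)$, the equality $f = F$ is immediate. For $F_*$: by the general inequality structure, $F_*(\alpha^n|\beta^G)$ is likewise non-increasing, and the same conditional-independence facts, fed through the relative Abramov–Rohlin identity $h(T_s, \sigma|\sF) = h(T_s,\alpha|\sF) + h(T_s,\sigma|\alpha^s \vee \sF)$ used in Proposition \ref{prop:splittingmonotone}, show $F_*(\alpha^n|\beta^G) = F_*(\alpha|\beta^G)$ for all $n$, hence $f_*(T,\alpha|\beta^G) = F_*(T,\alpha|\beta^G)$. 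Finally, to close the loop and get $F(\alpha|\beta^G) = F_*(\alpha|\beta^G)$, I would invoke (or re-derive in the relative setting) the single-generator entropy identity: for a Markov process, $h(T_{s_i}, \alpha|\beta^G) = H(\alpha \vee T_{s_i}^{-1}\alpha|\beta^G) - H(\alpha|\beta^G)$, i.e. the classical fact that a Markov chain's mean entropy equals the one-step conditional entropy $H(\alpha|T_{s_i}^{-1}\alpha)$ relative to $\beta^G$. Substituting this into the definition of $F_*$ gives exactly $F$. The minor obstacle here is confirming the relative/conditional one-step formula, but this is the standard Markov computation $H(\bigvee_{k=0}^N T_{s_i}^{-k}\alpha \mid \sF) = H(\alpha|\sF) + N\,H(\alpha \mid T_{s_i}^{-1}\alpha \vee \sF)$ (telescoping via the Markov property in the $s_i$-direction and $T_{s_i}$-invariance of $\sF$), divided by $N+1$ in the limit.
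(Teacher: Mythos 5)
Your plan follows the paper's proof essentially step for step: reduce to showing that $F(\cdot\,|\beta^G)$ is unchanged under splittings of $\alpha$ by turning the inequalities of Proposition \ref{prop:splittingmonotone} into equalities via the Markov property, obtain $F=F_*$ from the one-step identity $h(T_s,\alpha|\beta^G)=H(\alpha\vee T_s^{-1}\alpha|\beta^G)-H(\alpha|\beta^G)$, and then collapse both infima using the fact that each $\alpha^n$ is itself a (Markov) splitting of $\alpha$. The one ingredient you gloss over --- and which the paper isolates as a separate lemma proved by induction over left-connected sets --- is that the Markov conditional-independence statements survive conditioning on the full invariant $\sigma$-algebra $\beta^G$ rather than merely on a past in one direction; this is exactly where the hypothesis that $\beta$ is a coarsening of $\alpha$ is used, and it is the technical heart you correctly flagged but did not carry out.
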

In section \ref{sec:characterization}, we prove a converse: if $F(T,\alpha)=f(T,\alpha)$ then $(T,X,\mu,\alpha)$ is Markov. In order to prove the above, we will need some lemmas.

\begin{lem}\label{lem:additive}
If $\beta \le \alpha$ are partitions, $\sF_1 \subset \sF_2$ are $\sigma$-algebras and $H(\alpha|\sF_1)=H(\alpha|\sF_2)$, then $H(\beta|\sF_1)=H(\beta|\sF_2)$.
\end{lem}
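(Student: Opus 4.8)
The plan is to reduce the whole statement to the equality clause of Lemma \ref{lem:relative}, applied twice. That lemma says $H(\gamma\mid\sF_2)\le H(\gamma\mid\sF_1)$ for any partition $\gamma$, with equality if and only if $\mu(C\mid\sF_2)=\mu(C\mid\sF_1)$ $\mu$-a.e.\ for every atom $C\in\gamma$. Applying this with $\gamma=\alpha$, the hypothesis $H(\alpha\mid\sF_1)=H(\alpha\mid\sF_2)$ is exactly equivalent to the assertion that passing from $\sF_1$ to the larger $\sF_2$ does not change the conditional expectation of any atom of $\alpha$, i.e.\ $\mu(A\mid\sF_2)=\mu(A\mid\sF_1)$ a.e.\ for all $A\in\alpha$. (If $H(\alpha\mid\sF_1)=\infty$ one first reduces to the finite case by exhausting $\alpha$ by finite sub-partitions; in all our applications $H(\alpha)<\infty$ anyway, so this is a non-issue.)

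Next I would transport this from atoms of $\alpha$ to atoms of $\beta$ using $\beta\le\alpha$. Since $\beta$ is a coarsening of $\alpha$, each $B\in\beta$ equals, up to a $\mu$-null set, the disjoint union of the (at most countably many) atoms $A\in\alpha$ with $A\subseteq B$, so $\chi_B=\sum_{A\in\alpha,\,A\subseteq B}\chi_A$ $\mu$-a.e. By linearity of conditional expectation together with monotone convergence for the countable sum, $\mu(B\mid\sF_i)=\sum_{A\subseteq B}\mu(A\mid\sF_i)$ for $i=1,2$. Because $\alpha$ has only countably many atoms, the union of the exceptional null sets over all relevant $A$ is still null, so we conclude $\mu(B\mid\sF_2)=\mu(B\mid\sF_1)$ $\mu$-a.e.\ for every $B\in\beta$.

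Finally I would invoke the equality clause of Lemma \ref{lem:relative} a second time, now in the reverse direction and with $\gamma=\beta$: having just shown $\mu(B\mid\sF_2)=\mu(B\mid\sF_1)$ a.e.\ for all $B\in\beta$, that clause yields $H(\beta\mid\sF_1)=H(\beta\mid\sF_2)$, which is the claim. There is no genuine obstacle in this argument; the only points that need a little care are the bookkeeping of countably many null sets when summing over the atoms $A\subseteq B$, and — if one insists on full generality — the reduction of the equality-of-conditional-entropy statement from the possibly infinite-entropy case to the finite-entropy case.
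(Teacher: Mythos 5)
Your proof is correct and follows the same route the paper intends: the paper's one-line proof (``conditional expectation is additive'') is exactly your middle step, combined with the equality clause of Lemma \ref{lem:relative} applied once in each direction. You have simply written out the details that the paper leaves implicit.
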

\begin{proof}
This follows from the fact that conditional expectation is additive.
\end{proof}

\begin{lem}\label{lem:splitting1}
If $(T, X,\mu,\alpha)$ is a Markov process and $\sigma$ is a splitting of $\alpha$ then $(T, X,\mu,\sigma)$ is a Markov process.
\end{lem}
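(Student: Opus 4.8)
The plan is to reduce to the case of a single simple splitting by induction on the length of the splitting sequence, then to verify the Markov condition directly for $\sigma = \alpha \vee T_t^{-1}\beta$ with $t \in S$ and $\beta \le \alpha$. The base case ($\sigma = \alpha$) is trivial, and if $\sigma'$ is obtained from a Markov partition $\sigma$ by one simple splitting we are reduced to showing a simple splitting of a Markov partition is Markov; so assume $\sigma = \alpha \vee T_t^{-1}\beta$. What must be checked is that for every $s \in S$, $g \in G$, and $A \in \sigma$,
$$H\Big(T_{sg}^{-1}\sigma \,\big|\, \bigvee_{f \in \past(sg;g)} T_f^{-1}\sigma\Big) = H(T_{sg}^{-1}\sigma \mid T_g^{-1}\sigma).$$
Since $T_g$ preserves $\mu$ it suffices to take $g = e$, so the claim becomes $H\big(T_s^{-1}\sigma \mid \bigvee_{f \in \past(s;e)} T_f^{-1}\sigma\big) = H(T_s^{-1}\sigma \mid \sigma)$, i.e. that conditioning $T_s^{-1}\sigma$ on the full past in the $s$-direction gives no more information than conditioning on $\sigma$ alone.

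The key observation is that $\sigma$, $T_f^{-1}\sigma$ for $f \in \past(s;e)$, and $T_s^{-1}\sigma$ are all built from translates of $\alpha$ indexed by an explicit finite or infinite set of group elements, and this index set is left-connected in $\Gamma_L$ by the geometry of $\past$. Concretely, $\bigvee_{f \in \past(s;e)} T_f^{-1}\sigma$ is the $\sigma$-algebra generated by $\{T_h^{-1}\alpha : h \in P\}$ for a suitable set $P \subseteq \past(s;e)$ (enlarging each $f$ by the generator $t$ used to form $\sigma$, and by $s$ for the $T_s^{-1}\sigma$ term), and $\sigma \vee T_s^{-1}\sigma$ is generated by $\{T_h^{-1}\alpha : h \in E\}$ for a small left-connected set $E$ containing $e$ and $s$. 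The Markov property of $(T,X,\mu,\alpha)$, applied along the chain of generators connecting $P$ to $E$ — exactly as in the proof that $\beta^n \vee \alpha^m = \alpha^m$ is a splitting, using Lemma \ref{lem:splittings} and the additivity Lemma \ref{lem:additive} — forces the conditional expectation of each atom of $T_s^{-1}\sigma$ given the big algebra to agree with its conditional expectation given $\sigma$. I would organize this as: (1) express each relevant join as $\bigvee_{h \in \cdot} T_h^{-1}\alpha$ over explicit left-connected index sets; (2) invoke the Markov condition for $\alpha$ repeatedly to collapse the conditioning down one generator at a time, using Lemma \ref{lem:additive} to pass from $\alpha$ to its coarsening $\beta$ where needed; (3) read off the required equality for $\sigma$.

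The main obstacle will be step (1): bookkeeping the precise index sets and checking they are left-connected, especially handling the two subcases $t \in \{s_1,\dots,s_r\}$ versus $t \in \{s_1^{-1},\dots,s_r^{-1}\}$ and the interaction between the generator $t$ defining the splitting and the direction $s$ of the past. One must be careful that enlarging $\past(s;e)$ by applying $T_t^{-1}$ (to accommodate $T_t^{-1}\beta \le T_t^{-1}\alpha$ inside $\sigma$) stays inside a left-connected set whose only ``boundary contact'' with the complement, in the $s$-direction, is through $e$; the definition of $\past$ as the set of $f$ whose every $\Gamma_L$-path to $s$ passes through $e$ is exactly what makes this work, but the verification is where the real care lies. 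Once the index-set geometry is pinned down, the probabilistic part is a routine iteration of the Markov identity for $\alpha$ together with Lemmas \ref{lem:relative} and \ref{lem:additive}.
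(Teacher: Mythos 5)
Your overall strategy --- induct down to a single simple splitting $\sigma=\alpha\vee T_t^{-1}\beta$, then verify the Markov identity for $\sigma$ by combining the chain rule for conditional entropy with the Markov property of $\alpha$ and Lemma \ref{lem:additive} --- is the same as the paper's. But two of your reductions fail. First, the reduction to $g=e$ is only valid when $G$ is a group: in the free semigroup case right multiplication by $g$ is not an automorphism of $\Gamma_L$, and $\past(sg;g)$ is genuinely larger than $\past(s;e)g$ (for $G=\langle a\rangle\cong\N$ one has $\past(a^{n+1};a^n)=\{e,a,\dots,a^n\}$ while $\past(a;e)a^n=\{a^n\}$), so the statement for general $g$ does not follow from the statement at $g=e$.

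Second, and more seriously, your ``key observation'' in step (1) --- that every join appearing in the computation is generated by translates $T_h^{-1}\alpha$ over a left-connected index set --- is false in exactly the hard case. Write $T_f^{-1}\sigma=T_f^{-1}\alpha\vee T_{tf}^{-1}\beta$. When $s\neq t$ one has $tf\in\past(sg;g)$ for every $f\in\past(sg;g)$, so each $T_{tf}^{-1}\beta$ is absorbed and $\bigvee_{f\in\past(sg;g)}T_f^{-1}\sigma=\bigvee_{f\in\past(sg;g)}T_f^{-1}\alpha$; here your plan works and matches the paper's Case 1. But when $s=t$, taking $f=g$ produces the term $T_{tg}^{-1}\beta=T_{sg}^{-1}\beta$, which is a coarsening of the very partition $T_{sg}^{-1}\alpha$ being predicted and is \emph{not} contained in $\bigvee_{f\in\past(sg;g)}T_f^{-1}\alpha$. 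You cannot enlarge this $\beta$ to $\alpha$ (that would trivialize the conditional entropy you are computing), and no amount of left-connectedness bookkeeping removes it. The paper's Case 2 handles this by expressing $H\big(T_{sg}^{-1}\alpha\mid T_{tg}^{-1}\beta\vee\bigvee_{f\in\past(sg;g)}T_f^{-1}\alpha\big)$ as a difference of two entropies via the chain rule, applying the Markov property and Lemma \ref{lem:additive} to each piece separately, and reassembling; you need this extra computation (or something equivalent) for the case where the direction $s$ of the past coincides with the generator $t$ defining the splitting. Relatedly, the case split that matters is $s=t$ versus $s\neq t$, not $t$ positive versus $t$ negative.
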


\begin{proof}
By induction, we may assume that $\sigma$ is a simple splitting of $\alpha$. So there exists a $t\in S$ and a coarsening $\beta$ of $\alpha$ such that $\sigma=\alpha \vee T_t^{-1}\beta$. It suffices to prove that
$$H\Big(T_{sg}^{-1}\sigma ~\big|~ \bigvee_{f \in \past(sg;g)} T_f^{-1}\sigma\Big) = H(T_{sg}^{-1}\sigma ~|~ T_g^{-1}\sigma)$$
for every $s\in S$ and $g\in G$.

{\bf Case 1}. Assume $s\ne t$. Then $f \in \past(sg;g)$ implies $tf \in \past(sg;g)$. So, $\bigvee_{f \in \past(sg;g)} T_f^{-1}\sigma = \bigvee_{f \in \past(sg;g)} T_f^{-1}\alpha$. Thus,
\begin{eqnarray}
H\Big(T_{sg}^{-1}\sigma \big| \bigvee_{f \in \past(sg;g)} T_f^{-1}\sigma\Big) &=& H\Big(T_{sg}^{-1}\sigma\big|\bigvee_{f \in \past(sg;g)} T_f^{-1}\alpha\Big)\\
&=&  H\Big(T_{sg}^{-1}\alpha \big|\bigvee_{f \in \past(sg;g)} T_f^{-1}\alpha\Big)\label{eqn:ms1}\\
&& + H\Big(T_{sg}^{-1}T_t^{-1}\beta \big| T_{sg}^{-1}\alpha \vee \bigvee_{f \in \past(sg;g)} T_f^{-1}\alpha\Big).\label{eqn:ms1.5}
\end{eqnarray}
For the first summand, note that since $tg \in \past(sg;g)$ and $\beta \le \alpha$,
\begin{eqnarray*}
H(T_{sg}^{-1}\alpha | T_g^{-1}\alpha) &\ge& H(T_{sg}^{-1}\alpha | T_g^{-1}\alpha \vee T_{tg}^{-1}\beta)\\
&\ge&  H\Big(T_{sg}^{-1}\alpha \big|\bigvee_{f \in \past(sg;g)} T_f^{-1}\alpha\Big) = H(T_{sg}^{-1}\alpha | T_g^{-1}\alpha).
\end{eqnarray*}
Hence
\begin{eqnarray}\label{eqn:ms2}
H\Big(T_{sg}^{-1}\alpha \big|\bigvee_{f \in \past(sg;g)} T_f^{-1}\alpha\Big) = H(T_{sg}^{-1}\alpha | T_g^{-1}\alpha \vee T_{tg}^{-1}\beta).
\end{eqnarray}
 For the second summand above, note that $\{g,tg\} \subset \past(sg;g)$ and $\{sg\} \cup  \past(sg;g) \subset \past(tsg;sg)$. Hence,
\begin{eqnarray*}
H(T_{tsg}^{-1}\alpha | T_{sg}^{-1}\alpha) &\ge& H(T_{tsg}^{-1}\alpha | T_{sg}^{-1}\alpha \vee T_{tg}^{-1}\beta \vee T_g^{-1}\alpha)\\
&\ge &H\Big(T_{tsg}^{-1}\alpha \big|T_{sg}^{-1}\alpha \vee \bigvee_{f \in \past(sg;g)} T_f^{-1}\alpha\Big)\\
&\ge& H\Big(T_{tsg}^{-1}\alpha \big|  \bigvee_{f \in \past(tsg;sg)}  T_f^{-1}\alpha\Big) = H(T_{tsg}^{-1}\alpha | T_{sg}^{-1}\alpha) .
\end{eqnarray*}
Hence
$$H\Big(T_{tsg}^{-1}\alpha ~\big|~ T_{sg}^{-1}\alpha \vee \bigvee_{f \in \past(sg;g)} T_f^{-1}\alpha\Big) = H(T_{tsg}^{-1}\alpha | T_{sg}^{-1}\alpha \vee T_{tg}^{-1}\beta \vee T_g^{-1}\alpha).$$
By the previous lemma this implies
\begin{eqnarray}\label{eqn:ms3}
H\Big(T_{tsg}^{-1}\beta ~\big|~ T_{sg}^{-1}\alpha \vee \bigvee_{f \in \past(sg;g)} T_f^{-1}\alpha\Big) = H(T_{tsg}^{-1}\beta | T_{sg}^{-1}\alpha \vee T_{tg}^{-1}\beta \vee T_g^{-1}\alpha).
\end{eqnarray}
Equations \ref{eqn:ms1}, \ref{eqn:ms1.5}, \ref{eqn:ms2} and \ref{eqn:ms3} imply
\begin{eqnarray*}
H\Big(T_{sg}^{-1}\sigma ~\big|~ \bigvee_{f \in \past(sg;g)} T_f^{-1}\sigma\Big) &=&H(T_{sg}^{-1}\alpha | T_g^{-1}\alpha \vee T_{tg}^{-1}\beta) + H(T_{tsg}^{-1}\beta | T_{sg}^{-1}\alpha \vee T_{tg}^{-1}\beta \vee T_g^{-1}\alpha)\\
&=& H(T_{sg}^{-1}\alpha \vee T_{tsg}^{-1}\beta | T_g^{-1}\alpha \vee T_{tg}^{-1}\beta)=H(T_{sg}^{-1}\sigma|T_g^{-1}\sigma).
\end{eqnarray*}

{\bf Case 2}. Assume $s= t$. Then
\begin{eqnarray*}
\bigvee_{f \in \past(sg;g)} T_f^{-1}\sigma &=& \bigvee_{f \in \past(sg;g)} T_f^{-1}\alpha \vee T_f^{-1}T_t^{-1}\beta = T_{tg}^{-1}\beta \vee \bigvee_{f \in \past(sg;g)} T_f^{-1}\alpha.
\end{eqnarray*}
Since $\beta \le \alpha$,
\begin{eqnarray}
H\Big(T_{sg}^{-1}\sigma \big| \bigvee_{f \in \past(sg;g)} T_f^{-1}\sigma\Big) &=& H\Big(T_{sg}^{-1}\sigma\big| T_{tg}^{-1}\beta \vee \bigvee_{f \in \past(sg;g)} T_f^{-1}\alpha\Big)\\
&=&  H\Big(T_{sg}^{-1}\alpha \big| T_{tg}^{-1}\beta \vee \bigvee_{f \in \past(sg;g)} T_f^{-1}\alpha\Big)\label{eqn:ms4}\\
&& + H\Big(T_{tsg}^{-1}\beta\big|T_{sg}^{-1}\alpha \vee \bigvee_{f \in \past(sg;g)} T_f^{-1}\alpha\Big).\label{eqn:ms4.5}
\end{eqnarray}
We claim that
\begin{eqnarray}\label{eqn:ms5}
H\Big(T_{tsg}^{-1}\beta \big| T_{sg}^{-1}\alpha \vee \bigvee_{f \in \past(sg;g)} T_f^{-1}\alpha\Big) = H(T_{tsg}^{-1}\beta | T_{sg}^{-1}\alpha \vee T_{tg}^{-1}\beta \vee T_g^{-1}\alpha).
\end{eqnarray}
This follows from the same argument used to prove equation \ref{eqn:ms3} except in one detail: $\{g,tg\} \nsubseteq \past(sg;g)$ this time. However, since $s=t$ and $\beta \le \alpha$,  it is still true that
$$H(T_{tsg}^{-1}\alpha | T_{sg}^{-1}\alpha \vee T_{tg}^{-1}\beta \vee T_g^{-1}\alpha)
\ge H\Big(T_{tsg}^{-1}\alpha \big|T_{sg}^{-1}\alpha \vee \bigvee_{f \in \past(sg;g)} T_f^{-1}\alpha\Big).$$
The rest of the proof of equation \ref{eqn:ms5} is the same as the proof of equation \ref{eqn:ms3}.  Next,
\begin{eqnarray}
H\Big(T_{sg}^{-1}\alpha \big| T_{tg}^{-1}\beta \vee \bigvee_{f \in \past(sg;g)} T_f^{-1}\alpha\Big) &=& H\Big(T_{sg}^{-1}\alpha \big| \bigvee_{f \in \past(sg;g)} T_f^{-1}\alpha\Big)\\
&& -H\Big( T_{tg}^{-1}\beta \big|  \bigvee_{f \in \past(sg;g)} T_f^{-1}\alpha\Big)\\
&=&  H(T_{sg}^{-1}\alpha | T_g^{-1}\alpha) - H( T_{tg}^{-1}\beta |T_g^{-1}\alpha)\\
&=& H(T_{sg}^{-1}\alpha| T_g^{-1}\alpha \vee T_{tg}^{-1}\beta).\label{eqn:ms6}
\end{eqnarray}
The second equality above follows from the previous lemma and the fact that $$H\Big( T_{tg}^{-1}\alpha\big|  \bigvee_{f \in \past(sg;g)} T_f^{-1}\alpha\Big) = H( T_{tg}^{-1}\alpha|  T_g^{-1}\alpha).$$ The third equality uses that $s=t$ so $T_{sg}^{-1}\alpha \ge T_{tg}^{-1}\beta$.
Equations \ref{eqn:ms4}, \ref{eqn:ms4.5}, \ref{eqn:ms5} and \ref{eqn:ms6} imply
\begin{eqnarray*}
H\Big(T_{sg}^{-1}\sigma ~\big|~ \bigvee_{f \in \past(sg;g)} T_f^{-1}\sigma\Big) &=&H(T_{sg}^{-1}\alpha | T_g^{-1}\alpha \vee T_{tg}^{-1}\beta) + H(T_{tsg}^{-1}\beta | T_{sg}^{-1}\alpha \vee T_{tg}^{-1}\beta \vee T_g^{-1}\alpha)\\
&=& H(T_{sg}^{-1}\alpha \vee T_{tsg}^{-1}\beta | T_g^{-1}\alpha \vee T_{tg}^{-1}\beta)=H(T_{sg}^{-1}\sigma|T_g^{-1}\sigma).
\end{eqnarray*}
\end{proof}

\begin{lem}
Let $(T,X,\mu,\alpha)$ be a Markov process. Let $\beta \le \alpha.$ Let $\beta^G$ be the smallest $G$-invariant $\sigma$-algebra containing the atoms of $\beta$. Then for every $s\in S$ and $g\in G$,
$$H\Big(T_{sg}^{-1}\alpha \big|  \bigvee_{f \in \past(sg;g)} T_f^{-1}\alpha \vee \beta^G\Big) = H(T_{sg}^{-1}\alpha | T_g^{-1}\alpha \vee \beta^G).$$
\end{lem}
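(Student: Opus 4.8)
The plan is to reduce the asserted equality to the ordinary (single‑direction) Markov property of a suitably enlarged partition, and then recover the $G$‑invariant algebra $\beta^G$ as a martingale limit. Note first that one inequality is free: since $g\in\past(sg;g)$, the conditioning $\sigma$-algebra on the left of the claimed identity contains $T_g^{-1}\alpha\vee\beta^G$, so conditioning on it cannot increase the conditional entropy. Hence it suffices to prove equality, and I would fix the $s\in S$, $g\in G$ of the statement and argue as follows (recall $H(\alpha)<\infty$ is in force throughout this section).

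Fix finite right‑connected sets $F_n\subset G$ with $e\in F_n$ and $\bigcup_n F_n=G$; the metric balls $B(e,n)$ work, being right‑connected. Put $\sigma_n:=\alpha\vee\bigvee_{h\in F_n}T_h^{-1}\beta$. Since $\beta\le\alpha$ and $F_n$ is finite, right‑connected and contains $e$, Lemma \ref{lem:splittings} shows $\sigma_n$ is a splitting of $\alpha$; also $H(\sigma_n)\le(1+|F_n|)H(\alpha)<\infty$. By Lemma \ref{lem:splitting1} the process $(T,X,\mu,\sigma_n)$ is again Markov, so applying its Markov property with the same $s$ and $g$ gives
$$H\Big(T_{sg}^{-1}\sigma_n\ \Big|\ \bigvee_{f\in\past(sg;g)}T_f^{-1}\sigma_n\Big)=H\big(T_{sg}^{-1}\sigma_n\ \big|\ T_g^{-1}\sigma_n\big).$$
Now $T_g^{-1}\sigma_n$ is contained in $\bigvee_{f\in\past(sg;g)}T_f^{-1}\sigma_n$ (again because $g\in\past(sg;g)$) and $T_{sg}^{-1}\alpha$ is a coarsening of $T_{sg}^{-1}\sigma_n$, so Lemma \ref{lem:additive} passes the equality down to the coarsening:
$$H\Big(T_{sg}^{-1}\alpha\ \Big|\ \bigvee_{f\in\past(sg;g)}T_f^{-1}\sigma_n\Big)=H\big(T_{sg}^{-1}\alpha\ \big|\ T_g^{-1}\sigma_n\big).$$

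To finish I would let $n\to\infty$. Unwinding the definitions, $T_g^{-1}\sigma_n=T_g^{-1}\alpha\vee\bigvee_{h\in F_n}T_{hg}^{-1}\beta$, while $\bigvee_{f\in\past(sg;g)}T_f^{-1}\sigma_n=\big(\bigvee_{f\in\past(sg;g)}T_f^{-1}\alpha\big)\vee\bigvee_{h\in F_n,\,f\in\past(sg;g)}T_{hf}^{-1}\beta$. The only combinatorial point is that $\bigcup_n\{hg:h\in F_n\}=G$ and $\bigcup_n\{hf:h\in F_n,\ f\in\past(sg;g)\}=G$, the latter because $g\in\past(sg;g)$ and any $k\in G$ equals $(kg^{-1})g$. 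Hence, as $n\to\infty$, these two conditioning $\sigma$-algebras increase to $T_g^{-1}\alpha\vee\beta^G$ and $\big(\bigvee_{f\in\past(sg;g)}T_f^{-1}\alpha\big)\vee\beta^G$ respectively. Since $H(T_{sg}^{-1}\alpha)=H(\alpha)<\infty$, the martingale convergence theorem for the conditional information function (Chung's maximal lemma together with dominated convergence; cf. [Gl03]) gives convergence of the corresponding conditional entropies, so the last displayed identity passes to the limit and becomes exactly the assertion of the lemma.

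The one genuinely new idea is the first move: absorbing finite pieces $\bigvee_{h\in F_n}T_h^{-1}\beta$ of the $G$‑invariant algebra $\beta^G$ into the partition via the splitting lemmas, so that the single‑direction Markov property of $\sigma_n$ already encodes the desired conditional independence, with Lemma \ref{lem:additive} peeling the surplus information back off afterwards. Everything after that is bookkeeping of translates plus a standard martingale limit, and the place to be careful is precisely that limit passage — verifying the translated index sets exhaust $G$ and that the conditional entropies (not merely the conditional expectations) converge, which is where the finite‑entropy hypothesis $H(\alpha)<\infty$ is used.
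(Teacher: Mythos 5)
Your argument is correct, and it is a genuinely different (and shorter) route than the paper's. The paper proves the lemma directly: it sandwiches the two sides between joins of the form $\bigvee_{f\in\past(sg;g)}T_f^{-1}\alpha\vee\bigvee_{f\notin\past(sg;g)}T_f^{-1}\beta$, reduces to finite left-connected subsets of $G-\past(sg;g)$ containing $sg$, and runs a fresh induction on the size of such a set, using the Markov property of $\alpha$ at each step together with lemma \ref{lem:additive}. You instead outsource all of the inductive work to results already in hand: lemma \ref{lem:splittings} realizes $\sigma_n=\alpha\vee\bigvee_{h\in F_n}T_h^{-1}\beta$ as a splitting, lemma \ref{lem:splitting1} says splittings of Markov processes are Markov, lemma \ref{lem:additive} peels the statement back down to $T_{sg}^{-1}\alpha$, and a martingale limit in $n$ finishes. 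This is an economical reuse of the machinery (the induction is hidden inside the proof of lemma \ref{lem:splitting1} rather than repeated), at the cost of invoking the continuity of $H(\gamma|\cdot)$ along increasing $\sigma$-algebras --- which you correctly justify via the maximal inequality and $H(\alpha)<\infty$, and which the paper's own reduction from finite $F$ to the full join also needs.

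One detail to repair: your identification of the limit algebras uses $k=(kg^{-1})g$, which is only available when $G$ is a group. In the semigroup case $\{hg:h\in G\}=Gg$ is a proper subset of $G$, so $T_g^{-1}\sigma_n$ increases only to $T_g^{-1}\alpha\vee\bigvee_{k\in Gg}T_k^{-1}\beta$, which may be strictly smaller than $T_g^{-1}\alpha\vee\beta^G$. This does not break the proof: your limit identity then reads
$$H\Big(T_{sg}^{-1}\alpha\,\Big|\,\bigvee_{f\in\past(sg;g)}T_f^{-1}\alpha\vee\beta^G\Big)=H\Big(T_{sg}^{-1}\alpha\,\Big|\,T_g^{-1}\alpha\vee\bigvee_{k\in Gg}T_k^{-1}\beta\Big)\ \ge\ H\big(T_{sg}^{-1}\alpha\,\big|\,T_g^{-1}\alpha\vee\beta^G\big),$$
and combined with the ``free'' inequality from your first paragraph this yields equality. (The past-side limit is unaffected, since $e\in\past(sg;g)$ in the semigroup case.) You should also note that $e\in\past(sg;g)$ can fail in the group case when $|sg|=|g|-1$, so the group and semigroup justifications of the two exhaustion claims are genuinely different; with those two sentences added, the proof covers both cases treated in the paper.
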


\begin{proof}
Since
\begin{eqnarray*}
H\Big(T_{sg}^{-1}\alpha\big| \bigvee_{f \in \past(sg;g)} T_f^{-1}\alpha \vee \bigvee_{f \notin \past(sg;g)} T_f^{-1}\beta\Big)&=&H\Big(T_{sg}^{-1}\alpha \big|  \bigvee_{f \in \past(sg;g)} T_f^{-1}\alpha \vee \beta^G\Big)\\
&\le& H(T_{sg}^{-1}\alpha | T_g^{-1}\alpha \vee \beta^G)\\
&\le& H\Big(T_{sg}^{-1} \alpha \big| T_g^{-1}\alpha \vee \bigvee_{f \notin \past(sg;g)} T_f^{-1}\beta\Big),
\end{eqnarray*}
it suffices to show that
$$H\Big(T_{sg}^{-1}\alpha\big| \bigvee_{f \in \past(sg;g)} T_f^{-1}\alpha \vee \bigvee_{f \notin \past(sg;g)} T_f^{-1}\beta\Big) = H\Big(T_{sg}^{-1} \alpha \big| T_g^{-1}\alpha \vee \bigvee_{f \notin \past(sg;g)} T_f^{-1}\beta\Big).$$

To prove this, it suffices to show that for every left-connected (definition \ref{defn:leftcayley}) finite set $F \subset G-\past(sg;g)$ with $sg\in F$,
$$H\Big(T_{sg}^{-1}\alpha\big| \bigvee_{f \in \past(sg;g)} T_f^{-1}\alpha \vee \bigvee_{f \in F} T_f^{-1}\beta\Big) = H\Big(T_{sg}^{-1} \alpha \big| T_g^{-1}\alpha \vee \bigvee_{f \in F} T_f^{-1}\beta\Big).$$
Equivalently,
\begin{eqnarray*}
&&H\Big(T_{sg}^{-1}\alpha\vee \bigvee_{f \in F} T_f^{-1}\beta\big| \bigvee_{f \in \past(sg;g)} T_f^{-1}\alpha \Big) - H\Big( \bigvee_{f \in F} T_f^{-1}\beta\big| \bigvee_{f \in \past(sg;g)} T_f^{-1}\alpha \Big)\\
&&=H\Big(T_{sg}^{-1}\alpha\vee \bigvee_{f \in F} T_f^{-1}\beta\big|T_g^{-1}\alpha \Big) - H\Big( \bigvee_{f \in F} T_f^{-1}\beta\big| T_g^{-1}\alpha \Big)
\end{eqnarray*}
Thus, it suffices to prove the following two statements:
\begin{eqnarray*}
H\Big(T_{sg}^{-1}\alpha\vee \bigvee_{f \in F} T_f^{-1}\beta\big| \bigvee_{f \in \past(sg;g)} T_f^{-1}\alpha \Big) &=&H\Big(T_{sg}^{-1}\alpha\vee \bigvee_{f \in F} T_f^{-1}\beta\big|T_g^{-1}\alpha \Big)\\
H\Big( \bigvee_{f \in F} T_f^{-1}\beta\big| \bigvee_{f \in \past(sg;g)} T_f^{-1}\alpha \Big) &=& H\Big( \bigvee_{f \in F} T_f^{-1}\beta\big| T_g^{-1}\alpha \Big).
\end{eqnarray*}
By lemma \ref{lem:additive}, it suffices to prove
\begin{eqnarray}\label{eqn:mc1}
H\Big( \bigvee_{f \in F} T_f^{-1}\alpha\big| \bigvee_{f \in \past(sg;g)} T_f^{-1}\alpha \Big) = H\Big( \bigvee_{f \in F} T_f^{-1}\alpha\big| T_g^{-1}\alpha \Big).
 \end{eqnarray}
 We will prove this by induction on $|F|$. If $|F|=1$, then this follows immediately from the definition of Markov processes. If $|F|>1$ then there exists $f_0 \in F$ and $t\in S$ such that $tf_0 \in F$, $tf_0\ne sg$ and $F':=F-\{tf_0\}$ is left-connected. So,
 \begin{eqnarray}\label{eqn:mc2}
 H\Big( \bigvee_{f \in F} T_f^{-1}\alpha\big| \bigvee_{f \in \past(sg;g)} T_f^{-1}\alpha \Big) &=& H\Big( \bigvee_{f \in F'} T_f^{-1}\alpha\big| \bigvee_{f \in \past(sg;g)} T_f^{-1}\alpha \Big)\\
 &&+ H\Big( T_{tf_0}^{-1}\alpha\big|  \bigvee_{f \in F'} T_f^{-1}\alpha \vee \bigvee_{f \in \past(sg;g)} T_f^{-1}\alpha \Big).
 \end{eqnarray}
 By induction,
  \begin{eqnarray}\label{eqn:mc3}
 H\Big( \bigvee_{f \in F'} T_f^{-1}\alpha\big| \bigvee_{f \in \past(sg;g)} T_f^{-1}\alpha \Big) = H\Big( \bigvee_{f \in F'} T_f^{-1}\alpha\big| T_g^{-1}\alpha \Big).
 \end{eqnarray}
Since $F' \cup \past(sg;g) \subset \past(tf_0;f_0)$,
 \begin{eqnarray}
 H\Big( T_{tf_0}^{-1}\alpha\big|  \bigvee_{f \in \past(tf_0;f_0)} T_f^{-1}\alpha \Big)
 &\le& H\Big( T_{tf_0}^{-1}\alpha\big|  \bigvee_{f \in F'} T_f^{-1}\alpha \vee \bigvee_{f \in \past(sg;g)} T_f^{-1}\alpha \Big) \\
 &\le& H\Big( T_{tf_0}^{-1}\alpha\big|  \bigvee_{f \in F' \cup \{g\}} T_f^{-1}\alpha \Big) \\
 &\le& H( T_{tf_0}^{-1}\alpha|  T_{f_0}^{-1}\alpha ) \\
&=& H\Big( T_{tf_0}^{-1}\alpha\big|  \bigvee_{f \in \past(tf_0;f_0)} T_f^{-1}\alpha \Big).\label{eqn:mc4}
 \end{eqnarray}
So equality holds throughout. Equations \ref{eqn:mc2}, \ref{eqn:mc3} and \ref{eqn:mc4} imply
\begin{eqnarray*}
 H\Big( \bigvee_{f \in F} T_f^{-1}\alpha\big| \bigvee_{f \in \past(sg;g)} T_f^{-1}\alpha \Big)&=& H\Big( \bigvee_{f \in F'} T_f^{-1}\alpha\big| T_g^{-1}\alpha \Big)+H\Big( T_{tf_0}^{-1}\alpha\big| \bigvee_{f \in F' \cup \{g\}} T_f^{-1}\alpha\Big)\\
 &=& H\Big(\bigvee_{f \in F} T_f^{-1}\alpha \big| T_g^{-1}\alpha\Big).
 \end{eqnarray*}
This proves equation \ref{eqn:mc1} and hence finishes the lemma.
\end{proof}

To simplify notation, we write $F(\alpha|\sF)$ for $F(T,\alpha|\sF)$ when $T$ is clear. Similar statements apply to $f(\alpha|\sF)$, $F_*(\alpha|\sF)$, etc.
\begin{lem}\label{lem:f=F}
Let $(T,X,\mu,\alpha)$ be a Markov process. Let $\beta$ be a coarsening of $\alpha$. Then for any splitting $\sigma$ of $\alpha$, $F(\alpha|\beta^G)=F(\sigma|\beta^G)$.
\end{lem}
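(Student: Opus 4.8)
The plan is to reduce the statement to the case of a simple splitting, just as in Lemma~\ref{lem:splitting1} and Proposition~\ref{prop:splittingmonotone}, and then to trace through the inequalities in the proof of Proposition~\ref{prop:splittingmonotone} to see that they all become equalities when $(T,X,\mu,\alpha)$ is Markov. By induction on the length of the sequence of simple splittings defining $\sigma$, it suffices to prove $F(\alpha|\beta^G)=F(\sigma|\beta^G)$ when $\sigma=\alpha\vee T_t^{-1}\gamma$ is a simple splitting, where $t\in S$ and $\gamma\le\alpha$; note that by Lemma~\ref{lem:splitting1} every intermediate partition in the chain is again Markov, so the inductive step is legitimate.

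First I would record the identity used in Proposition~\ref{prop:splittingmonotone} (with $\sF=\beta^G$ and $\sigma=\alpha\vee T_t^{-1}\gamma$): after expanding $F(\sigma|\beta^G)-F(\alpha|\beta^G)$ via Lemma~\ref{lem:relative} one gets a sum of terms of the form $H(\sigma\vee T_{s_i}^{-1}\sigma\mid \alpha\vee T_{s_i}^{-1}\alpha\vee\beta^G)$, $H(\sigma\mid\alpha\vee\beta^G)$, etc., and the term with $s_i=t$ vanishes because $\alpha\vee T_t^{-1}\alpha$ refines $\sigma$. The remaining task is to show that each of the inequalities
$$H(\sigma\mid\alpha\vee T_{s_i}^{-1}\alpha\vee\beta^G)\le H(\sigma\mid\alpha\vee\beta^G),\qquad H(T_{s_i}^{-1}\sigma\mid\alpha\vee T_{s_i}^{-1}\alpha\vee\beta^G)\le H(\sigma\mid\alpha\vee\beta^G)$$
that were used to conclude $F(\sigma|\beta^G)\le F(\alpha|\beta^G)$ are in fact equalities in the Markov case. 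The key input is the preceding unnamed lemma (the one immediately above this statement), which says that for a Markov process, conditioning on $\bigvee_{f\in\past(sg;g)}T_f^{-1}\alpha\vee\beta^G$ is the same as conditioning on $T_g^{-1}\alpha\vee\beta^G$. Taking $g=e$ and $g=s_i$ and using that $T_t^{-1}\gamma$, $T_{s_i}^{-1}T_t^{-1}\gamma$, etc., are measurable with respect to suitable joins of translates of $\alpha$ lying in the relevant pasts, one can upgrade the submodularity inequalities to equalities; Lemma~\ref{lem:additive} is then used to pass from statements about $\alpha$ to the corresponding statements about the coarsening $\gamma$, exactly as equations~\ref{eqn:ms3} and~\ref{eqn:ms5} were derived.

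Concretely, the argument will mirror Lemma~\ref{lem:splitting1}, splitting into Case~1 ($s_i\ne t$) and Case~2 ($s_i=t$), but now carrying the extra sub-$\sigma$-algebra $\beta^G$ through; since $\beta^G$ is $G$-invariant it commutes with all the translation operators and only enters passively, so the combinatorial heart is identical to the proof of Lemma~\ref{lem:splitting1}. In fact the cleanest route may be to not re-run the case analysis at all, but simply to observe: by Proposition~\ref{prop:splittingmonotone} we already know $F(\sigma|\beta^G)\le F(\alpha|\beta^G)$; for the reverse inequality it suffices to show $F_*(\alpha|\beta^G)\le F(\sigma|\beta^G)$ together with the easier bound $F_*\le F$ applied appropriately --- however this risks circularity with Theorem~\ref{thm:f=F}, so I would instead stick with the direct computation. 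The main obstacle is bookkeeping: one must check that every place where Proposition~\ref{prop:splittingmonotone} applied a strict-in-general inequality ($H(\cdot\mid\text{larger})\le H(\cdot\mid\text{smaller})$), the two conditioning $\sigma$-algebras actually induce the same conditional expectation on the relevant atoms, and this requires identifying each such inequality with an instance of the Markov identity from the lemma above (or of Lemma~\ref{lem:additive} applied to it). Once that dictionary is set up, the conclusion $F(\alpha|\beta^G)=F(\sigma|\beta^G)$ is immediate, and an induction on the number of simple splittings finishes the general case.
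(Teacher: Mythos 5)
Your plan is correct and rests on the same two pillars as the paper's proof: reduction to a simple splitting (with Lemma~\ref{lem:splitting1} guaranteeing each intermediate partition is again Markov, so the induction is legitimate), and the unnamed lemma immediately preceding the statement, which upgrades the Markov property to its version relative to $\beta^G$. Where you diverge is in the execution, and the divergence matters mainly because the paper has a shortcut that removes almost all of the bookkeeping you flag as the main obstacle. Since $\alpha\le\sigma\le\alpha\vee T_t^{-1}\alpha$ and $\alpha\vee T_t^{-1}\alpha=\sigma\vee T_t^{-1}\alpha$ is itself a simple splitting of $\sigma$, Proposition~\ref{prop:splittingmonotone} sandwiches $F(\sigma|\beta^G)$ between $F(\alpha\vee T_t^{-1}\alpha|\beta^G)$ and $F(\alpha|\beta^G)$, so it suffices to prove equality for the single extreme splitting $\sigma=\alpha\vee T_t^{-1}\alpha$; after a further reduction of $t$ to a positive generator by $G$-invariance, the coarsening $\gamma$ never enters, Lemma~\ref{lem:additive} is not needed, and the Case~1/Case~2 dichotomy of Lemma~\ref{lem:splitting1} collapses to one short chain-rule computation in which each term is killed by the relative Markov lemma. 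Your more direct route --- showing that every inequality actually used in the proof of Proposition~\ref{prop:splittingmonotone} becomes an equality for a general simple splitting $\alpha\vee T_t^{-1}\gamma$ --- does close: for each generator $s_i$ with $s_i\ne t^{\pm1}$ the term $H(\sigma\vee T_{s_i}^{-1}\sigma|\alpha\vee T_{s_i}^{-1}\alpha\vee\beta^G)$ collapses to $2H(\sigma|\alpha\vee\beta^G)$ via conditional independence (an instance of the relative Markov lemma over the left-connected past of $ts_i$ through $s_i$, transferred from $\alpha$ to $\gamma$ by Lemma~\ref{lem:additive} and then squeezed), and the exceptional generator contributes exactly $H(\sigma|\alpha\vee\beta^G)$, so the total correction is $(1-2r)+2(r-1)+1=0$. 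The one caution is that you must mirror the version of Proposition~\ref{prop:splittingmonotone} appropriate to the sign of $t$ (the paper only writes out $t\in\{s_1,\ldots,s_r\}$): when $t=s_j^{-1}$ the roles of the two sub-terms at the exceptional index $j$ swap, and the inequality $H(T_{s_j}^{-1}\sigma|\alpha\vee T_{s_j}^{-1}\alpha\vee\beta^G)\le H(T_{s_j}^{-1}\sigma|T_{s_j}^{-1}\alpha\vee\beta^G)$ would be genuinely strict there, so it must not be the one you try to upgrade. This is exactly the kind of sign-chasing the paper's sandwich avoids.
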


\begin{proof}
By lemma \ref{lem:splitting1} it suffices to consider the special case in which $\sigma$ is a simple splitting of $\alpha$. So there is a $t \in S$ such that $\alpha \le \sigma \le \alpha \vee T_t^{-1}\alpha$. By proposition \ref{prop:splittingmonotone}, $F(\alpha|\beta^G) \le F(\sigma|\beta^G) \le F(\alpha \vee T_t^{-1}\alpha|\beta^G)$. Hence it suffices to show that $F(\alpha|\beta^G)=F(\alpha \vee T_t^{-1}\alpha|\beta^G)$. If $G$ is a group rather than a semigroup then by $G$-invariance,
$$F(\alpha\vee T_t^{-1}\alpha|\beta^G) = F(\alpha\vee T_{t^{-1}}^{-1}\alpha|\beta^G).$$
So, without loss of generality, we may assume that $t =s_r \in S$.

We claim that
\begin{eqnarray}\label{eqn:mf1}
F(\alpha \vee T_t^{-1}\alpha|\beta^G) &=& F(\alpha|\beta^G) + (1-2r)H(T_t^{-1}\alpha|\alpha \vee \beta^G)\\
&& + \sum_{i=1}^r H( T_t^{-1}\alpha  \vee T_{s_i}^{-1}T_t^{-1}\alpha|\beta^G \vee  \alpha \vee T_{s_i}^{-1}\alpha).
\end{eqnarray}
To see this, in the formula for $F(\alpha \vee T_t^{-1}\alpha |\beta^G)$, replace $H(\alpha \vee t\alpha|\beta^G)$ with $H(\alpha|\beta^G) + H( T_t^{-1}\alpha |\alpha \vee \beta^G)$ and for each $s \in \{s_1,\ldots,s_r\}$ replace $$H(\alpha \vee T_t^{-1}\alpha \vee T_s^{-1}\alpha \vee T_s^{-1}T_t^{-1}\alpha|\beta^G)$$ with  $$H(\alpha \vee T_s^{-1}\alpha|\beta^G) + H( T_t^{-1}\alpha  \vee T_s^{-1}T_t^{-1}\alpha|\beta^G \vee  \alpha \vee T_s^{-1}\alpha).$$
Collecting terms implies the claim.

Note that for any $s\in S$,
\begin{eqnarray}
H( T_t^{-1}\alpha  \vee T_{s}^{-1}T_t^{-1}\alpha|\beta^G \vee  \alpha \vee T_{s}^{-1}\alpha)&=& H( T_t^{-1}\alpha |\beta^G \vee  \alpha \vee T_{s}^{-1}\alpha)\\
&& + H( T_{s}^{-1}T_t^{-1}\alpha | \beta^G \vee  \alpha \vee T_{s}^{-1}\alpha \vee  T_t^{-1}\alpha).
\end{eqnarray}
If $s=t=s_r$ then the above quantity equals $0 + H( T_{t^2}^{-1}\alpha | \beta^G \vee  \alpha \vee T_{t}^{-1}\alpha).$  By the previous lemma, this equals $H( T_{t^2}^{-1}\alpha | \beta^G \vee T_{t}^{-1}\alpha) = H( T_{t}^{-1}\alpha | \beta^G \vee \alpha).$ Now substitute this into equation \ref{eqn:mf1} to obtain
\begin{eqnarray}\label{eqn:mf2}
&&F(\alpha \vee T_t^{-1}\alpha|\beta^G) - F(\alpha|\beta^G)\\
&&  = \sum_{i=1}^{r-1} H( T_t^{-1}\alpha  \vee T_{s_i}^{-1}T_t^{-1}\alpha|\beta^G \vee  \alpha \vee T_{s_i}^{-1}\alpha) - 2H(T_t^{-1}\alpha|\alpha \vee \beta^G).
\end{eqnarray}
If $s\ne t$ then the previous lemma implies
\begin{eqnarray*}
H( T_t^{-1}\alpha  \vee T_{s}^{-1}T_t^{-1}\alpha|\beta^G \vee  \alpha \vee T_{s}^{-1}\alpha)&=& H( T_t^{-1}\alpha |\beta^G \vee  \alpha \vee T_{s}^{-1}\alpha)\\
&& + H( T_{s}^{-1}T_t^{-1}\alpha | \beta^G \vee  \alpha \vee T_{s}^{-1}\alpha \vee  T_t^{-1}\alpha)\\
&=&  H( T_t^{-1}\alpha |\beta^G \vee  \alpha) +H( T_{s}^{-1}T_t^{-1}\alpha | \beta^G \vee   T_{s}^{-1}\alpha)\\
&=& 2H(T_t^{-1}\alpha|\alpha \vee \beta^G).
\end{eqnarray*}
Equation \ref{eqn:mf2} now implies $F(\alpha \vee T_t^{-1}\alpha|\beta^G) = F(\alpha|\beta^G)$ as claimed.
\end{proof}

We can now prove theorem \ref{thm:f=F}.

\begin{proof}[Proof of theorem \ref{thm:f=F}]
This first equality follows from the previous lemma and definition \ref{defn:f}. To prove the second equality, first note that for any $s\in S$, 
$$h(T_s, \alpha|\beta^G) = \lim_{n\to\infty} H\Big(T_s^{-n-1}\alpha |\beta^G \vee \bigvee_{i=0}^n T_s^{-i}\alpha\Big).$$
Hence,
\begin{eqnarray*}
H(\alpha \vee T_s^{-1} \alpha|\beta^G) -H(\alpha|\beta^G) &=& H(T_s^{-1}\alpha |\alpha \vee \beta^G)\\
&=& H(T_s^{-n-1}\alpha |T_s^{-n}\alpha \vee \beta^G)\\
&\ge& H\Big(T_s^{-n-1}\alpha \big| \beta^G \vee \bigvee_{i=0}^n T_s^{-i}\alpha\Big)\\
&\ge& H\Big(T_s^{-n-1}\alpha \big| \beta^G \vee \bigvee_{f \in \past(s^{n+1};s^n)} T_f^{-1}\alpha\Big)\\
&=& H(T_s^{-n-1}\alpha |T_s^{-n}\alpha \vee \beta^G) = H(T_s^{-1}\alpha|\alpha \vee \beta^G).
\end{eqnarray*}
Thus, equality holds throughout. Hence
$$h(T_s, \alpha|\beta^G) =H(\alpha \vee T_s^{-1} \alpha|\beta^G) -H(\alpha|\beta^G).$$
We now have
\begin{eqnarray*}
F_*(\alpha|\beta^G) &=& (1-r)H(\alpha|\beta^G) +\sum_{i=1}^r h(T_{s_i}, \alpha|\beta^G) \\
&=& (1-2r)H(\alpha|\beta^G) +\sum_{i=1}^r H(\alpha \vee T_s^{-1}\alpha|\beta^G)= F(\alpha|\beta^G).
\end{eqnarray*}
This proves the second equality in the statement. By proposition \ref{prop:splitting}, $\alpha^n$ is a splitting of $\alpha$. By lemma \ref{lem:splitting1}, $(T,X,\mu,\alpha^n)$ is a Markov process. Thus by the above, $F_*(\alpha^n|\beta^G)=F(\alpha^n|\beta^G)=f(\alpha|\beta^G)$ for all $n\ge 0$. Take the infimum over all $n$ to see that $f_*(\alpha|\beta^G)=F(\alpha|\beta^G)$.
\end{proof}

\section{Markov Chains}\label{sec:markovchains}


The purpose of this section is to develop a constructive approach to Markov processes through transition matrices and symbolic dynamics. This will be used later to prove $f=f_*$ in general.

\subsection{The existence theorem}

\begin{defn}
An {\bf ordered process} is a quadruple $(T,X,\mu,\alpha)$ where $(T,X,\mu)$ is a $G$-system and $\alpha=(A_1,A_2,\ldots)$ is an ordered partition. Two ordered processes $(T,X,\mu,\alpha)$, $(S,Y,\nu,\beta)$ are {\bf isomorphic} (as ordered processes) if there is a measure-conjugacy $\phi:X \to Y$ that maps the $i$-th atom of $\alpha$ to the $i$-th atom of $\beta$ for all $i\ge 1$.
\end{defn}

\begin{defn}
Let $\bX=(T,X,\mu,\alpha)$ and $\bY=(U,Y,\nu,\beta)$ be two ordered processes with $\alpha=(A_1,A_2,\ldots)$ and $\beta=(B_1,B_2,\ldots)$. For $n \ge 0$ let
$$d_1(\bX,\bY)=\sum_{s \in S} \sum_{i,j = 1}^\infty \big|\mu(A_{i} \cap T_s^{-1}A_{j}) - \nu(B_{i} \cap T_s^{-1}B_{j})\big|.$$
Here we are following the convention that if, for example, $\alpha=(A_1,\ldots,A_n)$ is finite then $A_i:=\emptyset$ for all $i>n$. $d_1$ is symmetric and satisfies the triangle inequality but it is not a distance function since two nonisomorphic processes could be at distance zero from each other.
\end{defn}

The main result of this section is:

\begin{thm}\label{thm:existence}
Let $\bY=(U,Y,\nu,\beta)$ be an ordered process. Then there exists a Markov process $\bX=(T,X,\mu,\alpha)$ such that $d_1(\bX,\bY)=0$. Moreover, $\bY$ is unique up to isomorphism (as an ordered process).
\end{thm}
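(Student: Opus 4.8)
The plan is to build $\bX$ by symbolic dynamics: take $X = K^G$ where $K = \{1,2,\ldots\}$ indexes the atoms of $\beta$, let $T$ be the usual right shift action $T_g(x)(f) = x(fg)$, and let $\alpha$ be the time-$e$ partition $\alpha = \{A_k\}$ with $A_k = \{x : x(e) = k\}$. The work is entirely in constructing the measure $\mu$. The data we are handed are the ``transition'' numbers $p_s(i,j) := \nu(B_i \cap U_s^{-1}B_j)$ for each $s\in S$, together with the marginal $p(i) := \nu(B_i)$; the defining property of $d_1(\bX,\bY)=0$ is exactly that $\mu$ reproduces these. First I would record the compatibility relations these numbers satisfy because $\nu$ is $U$-invariant: $\sum_j p_s(i,j) = p(i)$, $\sum_i p_s(i,j) = p(j)$, and $p_{s^{-1}}(j,i) = p_s(i,j)$ in the group case. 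From $p_s$ and $p$ one forms the conditional kernel $M_s(i,j) = p_s(i,j)/p(i)$ (defined wherever $p(i) > 0$; atoms of measure zero can be discarded), which is a stochastic matrix, and $M_{s^{-1}}$ is its reverse with respect to the stationary vector $p$.

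The measure $\mu$ is then specified on cylinder sets indexed by finite right-connected subsets $F \subset G$ containing $e$: given $x|_F$, orient the induced subtree of the Cayley graph toward $e$ and set $\mu$ of the cylinder equal to $p(x(e))$ times the product over directed edges $g \to gs$ (with $s\in S$) pointing away from $e$ of $M_s(x(g), x(gs))$. The key steps are: (i) check this is well-defined, i.e.\ independent of the choice of spanning orientation — this uses the reversibility relation $p(i) M_s(i,j) = p(j) M_{s^{-1}}(j,i)$ and the fact that $G$ is free, so the induced subgraph on a right-connected finite set is a tree and any two orientations toward $e$ differ by flipping edges, each flip being absorbed by reversibility; (ii) check the Kolmogorov consistency conditions, namely that summing over the state at a leaf $g_0$ of $F$ recovers the value on $F - \{g_0\}$ — this is just $\sum_{k} M_s(x(f_1), k) = 1$ when $g_0$ is a child, and $\sum_k p(k) M_{s^{-1}}(k, x(g_0 s)) = \sum_k p_s(x(g_0 s),k)\cdot(\text{stuff}) = p(x(g_0 s))$ when $g_0$ is the ``root side'' case — then invoke the Kolmogorov extension theorem to get a shift-invariant Borel probability measure $\mu$ on $K^G$; (iii) verify $d_1(\bX,\bY) = 0$, which is immediate since $\mu(A_i \cap T_s^{-1}A_j)$ is by construction the $\mu$-measure of the two-point cylinder on $F = \{e,s\}$, equal to $p(i)M_s(i,j) = p_s(i,j) = \nu(B_i \cap U_s^{-1}B_j)$; (iv) verify $(T,X,\mu,\alpha)$ is a Markov process in the sense of the paper's definition, i.e.\ $\mu(T_{sg}^{-1}A \mid \bigvee_{f\in\past(sg;g)} T_f^{-1}\alpha) = \mu(T_s^{-1}A \mid \alpha)$ — this follows from the product structure of $\mu$ on cylinders: conditioning on coordinates in $\past(sg;g)$, the coordinate at $sg$ depends only on the coordinate at $g$ through the factor $M_s(x(g), x(sg))$, because $\past(sg;g)$ is separated from $sg$ by the single vertex $g$ in the tree.

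For uniqueness, I would argue that any Markov process $\bX' = (T',X',\mu',\alpha')$ with $d_1(\bX',\bY) = 0$ has the same cylinder-set measures on right-connected finite sets containing $e$: by the Markov property (applied inductively along a spanning tree, peeling off leaves exactly as in the proof of Lemma~\ref{lem:splittings}), $\mu'$ of such a cylinder factors as $\mu'(\alpha'\text{-atom at }e) \prod M'_s(\cdot,\cdot)$ over oriented edges, and the one-step quantities $\mu'(A'_i \cap (T'_s)^{-1}A'_j)$ are forced to equal $p_s(i,j)$ by $d_1 = 0$, hence $M'_s = M_s$ and the marginal is $p$; since such cylinders generate the Borel $\sigma$-algebra and determine an ordered-process isomorphism with $\bX$, we get $\bX' \cong \bX$ as ordered processes. (Note the ``$\bY$ is unique'' in the statement is presumably a typo for ``$\bX$ is unique''.)

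The main obstacle is step (i)/(ii): making the cylinder measures genuinely well-defined and consistent. The freeness of $G$ is essential — it guarantees the induced right-subgraphs on finite right-connected sets are trees, with no cycles to create inconsistency — and the reversibility identity $p(i)M_s(i,j) = p(j)M_{s^{-1}}(j,i)$ (which itself rests on the $U$-invariance of $\nu$, giving $p_{s^{-1}}(j,i) = p_s(i,j)$) is what lets the construction not depend on an arbitrary choice of root-orientation. In the semigroup case the edges are already coherently oriented and well-definedness is easier, but then one must be careful that $\past(sg;g)$ behaves correctly; the argument of Lemma~\ref{lem:splitting1}, Case 2, suggests the bookkeeping there is the delicate point. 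Once the extension theorem produces $\mu$, verifying the Markov property and $d_1 = 0$ is routine given the explicit product form.
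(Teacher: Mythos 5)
Your proposal is correct and follows essentially the same route as the paper, which defines the transition system $P^s_{ij}=\nu(U_s^{-1}B_j\mid B_i)$, $\pi_k=\nu(B_k)$, checks invariance, and then invokes the machinery of \S 6 (the product-over-tree-edges cylinder formula of lemma \ref{lem:mcformula}, the invariance lemmas resting on the reversibility identity, and corollary \ref{cor:invariant}) --- exactly the construction you spell out inline, including the same uniqueness argument. The only slip worth noting is that your cylinder formula takes the product over right-Cayley edges $g \to gs$, whereas the Markov property and the pasts are defined via the left-Cayley graph (edges $g \to sg$, matching the transition from $X_g$ to $X_{sg}$), so the product should run over left-Cayley edges to be consistent with your own step (iv).
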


\subsection{Symbolic dynamics notation}\label{sec:notation}

If $K$ is any topological space then $K^G$ denotes the set of all functions $x:G \to K$. It can also be thought of as the product space $K^G = \prod_{g \in G} K$ and hence is endowed with the product topology. In most of the applications of this paper, $K$ is either finite or countably infinite. In these cases, it is implicitly assumed that $K$ has the discrete topology and this induces the product topology on $K^G$. The {\bf canonical action} of $G$ on $K^G$ is defined by $T_gx(f)=x(fg) \forall f,g\in G, x\in K^G$. The {\bf canonical partition} of $K^G$ is $\alpha=\{A_k~|~k \in K\}$ where $A_k=\{x \in K^G | x(e)=k\}$.

A measure $\mu$ on $K^G$ is invariant if $\mu(T^{-1}_gE)=\mu(E)$ for all Borel $E \subset K^G$ and $g\in G$. Let $M(K^G)$ denote the space of all invariant Borel probability measures $\mu$ on $K^G$. The {\bf weak* topology} on $M(K^G)$ is defined as follows. We say that a sequence $\{\mu_n\}_{n=1}^\infty \subset M(K^G)$ converges to $\mu \in M(K^G)$ in the weak* topology if and only if for every continuous function $f:K^G \to \R$, $\lim_{n\to\infty} \int f ~d\mu_n =\int f ~d\mu$. Equivalently, $\lim_{n \to \infty} \mu_n = \mu$ (weak*) if and only if for every $m\ge 0$ and every $A \in \alpha^m$, $\lim_{n\to\infty} \mu_n(A) = \mu(A)$.

\subsection{Transition Systems}
\begin{defn}
Let $K$ be a finite or countably infinite set. A {\bf stochastic matrix} $P$ with {\bf state space $K$} is a $K \times K$ matrix $P=(P_{ij})$ such that
\begin{itemize}
\item $0\le P_{ij} \le 1$ for all $i,j$,
\item for each $i$, $\sum_{j\in K} P_{ij} = 1$.
\end{itemize}
A $1 \times K$ vector $\pi$ is a {\bf probability vector} if its entries are nonnegative and sum to one. If, in addition, $\pi P=\pi$ then $\pi$ is a  {\bf steady state vector} for $P$.
\end{defn}

\begin{defn}
A {\bf transition system} for $(G,S)$ is a collection of stochastic matrices $\{P^s\}_{s \in S}$ and a probability vector $\pi$. It is an {\bf invariant transition system} if the following hold.
\begin{itemize}
\item For all $s\in S$, $\pi$ is a steady state vector for $P^s$.
\item If $G$ is a group rather than a semigroup then for all $s \in S$, $i,j \in K$, $\pi_i P^{s^{-1}}_{ij} = \pi_j P^s_{ji}$. Just to be careful, note that $P^{s^{-1}}$ is not the inverse of $P^s$. It is equals $P^t$ where $t=s^{-1}$. 
\end{itemize}


Even if $G \cong \Z$, this definition differs from the classical case in a minor detail. Typically, only one transition matrix is given. But the above definition requires two: $P^s$ and $P^{s^{-1}}$. Of course, the second condition above implies that $P^{s^{-1}}$ is determined by $P^s$ so the two definitions are really equivalent. This redundancy will make forthcoming arguments a little simpler.
\end{defn}

\begin{defn}\label{defn:markovchain}
The {\bf Markov chain} over $G$ induced by the transition system $\P:=(\{P^s\}_{s\in S}, \pi)$ is the $G$-indexed set of random variables $(X_g)_{g \in G}$ satisfying the following conditions:
\begin{itemize}
\item The distribution of $X_e$ equals $\pi$. I.e., for any $k \in K$, the probability that $X_e=k$ equals $\pi_k$. Formally, $Pr(X_e=k)=\pi_k$.
\item Let $g \in G$ and $s \in S$ be such that $|sg|=|g|+1$ where $|\cdot|$ denotes word length. Let $f_1,\ldots,f_n \in \past(sg;g)-\{g\}$. Then for any $k, k_0,\ldots,k_n \in K$,
$$Pr(X_{sg}=k | X_{g}=k_0, X_{f_1}=k_1,\ldots,X_{f_n}=k_n) = Pr(X_{sg}=k | X_{g}=k_0)=P^s_{k_0,k}.$$
\end{itemize}
It is an {\bf invariant Markov chain} if $\P$ is invariant.
\end{defn}

\begin{defn}
For any measure $\mu$ on $X$ and any Borel sets $A, B \subset X$ with $\mu(B)>0$ define
$$\mu(A|B) = \frac{\mu(A \cap B)}{\mu(B)}.$$
\end{defn}

\begin{defn}
Let $(X_g)_{g\in G}$ be defined as above. Define the random function $x:G \to K$ by $x(g)=X_g$. Let $\mu$ be the probability measure on $K^G$ equal to the law of $x$ (i.e., for any Borel $E\subset K^G$, $\mu(E)$ is the probability that $x$ is contained in $E$). 

We say that $(T, K^G,\mu,\alpha)$ is the process induced by the transition system $\P:=(\{P^s\}_{s\in S}, \pi)$ (where $T$ is the canonical action of $G$ on $K^G$ and $\alpha$ is the canonical partition of $K^G$). Corollary \ref{cor:invariant} below shows that it is Markov.

The conditions on $(X_g)_{g\in G}$ stated above can be restated in terms of the measure $\mu$ as follows. For each $k\in K$, let $A_k=\{ y \in K^G~|~y(e)=k\}$. Then
\begin{itemize}
\item For all $k\in K$, $\mu(A_k)=\pi_k$,

\item Let $g \in G$ and $s \in S$ be such that $|sg|=|g|+1$. Let $f_1,\ldots,f_n \in \past(sg;g)-\{g\}$. Then for any $k, k_0,\ldots,k_n \in K$,
\begin{eqnarray*}
\mu\Big(T_{sg}^{-1}A_k \big| T^{-1}_gA_{k_0} \cap \bigcap_{i=1}^n T_{f_i}^{-1}A_{k_i}\Big) &=& \mu\big(T_{sg}^{-1}A_k \big| \cap T_g^{-1}A_{k_0}\big)= P^s_{k_0,k}.
\end{eqnarray*}
\end{itemize}
\end{defn}
In order to prove that $(T, K^G,\mu,\alpha)$ is a Markov process, we first need to prove that $\mu$ is $T$-invariant (when $\P$ is invariant). This is accomplished next. So fix an invariant transition system $\P:=(\{P^s\}_{s\in S}, \pi)$. For the next three lemmas, the identity element in $G$ is denoted by $id$.

\begin{defn}
Let $\Gamma_L$ be the left-Cayley graph of $(G,S)$ (definition \ref{defn:leftcayley}). If $e$ is an edge of $\Gamma_L$, let $e_-, e_+$ denote the endpoints of $e$ where $e_-$ is the vertex that is closest to the identity element in $\Gamma_L$. If $F \subset G$ is any set, let $E(F)$ denote the set of edges $e$ in $\Gamma_L$ that are directed from $e_- \in F$ to $e_+ \in F$.
\end{defn}

\begin{lem}\label{lem:mcformula}
Let $F \subset G$ be a finite left-connected set with $id \in F$. Let $z:F\to K$ be an arbitrary function and let
$$C=\{x \in K^G ~|~ x(g)=z(g) \forall g \in F\}$$
be the {\bf cylinder set induced by $z$ and $F$}. For each edge $e\in E(F)$, Let $p_z(e)=P^s_{ij}$ where $z(e_-)=i$ and $z(e_+)=j$ and $s \in S$ is such that $se_- = e_+$. Then
$$\mu(C) = \pi_{z(id)} \prod_{e \in E(F)} p_z(e).$$
\end{lem}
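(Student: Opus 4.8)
The plan is to induct on $|F|$, letting the tree structure of the left-Cayley graph carry the geometry.

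First I would dispose of the base case: if $|F|=1$ then $F=\{id\}$, $E(F)=\emptyset$, and $\mu(C)=\mu(A_{z(id)})=\pi_{z(id)}$ by the first defining property of the process induced by $\P$. For $|F|\ge 2$ I would record the key structural fact that, for a free group or free semigroup, $\Gamma_L$ is a tree; hence the left-subgraph induced by a finite left-connected set is a finite tree, with exactly $|F|-1$ edges, all belonging to $E(F)$.

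Next I would pick $g_0\in F$ with $|g_0|$ maximal (so $g_0\ne id$) and let $h$ be its unique $\Gamma_L$-neighbour with $|h|=|g_0|-1$. Left-connectedness forces the unique $\Gamma_L$-path from $g_0$ to $id$ to stay inside $F$, so $h\in F$; every other neighbour of $g_0$ has length $|g_0|+1$ and so, by maximality, is not in $F$. Thus $g_0$ is a leaf of the induced tree, its unique incident edge $e_0$ runs from $h=(e_0)_-$ to $g_0=(e_0)_+$, and $F':=F\setminus\{g_0\}$ is again left-connected and contains $id$, with $E(F)=E(F')\sqcup\{e_0\}$. Writing $g_0=sh$ with $s\in S$, $|sh|=|h|+1$, we have $p_z(e_0)=P^s_{z(h),z(g_0)}$. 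Let $C'$ be the cylinder set of $z|_{F'}$ and $F'$, so $C=C'\cap T_{g_0}^{-1}A_{z(g_0)}$. If $\mu(C')=0$ then $\mu(C)=0$ and, by induction, $\pi_{z(id)}\prod_{e\in E(F')}p_z(e)=0$, so the identity is trivial; otherwise, since $g_0$ is a leaf with sole neighbour $h$ in a tree, the unique path from any $f\in F'$ to $g_0$ traverses $e_0$ and so passes through $h$, giving $F'\subseteq\past(g_0;h)$ and hence $F'\setminus\{h\}\subseteq\past(sh;h)-\{h\}$. Feeding $g:=h$, this $s$, the finite list $F'\setminus\{h\}$, and the values dictated by $z$ into the Markov-chain property of Definition \ref{defn:markovchain} (in its $\mu$-formulation), and noting that the conditioning event $\bigcap_{g\in F'}T_g^{-1}A_{z(g)}$ is exactly $C'$, yields
$$\mu\big(T_{g_0}^{-1}A_{z(g_0)}\,\big|\,C'\big)=P^s_{z(h),z(g_0)}=p_z(e_0).$$
Then $\mu(C)=p_z(e_0)\,\mu(C')$ and the inductive hypothesis $\mu(C')=\pi_{z(id)}\prod_{e\in E(F')}p_z(e)$ closes the induction.

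I do not expect a computational obstacle anywhere; the two points that need care are the choice of $g_0$ (and the verification that $F'$ stays left-connected with $id\in F'$) and the inclusion $F'\subseteq\past(g_0;h)$, both of which reduce to the single external fact that $\Gamma_L$ is a tree.
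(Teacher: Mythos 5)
Your proof is correct; the paper itself offers no argument beyond ``This is immediate from the definition,'' and your leaf-removal induction (peel off a vertex of maximal word length, apply the Markov-chain conditional probability at the edge joining it to its unique shorter neighbour, and invoke the inductive hypothesis on the remaining left-connected set) is precisely the computation that remark elides. The two points you flag as needing care --- that $F'$ remains left-connected containing $id$, and that $F'\subseteq\past(g_0;h)$ --- are both handled correctly via the tree structure of $\Gamma_L$.
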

\begin{proof}
This is immediate from the definition.
\end{proof}

Our proof of invariance handles the group case separately from the semigroup case.
\begin{lem}\label{lem:invariance}
Suppose $G$ is a group. For all $g\in G$ and all Borel $E \subset K^G$, $\mu(E)=\mu(T_g^{-1}E)$. I.e., $\mu$ is $T_g$-invariant.
\end{lem}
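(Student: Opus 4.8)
The plan is to reduce the claim to a statement about cylinder sets, since these generate the Borel $\sigma$-algebra of $K^G$ and it suffices to check $\mu(E) = \mu(T_g^{-1}E)$ on a generating $\pi$-system closed under finite intersections. Moreover, since $G$ is generated by $S$ and each $g \in G$ is a product of elements of $S$, it is enough to prove $T_s$-invariance for a single $s \in S$. Finally, because $T_s^{-1}$ maps a cylinder supported on a finite set $F$ to a cylinder supported on $sF$, and because any finite subset of $G$ is contained in a finite left-connected set containing the identity (just add a path back to $id$), it suffices to verify the identity $\mu(C) = \mu(T_s^{-1}C)$ for cylinder sets $C$ of the form treated in Lemma \ref{lem:mcformula}, where both $F$ and $sF$ can be taken left-connected containing $id$ after enlarging $F$ appropriately.

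The heart of the argument is then a computation using the explicit product formula of Lemma \ref{lem:mcformula}. Fix $s \in S$ and a finite left-connected $F \ni id$; one wants to compare $\mu(C_z)$ for $C_z$ supported on $F$ with $\mu$ of its push-forward, which is a cylinder supported on $sF$. Here the crucial point is that the left-Cayley graph structure is shifted isometrically by right... rather, the relevant structure is that $g \mapsto sg$ is a graph automorphism of $\Gamma_L$ only up to the choice of basepoint — the edge set $E(F)$ and $E(sF)$ are in natural bijection, but the distinguished root moves from $id$ to $s$. So the formula gives a product $\pi_{z(id)} \prod_{e} p_z(e)$ on one side and $\pi_{w(id)} \prod_{e} p_w(e)$ on the other, where $w$ is the corresponding labelling on $sF$. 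The discrepancy between the two products is concentrated along a path in $\Gamma_L$ from $id$ to $s$ (or just the single edge from $id$ to $s$ if $s$ itself labels that edge), and reconciling it is exactly where the two invariance hypotheses on $\P$ enter: the steady-state equations $\pi P^s = \pi$ let one slide the root $\pi$-weight across one edge, and the reversibility-type relation $\pi_i P^{s^{-1}}_{ij} = \pi_j P^s_{ji}$ lets one reverse an edge's orientation when the shift reverses it. Summing over all extensions $z$ of a given labelling on the overlap $F \cap sF$ (equivalently, marginalizing the Markov chain over the vertices that appear on only one side) collapses the extra factors via $\sum_j P^s_{ij} = 1$, and what remains matches.

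Concretely, the steps in order: (1) reduce to $T_s$-invariance for $s \in S$; (2) reduce to cylinder sets, enlarging their support to be finite left-connected containing $id$; (3) for such a cylinder $C = C_z$ on $F$, identify $T_s^{-1}C$ as a cylinder on $sF$ and sum $\mu(T_s^{-1}C) = \sum \mu(C_w)$ over labellings $w$ of $sF$ restricting correctly, using Lemma \ref{lem:mcformula} for each; (4) do the telescoping/marginalization along the edge(s) joining $id$ to $s$, invoking $\pi P^s = \pi$ and the reversibility relation to re-root the product, and $\sum_j P^s_{ij}=1$ to eliminate pendant vertices; (5) conclude $\mu(T_s^{-1}C) = \mu(C)$.

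I expect the main obstacle to be the bookkeeping in step (4): carefully tracking how the edge set and its orientations transform under $g \mapsto sg$ relative to the moving root, and making sure that the case distinctions (whether the edge from $id$ to $s$ is oriented out of $id$, in which case one uses $\pi P^s = \pi$ directly, versus oriented into $id$, in which case one first applies reversibility to flip it) are handled uniformly. A clean way to organize this is induction on $|F|$, peeling off a leaf of a spanning tree of the left-subgraph of $F$ that is not on the chosen $id$-to-$s$ geodesic — essentially mirroring the inductive structure already used in Lemma \ref{lem:splittings} and in the proof of the preceding lemma — so that at each stage only a single edge's worth of $P^s$ or $P^{s^{-1}}$ needs to be manipulated. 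The group hypothesis is used precisely to guarantee $s^{-1} \in S$ so that the reversibility relation is available; the semigroup case is deferred, as the excerpt indicates, to a separate argument.
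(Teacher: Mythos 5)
Your overall skeleton --- reduce to a single generator, reduce to cylinder sets, apply the product formula of Lemma \ref{lem:mcformula}, and repair the root factor using the invariance hypotheses on $\P$ --- is the right one, and your marginalization idea is in fact how the paper handles the semigroup case. But the central computation is set up on the wrong side, and this is not cosmetic. With the canonical action $T_gx(f)=x(fg)$, the set $T_s^{-1}C$ for a cylinder $C$ supported on $F$ is the cylinder supported on the \emph{right} translate $Fs$, not on $sF$. Correspondingly, the map that respects the structure underlying Lemma \ref{lem:mcformula} is right translation $g\mapsto gs$, which is a genuine automorphism of the left-Cayley graph $\Gamma_L$: it sends the edge from $g$ to $ug$ to the edge from $gs$ to $ugs$, with the same label $u$. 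Left translation $g\mapsto sg$ sends that edge to the pair $(sg,sug)$, which is an edge of $\Gamma_L$ only when $sus^{-1}\in S$, i.e.\ only when $u\in\{s,s^{-1}\}$; so your claimed ``natural bijection'' between $E(F)$ and $E(sF)$ fails in general, and step (4) --- the part you yourself flag as the main obstacle --- cannot be carried out as described.

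Once the translate is corrected to $Ft$, the argument collapses to the paper's: assume without loss of generality that $S\subset F$ (every cylinder set is a disjoint union of such), so that $id\in Ft$ and $Ft$ is left-connected and Lemma \ref{lem:mcformula} applies directly to $T_t^{-1}C$. The bijection $E(F)\to E(Ft)$, $e\mapsto e\cdot t$, preserves every factor $p_z(e)$ except at the single edge $e_*$ joining $id$ to $t^{-1}$, whose orientation relative to the root reverses under translation; there is no path to telescope along and no marginalization to perform. The entire discrepancy between the two products is $\pi_{z(id)}P^{t^{-1}}_{z(id),z(t^{-1})}$ versus $\pi_{z(t^{-1})}P^{t}_{z(t^{-1}),z(id)}$, which is precisely the reversibility relation in the definition of an invariant transition system; in the group case the steady-state equation and the row sums $\sum_j P^s_{ij}=1$ are not needed at all. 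Those two ingredients are what the paper's separate semigroup argument (Lemma \ref{lem:semigroupinvariance}) uses, which is where your marginalization picture actually belongs.
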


\begin{proof}
Let $F, C, z$ be as in the previous lemma. Assume that $S \subset F$. Let $t \in S$. We will show that $\mu(C)=\mu(T_t^{-1}C)$. Let $t^{-1}z:Ft \to K$ be the function $(t^{-1}z)(ft)=z(f)$ for all $f \in F$. Then
\begin{eqnarray*}
T_t^{-1}C&=&\{x \in K^G ~|~ x(gt)=z(g) ~\forall g \in F\}\\
  &=&\{x \in K^G ~|~ x(g)=(t^{-1}z)(g) ~\forall g \in Ft\}.
\end{eqnarray*}
The previous lemma implies
\begin{eqnarray*}\label{lem:mc10}
\mu(T_t^{-1}C)=\pi_{z(t^{-1})} \prod_{e \in E(Ft)} p_{t^{-1}z}(e).
\end{eqnarray*}
If $e$ is an edge of $\Gamma_L$ then let $e\cdot t$ denote the edge with endpoints $e_-t$ and $e_+t$. 

{\bf Claim :} Either $(e_-,e_+)=(id,t^{-1})$ or $\big((e\cdot t)_-, (e\cdot t)_+\big) =(e_-t, e_+t)$.

To prove the claim, let $g\in G$, $s\in S$ be such that $(e_-, e_+)=(g,sg)$.  Let $j$ denote the path in $\Gamma_L$ from $id$ to $sg$. Then $j\cdot t$ is the path in $\Gamma_L$ from $t$ to $sgt$. If $|g| \ge 1$ then this path has length at least 2. This implies $|t| \le |gt| \le |sgt|$. I.e., $(e\cdot t)_-=e_-t$ and $(e \cdot t)_+ = e_+t$. The case $|g|=0$ (i.e., $g=id$) is obvious. This proves the claim.

The claim implies that if $e \in E(F)$ is such that $(e_-,e_+) \ne (id,t^{-1})$ then $p_{t^{-1}z}(e\cdot t) = p_z(e)$. So if we let $e_*$ be the edge from $id$ to $t^{-1}$ then
$$\mu(T_t^{-1}C)=\pi_{z(t^{-1})} p_{t^{-1}z}(e_*\cdot t) \prod_{e \in E(F)-\{e_*\}} p_{z}(e).$$

Let $i=z(id)$ and $j=z(t^{-1})$. By definition of $p$ and the definition of an invariant transition system,
\begin{eqnarray*}
\mu(T_t^{-1}C)&=& \pi_j P^t_{ji}  \prod_{e \in E(F)-\{e_*\}} p_{z}(e)= \pi_i P^{t^{-1}}_{ij}  \prod_{e \in E(F)-\{e_*\}} p_{z}(e) = \mu(C).
\end{eqnarray*}
Since this is true for all cylinder sets $C$ whose domain contains $S$, it is true for all cylinder sets (since any cylinder set is a disjoint union of such sets). Since the cylinder sets generate the Borel $\sigma$-algebra of $K^G$, it follows that $\mu(T_t^{-1}E)=\mu(E)$ for all Borel sets $E \subset K^G$. Since this is true for all $t\in S$ and $S$ generates $G$, it follows that $\mu$ is $T_g$-invariant for all $g\in G$.
\end{proof}

\begin{lem}\label{lem:semigroupinvariance}
Suppose $G$ is a semigroup. For all $g\in G$ and all Borel $E \subset K^G$, $\mu(E)=\mu(T_g^{-1}E)$. I.e., $\mu$ is $T_g$-invariant.
\end{lem}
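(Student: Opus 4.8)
The plan is to mirror the group case (Lemma \ref{lem:invariance}), again reducing to cylinder sets, but with the steady-state equation $\pi P^t = \pi$ playing the role that detailed balance played there (the latter no longer even makes sense, since there are no inverse generators). First I would record the reduction: it suffices to show $\mu(T_t^{-1}C) = \mu(C)$ for every $t \in S$ and every cylinder $C$ supported on a finite left-connected set $F$ with $id \in F$. Indeed, any cylinder set is a finite or countable disjoint union of such cylinders, the cylinders form a generating $\pi$-system for the Borel $\sigma$-algebra of $K^G$, so this identity forces $\mu$ to be $T_t^{-1}$-invariant for each $t\in S$; and since $S$ generates $G$ as a semigroup and $T_{g_1g_2} = T_{g_1}\circ T_{g_2}$, invariance under each $T_{s_i}$ propagates to $T_g$ for all $g\in G$ by induction on word length.

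For the cylinder identity, write $C = \{x : x(g)=z(g)\ \forall g\in F\}$, so that $T_t^{-1}C = \{x : x(g)=(t^{-1}z)(g)\ \forall g\in Ft\}$ where $(t^{-1}z)(ft)=z(f)$. The essential difference from the group case is that now $id\notin Ft$ (every element of $Ft$ is a word ending in the letter $t$), so Lemma \ref{lem:mcformula} does not apply to $Ft$ directly. But because $id\in F$ we have $t = id\cdot t\in Ft$, and $id$ is the parent of $t$ in $\Gamma_L$; moreover right-translation $g\mapsto gt$ is a label- and orientation-preserving isomorphism of the induced left-subgraph of $F$ onto that of $Ft$. (Here freeness is used crucially: since $|sg|=|g|+1$ always, every edge of $\Gamma_L$ points away from the root and there is no orientation ambiguity of the kind that produced the exceptional edge in Lemma \ref{lem:invariance}.) Consequently $F^+:=Ft\cup\{id\}$ is finite, left-connected, contains $id$, and $E(F^+) = \{e\cdot t : e\in E(F)\}\sqcup\{e_*\}$, where $e_*$ is the edge $id\to t$ labelled $t$.

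Then I would decompose $T_t^{-1}C = \bigsqcup_{k\in K} C_k^+$, where $C_k^+$ is the cylinder on $F^+$ agreeing with $t^{-1}z$ on $Ft$ and taking value $k$ at $id$. Applying Lemma \ref{lem:mcformula} to each $C_k^+$, using $p_{t^{-1}z}(e\cdot t)=p_z(e)$ on $E(Ft)$ and $p(e_*) = P^t_{k,z(id)}$ (the edge $e_*$ has $e_-=id$, $e_+=t$, and $(t^{-1}z)(t)=z(id)$), gives
\[
\mu(C_k^+) = \pi_k\, P^t_{k,z(id)}\prod_{e\in E(F)} p_z(e).
\]
Summing over $k$ and invoking the steady-state hypothesis $\pi P^t=\pi$ yields $\mu(T_t^{-1}C) = (\pi P^t)_{z(id)}\prod_{e\in E(F)}p_z(e) = \pi_{z(id)}\prod_{e\in E(F)}p_z(e) = \mu(C)$, completing the argument.

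I expect the only real obstacle to be the combinatorial bookkeeping in the second paragraph: one must verify carefully that $Ft$ is itself left-connected, that adjoining $id$ to $Ft$ introduces exactly one new edge (the edge $id\to t$), and that the translation $e\mapsto e\cdot t$ is a genuine bijection $E(F)\to E(Ft)$ missing no edge of $\Gamma_L$ with both endpoints in $Ft$. All of this is transparent from the tree structure of $\Gamma_L$ in the free case, but it is the step that needs genuine attention; the final algebra is immediate once $\pi P^t=\pi$ is substituted for detailed balance.
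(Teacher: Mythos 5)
Your proposal is correct and follows essentially the same route as the paper's own proof: decompose $T_t^{-1}C$ over the value at the identity, apply Lemma \ref{lem:mcformula} to the cylinders on $\{id\}\cup Ft$, and collapse the sum via $\pi P^t=\pi$. The combinatorial points you flag (left-connectedness of $Ft$, the single new edge $id\to t$, and the orientation-preserving bijection $e\mapsto e\cdot t$) are exactly the facts the paper invokes, so there is no gap.
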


\begin{proof}
Let $F, C, z$ be as in the lemma \ref{lem:mcformula}. Let $t \in S$. For each $k \in K$, let $z_k:\{id\} \cup Ft \to K$ be defined by $z_k(ft)=z(f)$ if $f \in F$ and $z_k(id)=k$. Let $C_k=\{x \in K^G ~|~ x(g)=z_k(g) \forall g \in \{id\}\cup F\}$. Since $T_t^{-1}C$ is the disjoint union of $C_k$ over $k\in K$, it follows from lemma \ref{lem:mcformula} that
$$\mu(T^{-1}_t C) = \sum_{k\in K} \mu(C_k) = \sum_{k\in K}\pi_k \prod_{e \in E(\{id\} \cup Ft)} p_{z_k}(e).$$
Let $e_*$ be the edge from $id$ to $t$. Then $p_{z_k}(e_*)=P^t_{kl}$ where $z(id)=l$. If $e \in F$ then $e\cdot t \in Ft$ and
$\big( (e\cdot t)_-, (e\cdot t)_+ \big) = \big(e_-t,e_+t\big)$. Hence $p_{z_k}(e \cdot t)=p_z(e)$. Also, $E(\{id\}\cup Ft) = E(F) \cup \{e_*\}$. Thus,
$$\mu(T^{-1}_t C) =  \sum_{k\in K} \pi_k P^t_{kl} \prod_{e \in E( F)} p_{z}(e) = \pi_l \prod_{e \in E( F)} p_{z}(e)= \mu(C).$$
The second equality follows from the assumption that $\pi$ is a steady state vector for $P^t$.

Since the cylinder sets generate the Borel $\sigma$-algebra of $K^G$, it follows that $\mu(T_t^{-1}E)=\mu(E)$ for all Borel sets $E \subset K^G$. Since this is true for all $t\in S$ and $S$ generates $G$, it follows that $\mu$ is $T_g$-invariant for all $g\in G$.
\end{proof}

\begin{cor}\label{cor:invariant}
Any process $(T,K^G,\mu,\alpha)$ induced by an invariant transition system $\P$ is Markov.
\end{cor}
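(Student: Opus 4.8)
The plan is to deduce the corollary from the explicit cylinder formula of Lemma~\ref{lem:mcformula} together with a martingale argument. Lemmas~\ref{lem:invariance} and \ref{lem:semigroupinvariance} already give that $\mu$ is $T$-invariant, so $(T,K^G,\mu,\alpha)$ is a genuine $G$-process and only the defining identity of a Markov process remains. Fix $s\in S$, $g\in G$ and $A_k\in\alpha$. Since $T_{sg}^{-1}A_k=T_g^{-1}T_s^{-1}A_k$ and $\mu$ is $T_g$-invariant, one checks directly that $\mu(T_{sg}^{-1}A_k\mid T_g^{-1}\alpha)$ equals the composition of $\mu(T_s^{-1}A_k\mid\alpha)$ with $T_g$; this is the ``automatic'' second equality in the definition of a Markov process, so the real content is the first equality
$$\mu\Big(T_{sg}^{-1}A_k \,\Big|\, \bigvee_{f\in\past(sg;g)}T_f^{-1}\alpha\Big) = \mu(T_{sg}^{-1}A_k\mid T_g^{-1}\alpha)\qquad\mu\text{-a.e.}$$

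In the group case I would first reduce to $g=id$. Right multiplication by $g$ is an automorphism of the left-Cayley graph $\Gamma_L$ that sends $id$ to $g$ and $s$ to $sg$, hence carries $\past(s;id)$ onto $\past(sg;g)$, so that $\bigvee_{f\in\past(sg;g)}T_f^{-1}\alpha=T_g^{-1}\big(\bigvee_{f'\in\past(s;id)}T_{f'}^{-1}\alpha\big)$; by $T_g$-invariance the identity at $(s,g)$ then follows from the identity at $(s,id)$. This reduction is convenient because at $id$ one always has $|s\cdot id|=|id|+1$, so there is no generator ``pointing towards the root'' to worry about. In the semigroup case right multiplication is only an embedding of $\Gamma_L$ and the reduction is unavailable, but there one can argue at a general $g$ directly, since $\past(sg;g)$ then contains $id$ and the geodesic from $id$ to $g$ --- which is exactly what the cylinder computation below requires.

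The core computation, common to both cases, is this: let $F\subseteq\past(sg;g)$ be a finite left-connected set containing $id$ and $g$, and let $C$ be the cylinder over $F$ with defining configuration $z$. Then $C\cap T_{sg}^{-1}A_k$ is the cylinder over $F\cup\{sg\}$ given by $z$ extended by $sg\mapsto k$; the set $F\cup\{sg\}$ is again finite, left-connected and contains $id$; and --- the key observation --- the only edge of $E(F\cup\{sg\})$ not already in $E(F)$ is the edge from $g$ to $sg$, because every neighbour of $sg$ in $\Gamma_L$ other than $g$ lies outside $\past(sg;g)$ and hence outside $F$, while $sg$ itself is not in $\past(sg;g)$. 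Applying Lemma~\ref{lem:mcformula} to $C$ and to $C\cap T_{sg}^{-1}A_k$ and dividing gives $\mu(T_{sg}^{-1}A_k\mid C)=P^s_{z(g),k}$, depending on $C$ only through $z(g)$. Exhausting $\past(sg;g)$ by an increasing sequence $F_1\subseteq F_2\subseteq\cdots$ of such sets and invoking the increasing martingale convergence theorem, one gets $\mu(T_{sg}^{-1}A_k\mid\bigvee_{f\in\past(sg;g)}T_f^{-1}\alpha)(x)=P^s_{x(g),k}$ for $\mu$-a.e.\ $x$; the same formula (now with $F$ a path from $id$ to $g$) yields $\mu(T_{sg}^{-1}A_k\mid T_g^{-1}\alpha)(x)=P^s_{x(g),k}$ as well, which is the desired equality. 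Together with the ``automatic'' second equality and the definition of a Markov process, this proves the corollary.

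The one place to be careful is the graph-theoretic bookkeeping: that right multiplication really is a graph automorphism in the group case but only an embedding in the semigroup case; that $id$ and the geodesic from $id$ to $g$ actually lie in $\past(sg;g)$ whenever they are needed; and, above all, that $sg$ is attached to any $F\subseteq\past(sg;g)$ through the single edge $(g,sg)$ --- this last fact is precisely what makes $P^s$, and nothing else, enter the transition. Everything else is a routine application of Lemma~\ref{lem:mcformula} and of martingale convergence.
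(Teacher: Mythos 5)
Your argument is correct, and it is worth noting that the paper's own ``proof'' of this corollary is a single sentence (``this follows immediately from the previous two lemmas''), so you are supplying genuine content that the paper leaves implicit. What you add, and add correctly, is the verification that the defining conditional probabilities of the induced chain --- which are only stipulated for finite conditioning sets and for the outward direction $|sg|=|g|+1$ --- really do yield the Markov property as defined, i.e.\ for the full $\sigma$-algebra $\bigvee_{f\in\past(sg;g)}T_f^{-1}\alpha$ and for \emph{every} $s\in S$ and $g\in G$, including the inward direction $|sg|=|g|-1$ in the group case. Your two key observations are exactly the right ones: the reduction to $g=id$ via the graph automorphism $h\mapsto hg$ of $\Gamma_L$ (legitimate once Lemma \ref{lem:invariance} gives $T_g$-invariance, and this is what disposes of the inward direction, where $id\notin\past(sg;g)$ and Lemma \ref{lem:mcformula} would not apply directly); and the fact that adjoining $sg$ to a finite left-connected $F\subseteq\past(sg;g)$ containing $id$ and $g$ contributes exactly one new edge $(g,sg)$ to $E(F\cup\{sg\})$, with $e_-=g$, so that the ratio of the two cylinder formulas is $P^s_{z(g),k}$ and nothing else. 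Martingale convergence then passes from finite $F$ to the full past, and the tower property handles the comparison with conditioning on $T_g^{-1}\alpha$ alone. The only cosmetic caveat is that in the semigroup case your phrase ``$F$ a path from $id$ to $g$'' for the second conditional should be read as summing the cylinder formula over configurations along the geodesic with $z(g)$ fixed (or, equivalently, conditioning on the finer geodesic $\sigma$-algebra and applying the tower property), since $\{x: x(g)=j\}$ by itself is not a cylinder over a set containing $id$; you clearly intend this, and it is routine.
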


\begin{proof}
This follows immediately from the previous two lemmas.
\end{proof}

\begin{cor}
If $(T,K^G,\mu,\alpha)$ is induced by an invariant transition system $\P=(\{P^s\}_{s\in S},\pi)$ then
$$f(T) = (2r-1)\sum_{i\in K} \pi_i\log(\pi_i)-\sum_{s\in S_+} \sum_{i,j \in K} \pi_iP^s_{ij}\log(\pi_i P^s_{ij}) .$$
Here $S_+=\{s_1,\ldots,s_r\}$.
\end{cor}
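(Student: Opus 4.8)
The plan is to reduce the computation of $f(T)$ to the single number $F(T,\alpha)$ and then evaluate the entropies appearing there directly from the transition data. By corollary \ref{cor:invariant}, the process $(T,K^G,\mu,\alpha)$ is Markov. The canonical partition $\alpha$ generates the Borel $\sigma$-algebra of $K^G$ (the cylinder sets generate it), and, assuming $H(\alpha)=-\sum_{i\in K}\pi_i\log(\pi_i)<\infty$ — which is automatic when $K$ is finite and is needed for $f(T)$ to be well-defined — $\alpha$ is a finite-entropy generating partition, so $f(T)=f(T,\alpha)$. Applying theorem \ref{thm:f=F} with $\beta$ the trivial partition $\{K^G\}$ (a coarsening of $\alpha$, with $\beta^G$ the trivial $\sigma$-algebra) gives
$$f(T)=f(T,\alpha)=F(T,\alpha)=(1-2r)H(\alpha)+\sum_{i=1}^r H(\alpha\vee T_{s_i}^{-1}\alpha).$$

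Next I would evaluate the two kinds of entropy. Since $\mu(A_k)=\pi_k$ by construction of $\mu$, we have $H(\alpha)=-\sum_{k\in K}\pi_k\log(\pi_k)$. For the joins, note that the atoms of $\alpha\vee T_{s_i}^{-1}\alpha$ are the sets $A_k\cap T_{s_i}^{-1}A_l=\{x\in K^G~|~x(e)=k,\ x(s_i)=l\}$, which is exactly the cylinder set induced by the function $z$ with $z(e)=k$, $z(s_i)=l$ on the two-element left-connected set $F=\{e,s_i\}$ containing the identity. Lemma \ref{lem:mcformula} then gives $\mu(A_k\cap T_{s_i}^{-1}A_l)=\pi_k P^{s_i}_{kl}$, since the unique edge of $E(F)$ runs from $e$ to $s_i$ and carries the label $s_i$. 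Hence $H(\alpha\vee T_{s_i}^{-1}\alpha)=-\sum_{k,l\in K}\pi_k P^{s_i}_{kl}\log(\pi_k P^{s_i}_{kl})$.

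Substituting these expressions into the formula for $F(T,\alpha)$ and recalling that $S_+=\{s_1,\dots,s_r\}$ yields
$$f(T)=(1-2r)\Big(-\sum_{i\in K}\pi_i\log(\pi_i)\Big)+\sum_{s\in S_+}\Big(-\sum_{i,j\in K}\pi_iP^s_{ij}\log(\pi_iP^s_{ij})\Big)=(2r-1)\sum_{i\in K}\pi_i\log(\pi_i)-\sum_{s\in S_+}\sum_{i,j\in K}\pi_iP^s_{ij}\log(\pi_iP^s_{ij}),$$
which is the asserted identity. There is essentially no obstacle here: the real content is theorem \ref{thm:f=F} (Markov processes satisfy $f=F$) and lemma \ref{lem:mcformula} (the product formula for cylinder measures), and the only point warranting a word of care is the well-definedness of $f(T)$ when $K$ is infinite, which we dispatch by assuming $H(\alpha)<\infty$.
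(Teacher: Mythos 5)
Your proof is correct and follows the same route as the paper, which simply cites Corollary \ref{cor:invariant} (the induced process is Markov) together with Theorem \ref{thm:f=F} ($f=F$ for Markov processes); you have merely filled in the routine evaluation of $H(\alpha)$ and $H(\alpha\vee T_{s_i}^{-1}\alpha)$ via Lemma \ref{lem:mcformula}. The remark about assuming $H(\alpha)<\infty$ for well-definedness when $K$ is infinite is a reasonable point of care that the paper leaves implicit.
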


\begin{proof}
This follows from the previous corollary and theorem \ref{thm:f=F}.
\end{proof}

\begin{proof}[Proof of theorem \ref{thm:existence}]
Let $\beta=(B_1, B_2,\ldots)$ and $K=\N$. Let $\pi$ be the $1 \times K$-vector defined by $\pi_k=\nu(B_k)$. Let $P^s_{ij}=\nu(U_s^{-1}B_j~|~B_i)$. It is a simple exercise (using the $U$-invariance of $\nu$) to check that $\P=(\{P^s\}_{s\in S}, \pi)$ is an invariant transition system. Let $\bX=(T,K^G,\mu,\alpha)$ be the Markov process induced by $\P$. It is automatic that $d_1(\bX,\bY)=0$. This proves existence. Uniqueness is trivial.
\end{proof}

\section{Examples}\label{sec:examples}

In this section, we give three examples of Markov chains over free groups; one related to the Wired Spanning Forest, to perfect matchings, and a third one with negative $f$-invariant. These are not used in the rest of the paper.

\subsection{The Wired Spanning Forest}\label{sec:example1}

The uniform spanning tree (UST) on a finite graph is a subgraph chosen uniformly at random among all spanning trees. In [Pe91], R. Pemantle answered a question of R. Lyons by showing that if $\sG$ is an infinite graph and if $\sG_1 \subset \sG_2 \subset \ldots$ is an exhaustion of $\sG$ by finite connected subgraphs, then the weak limit of the UST on $\sG_n$ exists. The limit is called the free spanning forest (FSF). In his proof, R. Pemantle introduced another model that is now called the wired spanning forest (WSF). It is defined as follows. As above, let $\sG_1 \subset \sG_2 \subset \ldots$ be an exhaustion of $\sG$ by finite connected subgraphs. Let $\sG_i^w$ be the graph $\sG_i$ with all of its boundary vertices identified (i.e., wired) to a single vertex. Then the WSF on $\sG$ is the weak limit of the UST on $\sG^w_i$ as $i \to \infty$. See [BLPS01] for a thorough study of the construction and properties of the FSF and WSF as well as references to other works on the subject.

Here we are interested in the WSF on the left-Cayley graph $\Gamma=\Gamma_L$ of the group $G=\langle s_1,\ldots,s_r\rangle$. We will describe it as a Markov chain over $G$ with state space $S=\{s_1^{\pm 1},\ldots, s_r^{\pm 1}\}$. But before this, we give a little intuition as to what we are doing.

Let $x:G \to S$ be a function. Let $F_x$ be the subgraph of $\Gamma$ defined as follows. An edge from $g$ to $sg$ is in $F_x$ if and only if either $x(g)=s$ or $x(sg)=s^{-1}$. It is automatic that $F_x$ is a spanning forest because the Cayley graph $\Gamma_L$ is a tree. Now, suppose $x$ satisfies the following condition: if $x(g) = s \in S$ then $x(sg)\ne s^{-1}$. In this case, $F_x$ has no finite components. The Markov measure $\mu$ on $S^G$ that we will define is maximally symmetric and has the property that if $x:G \to S$ is a random element drawn according to $\mu$ then $x$ satisfies the above condition so that $F_x$ has no finite components.

The transition system of the Markov chain is denoted here by $\P=(\{P^s\}_{s\in S}, \pi)$ as usual. In agreement with the above discussion, $P^s_{ss^{-1}}=0$ for all $s\in S$. The symmetry considerations lead to the following values for every $s\in S$.
$$\pi_s=\frac{1}{|S|},$$
$$P^s_{st} = \frac{1}{|S|-1} \textrm{ for all } t \ne s^{-1},$$
$$P^s_{ts^{-1}} = \frac{1}{|S|-1} \textrm{ for all } t \ne s,$$
$$P^s_{uv} = \frac{|S|-2}{(|S|-1)^2} \textrm{ for all } u,v \in S \textrm{ with } u\ne s, v\ne s^{-1}.$$
So, the $f$-value of this system is:
\begin{eqnarray*}
&&\frac{|S|}{2}\Big( \frac{2}{|S|}\log(|S|(|S|-1)) + \frac{|S|-2}{|S|}\log(\frac{(|S|-1)^2|S|}{|S|-2}\Big) - (|S|-1)\log(|S|)\\
&=& ( 1+ \frac{|S|-2}{2}-|S|+1)\log(|S|) +(|S|-1) \log(|S|-1) - \frac{|S|-2}{2}\log(|S|-2)\\
&=& (1-r)\log(2r) + (2r-1) \log(2r-1) + (1-r)\log(2r-2).
\end{eqnarray*}
Using Wilson's algorithm [Wi96], it can be proven that the random graph $F_x$ (where $x$ has law given by the above Markov measure) is the WSF. For a comparison, let $\sG_n$ be a connected graph on $n$ vertices such the random weak limit  of the sequence $\{\sG_n\}$ is a $2r$-regular tree (see [Ly05] for definitions). Improving on an earlier result of [Mc83], in [Ly05] it is proven that the exponential growth rate of the number of spanning trees in $\sG_n$ is exactly $(1-r)\log(2r) + (2r-1) \log(2r-1) + (1-r)\log(2r-2)$.

\subsection{Perfect Matchings}\label{sec:example2}

There is a natural random perfect matching on the left-Cayley graph $\Gamma=\Gamma_L$ of the free groupÊ $G=\langle s_1,\ldots ,s_r\rangle$. We will describe it as a Markov chain over $G$ with state space $S=\{s_1^{\pm 1},\ldots,s_r^{\pm 1}\}$. But before this, we give a little intuition as to what we are doing.

As in the previous example, let $x:G \to S$ be a function. Let $F_x$ be the subgraph of $\Gamma_L$ defined as follows. An edge from $g$ to $sg$ is in $F_x$ if and only if either $x(g)=s$ or $x(sg)=s^{-1}$. It is automatic that $F_x$ is a spanning forest because $\Gamma_L$ is a tree. Now, suppose $x$ satisfies the following condition: if $x(g) = s \in S$ then $x(sg)= s^{-1}$. In this case, every component of $F_x$ consists of a single edge. So $F_x$ is a perfect matching. The Markov measure $\mu$ on $S^G$ that we will define is maximally symmetric and has the property that if $x:G \to S$ is a random element drawn according to $\mu$ then $x$ satisfies the above condition so that $F_x$ is a perfect matching.

The transition system of the Markov chain is denoted here by $\P=(\{P^s\}_{s\in S}, \pi)$ as usual. In agreement with the above discussion, $P^s_{ss^{-1}}=1$ for all $s\in S$. Thus, $P^s_{st}=0$ for all $t\ne s^{-1}$. Imposition of maximal symmetry conditions leads to the following values for every $s\in S$.
$$\pi_s=\frac{1}{|S|},$$
$$P^s_{ts^{-1}} = 0 \textrm{ for all } t \ne s,$$
$$P^s_{uv} = \frac{1}{|S|-1} \textrm{ for all } u,v \in S \textrm{ with } u\ne s, v\ne s^{-1}.$$
So, the $f$-value of this system is:
\begin{eqnarray*}
&& -(1/2)\big(\sum_{s\in S} \sum_{i,j \in K} \pi_iP^s_{ij}\log(\pi_i P^s_{ij})\big) + (2r-1)\sum_{i\in K} \pi_i\log(\pi_i)\\
&&=(1/2)\log (|S|) + \Big(\frac{|S|-1}{2}\Big)\log(|S|(|S|-1)) - (2r-1)\log(|S|)\\
&&=-\Big(\frac{2r-2}{2}\Big) \log(2r) + \Big(\frac{2r-1}{2}\Big)\log(2r-1).
\end{eqnarray*}
For a comparison, let $\sG_{n,2r}$ be a graph chosen uniformly at random among all $2r$-regular graphs on $n$ vertices. In [BM86], it is proven that $\bE[M_n]$, the expected number of perfect matchings on $\sG_{n,2r}$ is asymptotic (as $n\to \infty$) to
$$\sqrt{2}e^{1/4}\exp\Big( -\Big(\frac{2r-2}{2}\Big) \log(2r)n + \Big(\frac{2r-1}{2}\Big)\log(2r-1)n\Big).$$

\subsection{A mixing Markov chain with negative $f$-invariant}\label{sec:example3}

\begin{prop}
If $G$ is a nonabelian free group then there exists a Markov process $(T,K^G,\mu,\alpha)$ such that $-\infty < f(T)<0$.
\end{prop}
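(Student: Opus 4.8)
The plan is to write down an explicit invariant transition system and read off the value of $f$ from the formula proved at the end of Section~\ref{sec:markovchains}. Since $G$ is a nonabelian free group, $r \ge 2$. Take $K = \{0,1\}$, $\pi = (1/2,1/2)$, fix a small $\epsilon \in (0,1/2)$, and for every $s \in S$ set
$$P^s = \begin{pmatrix} 1-\epsilon & \epsilon \\ \epsilon & 1-\epsilon \end{pmatrix}.$$
Each $P^s$ is symmetric and doubly stochastic, so $\pi$ is a steady-state vector for it, and since $\pi$ is uniform and $P^s=P^{s^{-1}}$ is symmetric we get $\pi_i P^{s^{-1}}_{ij} = \tfrac12 P^s_{ij} = \tfrac12 P^s_{ji} = \pi_j P^s_{ji}$; hence $\P := (\{P^s\}_{s\in S},\pi)$ is an invariant transition system. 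Let $(T,K^G,\mu,\alpha)$ be the Markov process it induces (Corollary~\ref{cor:invariant}). Because $K$ is finite, the canonical partition $\alpha$ generates and $H(\alpha)=\log 2 < \infty$, so $f(T)$ is well-defined; in particular $f(T)>-\infty$.

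It remains to check $f(T)<0$. By the corollary at the end of Section~\ref{sec:markovchains},
$$f(T) = (2r-1)\sum_{i\in K}\pi_i\log\pi_i \;-\; \sum_{s\in S_+}\sum_{i,j\in K}\pi_i P^s_{ij}\log(\pi_i P^s_{ij}),\qquad S_+=\{s_1,\dots,s_r\}.$$
Here $\sum_i\pi_i\log\pi_i = -\log 2$, and a short computation gives $-\sum_{i,j}\pi_i P^s_{ij}\log(\pi_i P^s_{ij}) = \log 2 + \eta(\epsilon)$ for every $s$, where $\eta(\epsilon):=-\epsilon\log\epsilon-(1-\epsilon)\log(1-\epsilon)$. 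Thus
$$f(T) = -(2r-1)\log 2 + r\big(\log 2 + \eta(\epsilon)\big) = (1-r)\log 2 + r\,\eta(\epsilon).$$
Since $\eta(\epsilon)\to 0$ as $\epsilon\to 0$ and $r\ge 2$, any $\epsilon$ with $\eta(\epsilon)<\tfrac{r-1}{r}\log 2$ works; e.g. $\epsilon = 1/16$, for which $\eta(1/16)<\tfrac12\log 2 \le \tfrac{r-1}{r}\log 2$, gives $f(T)<0$. This proves the proposition.

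For the proposition exactly as stated there is no real obstacle: it is a substitution into the already-established formula, the only point requiring attention being to keep the per-generator transition entropy $\eta(\epsilon)$ below $\tfrac{r-1}{r}$ times the state entropy $\log 2$, which small $\epsilon>0$ guarantees. The step carrying genuine content --- needed for the claim in the introduction that this chain is \emph{not} isomorphic to a Bernoulli shift --- is checking that the process above is mixing. The idea: for cylinder sets $A,B$ supported on finite sets $F_1,F_2\subset G$, the event $T_g^{-1}A$ is supported on $F_1g$, and when $|g|$ is large the geodesic in the left-Cayley tree joining $F_1g$ to $F_2$ is long, so conditioning along it contracts at a geometric rate because each $P^s$ is primitive (all entries positive); hence $\mu(T_g^{-1}A\cap B)\to\mu(A)\mu(B)$ and mixing on all Borel sets follows by approximation. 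Combined with $f(T)<0$ and the fact that a Bernoulli shift over $G$ has nonnegative $f$-invariant, this shows the chain is mixing but not Bernoulli, in contrast with the Friedman--Ornstein theorem.
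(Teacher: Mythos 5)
Your proof is correct and is essentially the paper's own argument: the same two-state transition system with uniform $\pi$ and symmetric matrices $P^s$ (up to swapping $\epsilon\leftrightarrow 1-\epsilon$), with negativity of $f$ read off from the corollary at the end of \S\ref{sec:markovchains}. The only difference is that you evaluate $f(T)=(1-r)\log 2+r\,\eta(\epsilon)$ exactly for all $\epsilon$, whereas the paper computes the value at $\epsilon=0$ and invokes continuity in $\epsilon$; your explicit formula is, if anything, cleaner (and the mixing discussion, while a reasonable sketch, is not needed for the proposition as stated).
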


\begin{proof}
Let $0\le \epsilon \le 1$ be given. Let $K$ be a two-element set. Let $\pi=[\frac{1}{2} \frac{1}{2}]$. For each $s\in S$, let
\begin{displaymath}
P^s=\left[ \begin{array}{cc}
\epsilon & 1-\epsilon \\
1-\epsilon & \epsilon
\end{array} \right].
\end{displaymath}
It is easy to check that $\P=(\{P^s\}_{s\in S}, \pi)$ is an invariant transition system for all $\epsilon \in [0,1]$. Let $(T,K^G,\mu_\epsilon,\alpha)$ be the induced Markov process. Its $f$-value, denoted $f(T,\mu_\epsilon)$, varies continuously with $\epsilon$. Since $f(T,\mu_0)= -(2r-1)\log(2) <0,$ $(T,K^G,\mu_\epsilon,\alpha)$ is a Markov process with negative $f$-invariant for all $\epsilon\ge 0$ sufficiently small. 
\end{proof}

 In [Bo09] it is shown that no Bernoulli shift factors onto a shift with negative $f$-invariant. Hence each system constructed above is not even weakly isomorphic to a Bernoulli shift. It is interesting to compare this with the well-known result [FO70] that every mixing Markov chain over the integers is isomorphic to a Bernoulli shift. By comparison, it can be proven that for $\epsilon \in (0,1)$, the systems constructed above are uniformly mixing. This leads to an interesting open problem: classify mixing Markov systems over a free group up to measure-conjugacy.

\section{Markov approximations and the proof that $f=f_*$}\label{sec:f=f_*}

The purpose of this section is to prove:
\begin{thm}\label{thm:f_*=f}
Let $(T,X,\mu,\alpha)$ be a $G$-process with $H(\alpha)<\infty$. Let $\beta$ be a partition of $X$ with $H(\beta)<\infty$ and $\beta^G \subset \alpha^G$. Then  $f_*(\alpha|\beta^G)=f(\alpha|\beta^G)$.
\end{thm}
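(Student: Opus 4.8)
The plan is to prove the two inequalities separately. The inequality $f_*(\alpha|\beta^G)\le f(\alpha|\beta^G)$ is elementary: for every partition $\gamma$ with $H(\gamma)<\infty$ and every $T(G)$-invariant $\sigma$-algebra $\cF$ one has $F_*(\gamma|\cF)\le F(\gamma|\cF)$, because $h(T_s,\gamma|\cF)\le H(\gamma\vee T_s^{-1}\gamma|\cF)-H(\gamma|\cF)$ (using $T_s^{-1}\cF\subseteq\cF$) for every $s$; applying this to $\gamma=\alpha^n$ and taking the infimum over $n$ gives the claim. Before attacking the reverse inequality I would reduce to the case $\beta\le\alpha$: on the factor $(X,\alpha^G,\mu)$ both $\alpha$ and $\alpha\vee\beta$ are finite-entropy generating partitions and $\beta^G\subseteq\alpha^G$ is $G$-invariant, so the invariance theorem of \S\ref{sec:alternative} gives $f(\alpha|\beta^G)=f(\alpha\vee\beta|\beta^G)$, and similarly for $f_*$; thus we may and do assume $\beta$ is a coarsening of $\alpha$.

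Now I would run a Markov-approximation argument. For each $n$ let $\bX_n=(T_n,K_n^G,\mu_n,\alpha_n)$ be the Markov process produced by Theorem~\ref{thm:existence} with $d_1\bigl(\bX_n,(T,X,\mu,\alpha^n)\bigr)=0$, so $K_n$ indexes the atoms of $\alpha^n$ and $\mu_n$ agrees with $\mu$ on all one-step $\alpha^n$-data; let $\bar\alpha_n\le\alpha_n$, $\bar\beta_n\le\alpha_n$ be the coarsenings corresponding to $\alpha$ and $\beta$. The key structural fact is that $\bar\alpha_n$ \emph{generates} $\bX_n$: because $G$ is free, its left-Cayley graph is a tree, so the agreement of adjacent $\alpha_n$-coordinates forced by $d_1(\bX_n,\cdot)=0$ propagates along geodesics to a global consistency condition, whence the $\bar\alpha_n$-name of a point determines its $\alpha_n$-name $\mu_n$-a.e. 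Consequently, by the invariance theorem of \S\ref{sec:alternative} ($\alpha_n$ and $\bar\alpha_n$ both generate) together with Theorem~\ref{thm:f=F} ($\bX_n$ is Markov and $\bar\beta_n$ a coarsening of $\alpha_n$),
$$f(T_n,\bar\alpha_n|\bar\beta_n^G)=f(T_n,\alpha_n|\bar\beta_n^G)=F(T_n,\alpha_n|\bar\beta_n^G)=F_*(T_n,\alpha_n|\bar\beta_n^G)=f_*(T_n,\bar\alpha_n|\bar\beta_n^G).$$

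To conclude, code $\bX_n$ through $\bar\alpha_n$ and $\bar\beta_n$ into a symbolic system with measure $\nu_n$ on $(\text{atoms of }\alpha)^G$. The tree structure again makes the $\bar\alpha_n$-name on the ball $B(e,n)$ a deterministic function of the central $\alpha_n$-coordinate, whose $\mu_n$-law equals the $\mu$-law of the $\alpha$-name on $B(e,n)$; hence $\nu_n$ agrees with the original symbolic system $\nu$ on every cylinder supported in $B(e,n)$, so $\nu_n\to\nu$ in the weak$^*$ topology. Now $f(T_n,\bar\alpha_n|\bar\beta_n^G)=F(T_n,\alpha_n|\bar\beta_n^G)$, which — being determined by the matched one-step data — equals $F(T,\alpha^n|\beta^G)$ (this last equality is immediate when $\beta$ is trivial, and in general needs the relative bookkeeping flagged below), and therefore decreases to $f(\alpha|\beta^G)$; while $f_*(T_n,\bar\alpha_n|\bar\beta_n^G)=f_*(\nu_n,\alpha|\beta^G)$, and $f_*(\cdot|\beta^G)$ is upper semicontinuous in the measure variable (an infimum of the functions $\mu\mapsto F_*(\mu,\alpha^k|\beta^G)$, each upper semicontinuous because its conditional entropy-rate terms are), so $\limsup_n f_*(T_n,\bar\alpha_n|\bar\beta_n^G)\le f_*(\alpha|\beta^G)$. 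Combining these with the displayed chain of equalities gives $f(\alpha|\beta^G)\le f_*(\alpha|\beta^G)$, completing the proof.

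I expect the real content to sit in the two steps that are not purely formal: (i) showing that $\bar\alpha_n$ genuinely generates $\bX_n$ — the one spot where the tree structure of $G$ is indispensable, and what makes it legitimate to pass between the canonical partition $\alpha_n$ (to which Theorem~\ref{thm:f=F} applies) and the partition $\bar\alpha_n$ that records the original process; and (ii) the conditioning on the infinite $\sigma$-algebra $\beta^G$: since Theorem~\ref{thm:existence} is unconditional, one must either build a $\beta^G$-relative Markov approximation or verify directly that $F(T_n,\alpha_n|\bar\beta_n^G)\to F(T,\alpha^n|\beta^G)$ and that the needed upper semicontinuity of $f_*$ persists after conditioning on $\beta^G$. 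When $\beta$ is trivial both difficulties disappear and the argument above is complete as written.
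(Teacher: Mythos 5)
Your strategy is the paper's strategy: reduce to the case $\beta\le\alpha$ with $\alpha$ a generating partition of a symbolic system; prove the elementary termwise inequality $F_*(\alpha^n|\beta^G)\le F(\alpha^n|\beta^G)$ to get $f_*\le f$; and for the reverse, take the Markov approximation $\mu_n$ to $\mu$ induced by $\alpha^n$, apply theorem \ref{thm:f=F} to get $F(\mu_n,\alpha^n|\beta^G)=F_*(\mu_n,\alpha^n|\beta^G)=f_*(\mu_n,\alpha|\beta^G)$, and pass to the limit using weak* convergence $\mu_n\to\mu$ together with upper semicontinuity of $f_*$ in the measure variable. Your worry (i) --- that the coarsening $\bar\alpha_n$ of the abstract Markov process must generate it --- is real but is resolved exactly as you guess, and the paper packages it as lemma \ref{lem:nstep}: since adjacent $\alpha^n$-patterns are forced to be consistent and the Cayley graph is a tree, the Markov measure on the symbol space of $\alpha^n$-atoms is supported on the image of the canonical embedding of $K^G$, and therefore pulls back to a measure $\mu_n$ on the \emph{original} space $K^G$. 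Working on the fixed space $K^G$ from the outset removes your final recoding step and lets lemmas \ref{lem:approx} and \ref{lem:mucontinuous} be applied verbatim.

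The one place your writeup falls short of the stated theorem is the one you flag yourself: the relative case. As written, your argument is complete only for trivial $\beta$, whereas the theorem (and its use in proving theorem \ref{thm:abramov}) requires general $\beta$ with $\beta^G\subset\alpha^G$. The paper handles this by carrying the conditioning on the single fixed $\sigma$-algebra $\beta^G$ through every step --- theorem \ref{thm:f=F} and lemma \ref{lem:mucontinuous} are already stated relatively --- and the only step it does not spell out is precisely the identity you isolate, namely $F(\mu,\alpha^n|\beta^G)=F(\mu_n,\alpha^n|\beta^G)$, which it attributes to $d_1\big((T,K^G,\mu_n,\alpha^n),(T,K^G,\mu,\alpha^n)\big)=0$. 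This is not automatic: $H(\alpha^n|\beta^G)$ depends on the joint law of $\alpha^n$ with arbitrarily distant translates of $\beta$, not only on the one-step data matched by the condition $d_1=0$, so some additional argument (for instance a relative Markov property of $\mu_n$ in the spirit of the unlabeled lemma following lemma \ref{lem:splitting1} in \S\ref{sec:markovprocesses}) is needed here. You have correctly located the only genuinely delicate point of the relative case, but you must supply this step before the proof covers the theorem as stated.
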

I do not know if the result holds if $H(\beta)=+\infty$. The proof is an application of theorem \ref{thm:f=F}. We will approximate the given process by a sequence of Markov processes. The first step is to embed the given process into a symbolic process as defined next.

\begin{defn}
A process of the form $(T,K^G,\mu,\alpha)$ where $T$ is the canonical action on $K^G$, $K$ is finite or countably infinite and $\alpha$ is the canonical partition is a {\bf symbolic process}.
\end{defn}

\begin{lem}\label{lem:embedding}
Let $(S,Y,\nu,\beta)$ be a $G$-process. If $\beta$ is generating then there is a canonical process isomorphism $\phi: (S,Y,\nu,\beta) \to (T,\beta^G,\mu,\alpha)$ where $ (T,\beta^G,\mu,\alpha)$ is symbolic.
\end{lem}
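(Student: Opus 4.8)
The statement is essentially a "symbolic coding" lemma: a generating partition lets us record the $G$-orbit of a point as an element of $K^G$, and this coding is the desired isomorphism. First I would set $K$ to be the index set of the partition $\beta = (B_k)_{k\in K}$ (since $\beta \in \cP$, $K$ is finite or countably infinite, and if $\beta$ is infinite we take $K = \N$). Then I would define the coding map $\phi_0: Y \to K^G$ by
$$\phi_0(y)(g) = k \quad\text{iff}\quad S_g y \in B_k,$$
or equivalently $\phi_0(y)(g)$ is the label of the atom of $\beta$ containing $S_g y$. This is well-defined off a null set (the one where some $S_g y$ lies on the boundary between atoms, which is null since each $S_g$ is measure-preserving and $\beta$ is a partition up to measure zero). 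I would then let $\mu := (\phi_0)_*\nu$ and check the three things that make $\phi_0$ a process isomorphism onto a symbolic process.

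\textbf{Key steps.} (1) \emph{Equivariance.} For $h \in G$, compute $\phi_0(S_h y)(g) = $ the label of the atom containing $S_g S_h y = S_{gh} y$, which is $\phi_0(y)(gh) = (T_h \phi_0(y))(g)$ by definition of the canonical action $T_h x(g) = x(gh)$. So $\phi_0 \circ S_h = T_h \circ \phi_0$ for every $h$. (2) \emph{Partition matching.} The canonical partition $\alpha = \{A_k : k \in K\}$ with $A_k = \{x : x(e) = k\}$ pulls back under $\phi_0$ to $\phi_0^{-1}(A_k) = \{y : \phi_0(y)(e) = k\} = \{y : S_e y = y \in B_k\} = B_k$, so $\phi_0^{-1}(\alpha) = \beta$, hence $(\phi_0)_*\beta = \alpha$. (3) \emph{Injectivity on a conull set / identification of the image $\sigma$-algebra.} This is where the generating hypothesis enters: the $\sigma$-algebra $\phi_0^{-1}(\mathcal{B}_{K^G})$ is generated by the sets $\phi_0^{-1}(T_g^{-1} A_k) = S_g^{-1} B_k$, i.e. it equals $\beta^G$, and since $\beta$ generates, $\beta^G = \cC$ modulo null sets. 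A standard measure-theoretic fact (e.g.\ via the Rokhlin correspondence between complete sub-$\sigma$-algebras of a standard space and its measurable factors) then gives that $\phi_0$, as a map onto its essential image $\beta^G \subset K^G$, is a measure isomorphism with measurable inverse. Writing $(T,\beta^G,\mu,\alpha)$ for this image — where $\beta^G$ abusively denotes the support of $\mu$ inside $K^G$, consistent with the paper's habit of identifying a join with the $\sigma$-algebra it generates — we get the claimed isomorphism $\phi$. The word "canonical" in the statement is justified because $\phi$ depends only on the labelling of $\beta$, nothing else.

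\textbf{Main obstacle.} The only genuinely substantive point is step (3): turning "the pullback $\sigma$-algebra is all of $\cC$" into "$\phi_0$ has a measurable, measure-preserving inverse on conull sets." This is the classical statement that a generating partition yields a measurable isomorphism between $(S,Y,\nu)$ and the shift system on the orbit-coding space, and it relies on $(Y,\cC,\nu)$ being standard (stated as a blanket assumption in \S\ref{sec:notation}) together with the fact that countably generated sub-$\sigma$-algebras of standard spaces behave well (Rokhlin's theory of Lebesgue spaces and their factors). Everything else — equivariance, the partition identification, invariance of $\mu$ under $T$ (which follows from invariance of $\nu$ together with equivariance) — is a direct unwinding of definitions. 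I would cite the standard reference already used elsewhere in the paper (e.g.\ [Gl03]) for the Rokhlin correspondence rather than reproving it.
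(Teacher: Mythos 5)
Your proposal is correct and follows essentially the same route as the paper: define the coding map $y\mapsto\big(g\mapsto\text{atom of }\beta\text{ containing }S_gy\big)$, verify equivariance against the canonical action $T_hx(g)=x(gh)$, note that $\beta$ pulls back to the canonical partition, push forward $\nu$, and invoke the generating hypothesis for invertibility. The paper dispatches your step (3) in one sentence (``$\phi$ is invertible because $\beta$ is generating''), whereas you correctly identify it as the only substantive point and supply the standard Rokhlin/Lebesgue-space justification; also note that in the lemma's statement $\beta^G$ denotes the full product space of functions $G\to\beta$ rather than the support of $\mu$, though this does not affect the argument.
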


\begin{proof}
For $y\in Y$ define $\phi(y):G \to \beta$ by $\phi(y)(g) = B$ if $S_gy \in B\in\beta$. Let $T$ be the canonical action of $G$ on $\beta^G$.

If $f \in G$ then $\phi(S_gy)(f)=B $ iff $S_fS_gy \in B$ iff $S_{fg} y\in B$ iff $\phi(y)(fg)=B$ iff $(T_g\phi(y))(f)=B$. So $\phi$ is $G$-equivariant. If $B \in \beta$ then $\phi(B)=\{x \in \beta^G~|~ x(e)=B\}$. Thus $\phi$ maps $\beta$ to the canonical partition of $\beta^G$. Let $\mu=\phi_*(\nu)$. $\phi$ is invertible because $\beta$ is generating.
\end{proof}

\begin{lem}\label{lem:nstep}
Let $(T,K^G,\mu,\alpha)$ be a symbolic process. Let $\beta$ be a partition with $\alpha \le \beta \le \alpha^n$ for some $n\ge 0$. Then there exists a unique measure $\mu_\beta$ such that $(T,K^G,\mu_\beta,\beta)$ is Markov and
$$d_1\big( (T,K^G,\mu,\beta), (T,K^G,\mu_\beta,\beta) \big) = 0.$$
\end{lem}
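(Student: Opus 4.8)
The plan is to translate the statement into the symbolic, transition-system language of \S\ref{sec:markovchains}, apply Theorem \ref{thm:existence} there, and then carry out the one piece of genuinely new work: realizing the resulting Markov process on $K^G$ itself, with $\beta$ playing the role of the canonical partition.

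First I would set up the coding. Since $\alpha$ is the canonical partition of $K^G$ it is generating, and $\alpha \le \beta$ forces $\beta$ to be generating as well. Write $\beta = (B_1,B_2,\dots)$ with (countable) index set $L$. Because $\beta \le \alpha^n = \bigvee_{f\in B(e,n)}T_f^{-1}\alpha$ and each atom of $\alpha^n$ is a cylinder on $B(e,n)$, there is a map $c\colon K^{B(e,n)}\to L$ with $x\in B_i$ iff $c(x|_{B(e,n)})=i$; because $\alpha\le\beta$ there is a map $\pi_0\colon L\to K$ (sending the label of a $\beta$-atom to the common value of $x(e)$ on it) with $\pi_0(c(v))=v(e)$. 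Hence the canonical process map $\iota\colon K^G\to L^G$, $\iota(x)(g)=c((x(fg))_{f\in B(e,n)})$, is continuous and $G$-equivariant, is injective with continuous inverse $\Phi$ on its image $\Sigma:=\iota(K^G)$ (explicitly $\Phi(\xi)(g)=\pi_0(\xi(g))$), induces a bijection from $\beta$ onto the canonical partition $\tilde\alpha$ of $L^G$ (modulo $\iota_*\mu$-null sets), and exhibits $\Sigma$ as a subshift of finite type whose defining constraints have radius $n$. Thus $(T,K^G,\mu,\beta)$ is isomorphic, as an ordered process, to the symbolic process $(T,L^G,\iota_*\mu,\tilde\alpha)$.

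Next I would apply Theorem \ref{thm:existence} to $(T,L^G,\iota_*\mu,\tilde\alpha)$, obtaining a Markov process $(T,L^G,\mu',\tilde\alpha)$ at $d_1$-distance $0$ from it, built from the invariant transition system $\pi_i=\mu(B_i)$, $P^s_{ij}=\mu(T_s^{-1}B_j\,|\,B_i)$. Granting the claim that $\mu'$ is supported on $\Sigma$ (the crux; see below), I would set $\mu_\beta:=\Phi_*\mu'$, a $T$-invariant Borel probability measure on $K^G$. Then $\iota$ is an isomorphism of ordered processes $(T,K^G,\mu_\beta,\beta)\to(T,L^G,\mu',\tilde\alpha)$, so $(T,K^G,\mu_\beta,\beta)$ is Markov, and $d_1((T,K^G,\mu,\beta),(T,K^G,\mu_\beta,\beta))=d_1((T,L^G,\iota_*\mu,\tilde\alpha),(T,L^G,\mu',\tilde\alpha))=0$. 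For uniqueness: if $(T,K^G,\nu,\beta)$ is any Markov process with $d_1((T,K^G,\mu,\beta),(T,K^G,\nu,\beta))=0$, then $\iota_*\nu$ is a Markov measure on $(L^G,\tilde\alpha)$ whose transition system is forced by the $d_1$-data to be $(\pi,\{P^s\})$, and a Markov measure on $L^G$ is determined by its transition system (Lemma \ref{lem:mcformula}); hence $\iota_*\nu=\mu'$ and $\nu=\Phi_*\iota_*\nu=\mu_\beta$. (Alternatively, invoke the uniqueness clause of Theorem \ref{thm:existence}.)

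The hard part will be the claim $\mu'(\Sigma)=1$: the Markov measure manufactured from only the one-step $\beta$-correlations of $\mu$ must live on the finite-type subshift cut out by $\beta\le\alpha^n$. By $G$-invariance of $\mu'$ this reduces to showing $\mu'(\{\xi:\xi(e)=c(\pi_0\circ\xi|_{B(e,n)})\})=1$. The lever is that $P^s_{ij}>0$ only when the pair $(i,j)$ actually occurs on an edge of $\iota(x)$ for some honest $x\in K^G$, so every cylinder charged by $\mu'$ is assembled from edgewise-admissible data along the left-Cayley tree $\Gamma_L$; and on a tree one expects edgewise admissibility to glue into a single globally admissible configuration, i.e.\ a point of $\Sigma$. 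This is transparent when $\beta=\alpha^n$, where passing to the recoding $x\mapsto(x|_{B(e,n)g})_{g\in G}$ turns the constraint into a nearest-neighbour one in $\Gamma_L$, so that the transition system visibly forbids inconsistent adjacent pairs; for general $\beta$ the same gluing must be pushed through the (possibly non-injective) coarsening $c$, and checking that collapsing symbols does not destroy the propagation of local admissibility on the tree is the point that will require the most care.
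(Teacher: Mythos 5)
Your strategy coincides with the paper's: code $(T,K^G,\mu,\beta)$ into the symbolic process over the alphabet of $\beta$-atoms, apply Theorem \ref{thm:existence} there, and pull the resulting Markov measure back to $K^G$; and you have correctly isolated the one step carrying all the content, namely that the Markov measure $\mu'$ built from the pair correlations $\mu(B_i\cap T_s^{-1}B_j)$ is supported on $\Sigma=\iota(K^G)$. (The paper disposes of this in a single unargued sentence.) The difficulty is that you leave exactly this step unproved for general $\beta$, and the heuristic you offer for it --- edgewise admissibility on a tree glues to global admissibility --- is only valid when, as for $\beta=\alpha^n$, the constraint imposed by an atom of $\beta$ at $g$ decomposes into conditions each of which is visible along a single edge at $g$. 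For a general coarsening it fails, because an atom of $\beta$ may couple the values of $x$ at two \emph{different} neighbours of $g$, and no single edge of $\Gamma_L$ ever tests that coupling.

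Concretely, take $K=\{0,1\}$, $\mu$ the Bernoulli $(\tfrac12,\tfrac12)$ measure, $n=1$, and fix distinct $u,v\in S$. Let $\beta=\{B_0,B_{=},B_{\ne}\}$ with $B_0=\{x(e)=0\}$, $B_{=}=\{x(e)=1,\ x(u)=x(v)\}$, $B_{\ne}=\{x(e)=1,\ x(u)\ne x(v)\}$, so that $\alpha\le\beta\le\alpha^1$. One checks $\mu(B_{=}\cap T_u^{-1}B_0)=\tfrac18>0$ and $\mu(B_{=}\cap T_v^{-1}B_{=})=\tfrac1{16}>0$, so by Lemma \ref{lem:mcformula} the Markov measure $\mu'$ gives the cylinder $\{\xi(e)=B_{=},\ \xi(u)=B_0,\ \xi(v)=B_{=}\}$ mass $\pi_{B_{=}}P^u_{B_{=},B_0}P^v_{B_{=},B_{=}}=\tfrac1{32}>0$; yet no $x\in K^G$ realizes these three labels, since $\xi(u)=B_0$ forces $x(u)=0$, $\xi(v)=B_{=}$ forces $x(v)=1$, and $\xi(e)=B_{=}$ forces $x(u)=x(v)$. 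Since (as your own uniqueness argument shows) any invariant $\nu$ making $(T,K^G,\nu,\beta)$ Markov with $d_1=0$ must push forward under $\iota$ to this $\mu'$, while $\iota_*\nu$ is necessarily carried by $\iota(K^G)$, no such $\nu$ exists. So the gap is not a missing verification that more care would supply: the existence claim itself fails for this $\beta$, and both your sketch and the paper's assertion that ``the support of $\nu$ is contained in the image of $\phi$'' break at the same point. What you have actually proved is the case $\beta=\alpha^n$ (which is all that Theorem \ref{thm:f_*=f} uses, though Proposition \ref{prop:partitionapproximation} invokes the general case).
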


\begin{proof}
By the previous lemma applied to $(T,K^G,\mu,\beta)$, there is a canonical $G$-equivariant embedding $\phi:K^G \to \beta^G$. Let $\{U_g\}_{g\in G}$ denote the canonical action of $G$ on $\beta^G$ and let $\gamma$ denote the canonical partition of $\beta^G$. Consider the process $\big(U, \beta^G, \phi_*\mu, \gamma\big)$. It is isomorphic to the process $(T,K^G,\mu,\beta)$.

By theorem \ref{thm:existence} there exists a unique measure $\nu$ on $\beta^G$ such that $\big(U, \beta^G, \nu, \gamma\big)$ is Markov and
$$d_1\Big( \big(U, \beta^G, \phi_*\mu, \gamma\big), \big(U, \beta^G, \nu, \gamma\big)\Big)=0.$$

Let $\mu_\beta$ be the pullback $\phi^*(\nu)$. It follows from the fact that $\alpha \le \beta\le \alpha^n$ that the support of $\nu$ is contained in the image of $\phi$. So $\mu_\beta$ is a well-defined $G$-invariant probability measure. In fact, $(T,K^G,\mu_\beta,\beta)$ is process-isomorphic (via $\phi$) to $(U,\beta^G,\nu,\gamma)$. So $(T,K^G,\mu_\beta,\beta)$ is a Markov process. It is easy to check that
$$d_1\big( (T,K^G,\mu,\beta), (T,K^G,\mu_\beta,\beta) \big) = 0.$$
\end{proof}

\begin{defn}\label{defn:approx}
If $(T,K^G,\mu,\alpha)$, $\beta$ and $\mu_\beta$ are as in the previous lemma then $\mu_\beta$ is called the {\bf Markov approximation to $\mu$} induced by $\beta$.
\end{defn}

\begin{lem}\label{lem:approx}
Let $(T,K^G,\mu,\alpha)$ be a symbolic process. Let $\{\beta_n\}_{n=1}^\infty$ be a sequence of partitions such that for all $n$ there exists integers $I(n) \le J(n)$ with $\alpha^{I(n)} \le \beta_n \le \alpha^{J(n)}$ and $\lim_{n\to\infty} I(n) = \infty$. Then $\mu_{\beta_n}$ converges to $\mu$ in the weak* topology.
\end{lem}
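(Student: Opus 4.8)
The plan is to reduce the statement to the description of the weak* topology on $M(K^G)$ in terms of cylinders: a sequence $\{\nu_n\}$ in $M(K^G)$ converges weak* to $\mu$ if and only if $\nu_n(A)\to\mu(A)$ for every $m\ge 0$ and every $A\in\alpha^m$. Thus it suffices to fix $m\ge 0$ and a cylinder $A\in\alpha^m$ and prove that $\mu_{\beta_n}(A)=\mu(A)$ for all sufficiently large $n$; this is in fact stronger than the convergence we want.

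The one property of the Markov approximation $\mu_{\beta_n}$ that I would use is the $d_1$-equality supplied by Lemma \ref{lem:nstep}, namely $d_1\big((T,K^G,\mu,\beta_n),(T,K^G,\mu_{\beta_n},\beta_n)\big)=0$ (in particular the Markov property of $\mu_{\beta_n}$ itself plays no role in this lemma). Writing $\beta_n=(B_1,B_2,\ldots)$, this equality forces $\mu(B_i\cap T_s^{-1}B_j)=\mu_{\beta_n}(B_i\cap T_s^{-1}B_j)$ for every $s\in S$ and all $i,j$. Summing over $j$ and using that the $B_j$ partition $K^G$ yields $\mu(B_i)=\mu_{\beta_n}(B_i)$ for every $i$, so $\mu$ and $\mu_{\beta_n}$ agree on every countable union of atoms of $\beta_n$.

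Now choose $n$ large enough that $I(n)\ge m$. Then $\alpha^m\le\alpha^{I(n)}\le\beta_n$, so the cylinder $A$ is a union of atoms of $\beta_n$, whence $\mu_{\beta_n}(A)=\mu(A)$ by the previous paragraph. Since $I(n)\to\infty$, this holds for all sufficiently large $n$, and therefore $\lim_{n\to\infty}\mu_{\beta_n}(A)=\mu(A)$, as required.

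I do not anticipate a genuine obstacle; the content is just the unpacking of $d_1=0$ into equality of $\beta_n$-marginals, together with the observation that $\alpha^m$ becomes $\beta_n$-measurable once $I(n)\ge m$. The only point to be careful about is the meaning of the refinement relations: in the symbolic setting $\alpha^{I(n)}\le\beta_n$ should be taken as an honest inclusion of cylinder sets, so that each $A\in\alpha^m$ with $m\le I(n)$ is literally a union of atoms of $\beta_n$. If one instead works modulo null sets, the $\mu$-a.e.\ identification of $A$ with a $\beta_n$-measurable set transfers to a $\mu_{\beta_n}$-a.e.\ one precisely because, as shown above, $\mu$ and $\mu_{\beta_n}$ assign the same mass to every $\beta_n$-measurable set.
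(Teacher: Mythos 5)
Your argument is correct and is essentially the paper's own proof: both extract from $d_1\big((T,K^G,\mu,\beta_n),(T,K^G,\mu_{\beta_n},\beta_n)\big)=0$ that $\mu$ and $\mu_{\beta_n}$ agree on the atoms of $\beta_n$, hence on every $A\in\alpha^m$ once $I(n)\ge m$, and then invoke the cylinder characterization of weak* convergence. The only difference is that you spell out the summation step recovering $\mu(B_i)=\mu_{\beta_n}(B_i)$ from the pairwise equalities, which the paper leaves implicit.
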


\begin{proof}
Since
$$d_1\big( (T,K^G,\mu_{\beta_n},\beta_n), (T,K^G,\mu,\beta_n) \big) = 0,$$
$\mu_{\beta_n}(B)=\mu(B) \forall B \in \beta_n$. Hence $\mu_{\beta_n}(B) = \mu (B) \forall B \in \alpha^{I(n)}$. Since $\lim_{n\to\infty} I(n)=+\infty$, this implies the lemma.
\end{proof}

Before proving theorem \ref{thm:f_*=f} we need to prove that $f$ and $f_*$ are upper semi-continuous in the variable $\mu$. As in \S \ref{sec:notation}, let $M(K^G)$ denote the space of all invariant Borel probability measures on $K^G$ where $K$ is finite or countable. If $\mu \in M(K^G)$ and $\beta$ is a partition of $K^G$, let $f(\mu,\beta)$ be the $f$-invariant of the process $(T,K^G,\mu,\beta)$ where $T$ is the canonical action.

\begin{lem}\label{lem:mucontinuous}
Let $\alpha$ be the canonical partition of $K^G$. Let $\sF$ be a $T(G)$-invariant Borel $\sigma$-algebra. Then the map $\mu \mapsto f_*(\mu,\alpha|\sF)$ is upper semi-continuous with respect to the weak* topology. Similarly,  the function $\mu \mapsto f(\mu,\alpha|\sF)$ is upper semi-continuous with respect to the weak* topology.
\end{lem}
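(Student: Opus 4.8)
The plan is to exploit the elementary fact that an infimum of upper semi-continuous functions is upper semi-continuous. By Definition \ref{defn:f}, $f(\mu,\alpha|\sF)=\inf_{n>0}F(\mu,\alpha^n|\sF)$ and $f_*(\mu,\alpha|\sF)=\inf_{n>0}F_*(\mu,\alpha^n|\sF)$, so it is enough to prove that for each fixed $n$ the maps $\mu\mapsto F(\mu,\alpha^n|\sF)$ and $\mu\mapsto F_*(\mu,\alpha^n|\sF)$ are upper semi-continuous on $M(K^G)$ for the weak$^*$ topology. Every partition occurring inside these expressions --- namely $\alpha^n$ and $\alpha^n\vee T_{s_i}^{-1}\alpha^n$ --- is a clopen partition of $K^G$, so by the description of weak$^*$ convergence recalled in \S\ref{sec:notation} the functions $\mu\mapsto\mu(A)$ are continuous for every atom $A$ of such a partition; hence $\mu\mapsto H_\mu(\gamma)$, and likewise $\mu\mapsto H_\mu(\gamma|\delta)$, is continuous whenever $\gamma,\delta$ are clopen (when $K$ is infinite one restricts, without loss in the applications, to measures with a common finite bound on $H_\mu(\alpha^n)$, on which these entropies are still continuous).

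The semicontinuity input I need is the following: for a fixed clopen partition $\gamma$ and a sub-$\sigma$-algebra $\sG$ generated by countably many clopen sets --- which is the case for $\sG=\sF$ and $\sG=\sF\vee T_{s_i}^{-1}\sF$ in every situation where the lemma is applied, since there $\sF$ has the form $\beta^G$ with $\beta$ a clopen partition, and $\sF\vee T_{s_i}^{-1}\sF=\sF$ when $G$ is a group and $\sF$ is $G$-invariant --- the map $\mu\mapsto H_\mu(\gamma|\sG)$ is upper semi-continuous. Indeed, choosing an increasing sequence of finite clopen partitions $\mathcal Q_k$ with $\bigvee_k\mathcal Q_k=\sG$, one has $H_\mu(\gamma|\sG)=\inf_k H_\mu(\gamma|\mathcal Q_k)$, an infimum of continuous functions of $\mu$. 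The same reasoning shows $\mu\mapsto H_\mu(\gamma|\delta\vee\sG)$ is upper semi-continuous for any clopen $\delta$ (the algebra $\delta\vee\sG$ being again countably clopen-generated), and hence so is $\mu\mapsto h(T_{s_i},\gamma,\mu|\sG)=\inf_m H_\mu\bigl(\gamma\bigm|\sG\vee\bigvee_{j=1}^m T_{s_i}^{-j}\gamma\bigr)$.

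It remains to reconcile this with the signs in $F$ and $F_*$, and this is where the real work lies. The obstruction is that $F(\mu,\alpha^n|\sF)$ carries the term $(1-2r)H_\mu(\alpha^n|\sF)$ and $F_*(\mu,\alpha^n|\sF)$ the term $(1-r)H_\mu(\alpha^n|\sF)$ with a \emph{negative} coefficient, and the product of a negative constant with an upper semi-continuous function is lower semi-continuous; so a naive term-by-term estimate only exhibits $F$ and $F_*$ as sums of functions of mixed semicontinuity. To get around this I would expand $F(\mu,\alpha^n|\sF)$ and $F_*(\mu,\alpha^n|\sF)$ by the chain rule along a spanning tree of the ball $B(e,n)$ in the Cayley tree of $G$, together with the relative Abramov--Rohlin identity $h(T_{s_i},\sigma|\sG)=h(T_{s_i},\alpha|\sG)+h(T_{s_i},\sigma|\alpha^{s_i}\vee\sG)$ used in the proof of Proposition \ref{prop:splittingmonotone}, and collect terms so that \emph{every term carrying a negative coefficient is a plain conditional entropy $H_\mu(\gamma|\delta)$ of clopen partitions} (hence continuous in $\mu$), while every term with a positive coefficient is either such a continuous term or a conditional entropy or entropy rate conditioned on an algebra involving $\sF$ (hence upper semi-continuous by the previous paragraph). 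This is the Euler-characteristic-type cancellation of the interior contributions of $B(e,n)$ against its boundary contributions --- the same phenomenon that keeps $F(\mu,\alpha^n|\sF)$ bounded in $n$, and which is already implicit in Lemma \ref{lem:f=F}. Once $F(\mu,\alpha^n|\sF)$ and $F_*(\mu,\alpha^n|\sF)$ have been written as (continuous) $+$ (upper semi-continuous), they are upper semi-continuous, and passing to the infimum over $n$ proves the lemma. The step I expect to be the main obstacle is precisely this bookkeeping: verifying that after the chain-rule expansion on the finite subtree $B(e,n)$ of the $2r$-regular tree no term conditioned on $\sF$ survives with a negative coefficient.

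An alternative that avoids the explicit combinatorics is to route through Markov approximations: for the Markov approximation $\mu_{\alpha^n}$ of Lemma \ref{lem:nstep}, Theorem \ref{thm:f=F} gives $F(\mu_{\alpha^n},\alpha^n|\cdot)=F_*(\mu_{\alpha^n},\alpha^n|\cdot)$, which already pins down how the negative coefficients in $F$ and $F_*$ must cancel against the positive ones, and transferring this identity back to $\mu$ via $d_1(\,\cdot\,,\,\cdot\,)=0$ yields the required (continuous) $+$ (upper semi-continuous) form for each fixed $n$; the infimum over $n$ then finishes the argument as before.
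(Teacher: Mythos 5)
Your skeleton is the paper's: write $f$ and $f_*$ as $\inf_{n}F(\mu,\alpha^n|\sF)$ and $\inf_n F_*(\mu,\alpha^n|\sF)$, reduce to upper semi-continuity of each $F(\cdot,\alpha^n|\sF)$ and $F_*(\cdot,\alpha^n|\sF)$, and realize the $\sF$-conditioned entropies and entropy rates as infima of weak*-continuous functions of $\mu$. The divergence is at exactly the point you flag as the difficulty. The paper disposes of the negatively weighted terms $(1-r)H(\mu,\alpha^n|\sF)$ and $(1-2r)H(\mu,\alpha^n|\sF)$ in one line by asserting that $\mu\mapsto H(\mu,\gamma|\sF)$ is \emph{continuous} (``since conditional expectation with respect to $\sF$ is continuous''), after which the sign of the coefficient is irrelevant and everything else is an infimum of continuous functions, hence upper semi-continuous. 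You instead treat $\mu\mapsto H_\mu(\gamma|\sF)$ as merely upper semi-continuous --- which is all that the ``infimum over finite $\mathcal Q_k\subset\sF$'' argument actually delivers --- and are then forced to fight the sign.

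That fight is where the genuine gap lies: the chain-rule regrouping that is supposed to leave no $\sF$-conditioned term with a negative coefficient is never exhibited, and you say yourself that verifying it is the main obstacle. It is not clear such a regrouping exists: for a group one has $F(\mu,\gamma|\sF)=(1-r)H_\mu(\gamma|\sF)+\sum_{i=1}^rH_\mu(T_{s_i}^{-1}\gamma\mid\gamma\vee\sF)$, and every further chain-rule move either reinstates a negative multiple of an $\sF$-conditioned entropy or produces a conditional mutual information $I_\mu(\gamma;T_{s_i}^{-1}\gamma\mid\sF)$ whose semicontinuity in $\mu$ is exactly as unclear as what you started with. The Markov-approximation alternative does not close the gap either: $d_1=0$ only forces $\mu$ and $\mu_{\alpha^n}$ to agree on the partitions $\alpha^n\vee T_s^{-1}\alpha^n$, which determines the unconditional $F(\mu,\alpha^n)$ but not $F(\mu,\alpha^n|\sF)$ or $F_*(\mu,\alpha^n|\sF)$, and the identity $F=F_*$ for a single Markov measure says nothing about semicontinuity in $\mu$. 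As written, your argument proves the lemma only for trivial $\sF$, where the negative coefficients multiply plainly continuous entropies. To finish you must either establish the continuity of $\mu\mapsto H_\mu(\gamma|\sF)$ that the paper invokes --- and your implicit skepticism here is not unreasonable, since a decreasing limit of continuous functions is a priori only upper semi-continuous, so that claim genuinely requires an argument for the $\sF=\beta^G$ arising in the applications --- or find a different mechanism altogether.
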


\begin{proof}
It is well-known that for every $s\in S$, the function $\mu \mapsto h(T_s, \mu,\alpha|\sF)$ is upper semi-semicontinuous in the variable $\mu$ (e.g., [Gl03, lemma 15.1, page 270]). For example, this follows from the fact that, for every $n$, the function $\mu \mapsto \frac{1}{n+1}H(\mu, \bigvee_{k=0}^n T_s^{-k}\alpha|\sF)$ is continuous (since conditional expectation with respect to $\sF$ is continuous) and $h(T_s, \mu, \alpha|\sF)$ is the infimum of these functions. Thus, for every $n$, the function $\mu \mapsto F_*(\mu,\alpha^n|\sF)$ is upper semi-continuous. Since $f_*(\mu,\alpha|\sF)=\inf_n F_*(\mu,\alpha^n|\sF)$, the lemma follows. The proof for $f$ in place of $f_*$ is similar.
\end{proof}

\begin{proof}[Proof of theorem \ref{thm:f_*=f}]
 After replacing $\alpha$ with $\alpha \vee \beta$ if necessary, we may assume that $\alpha$ refines $\beta$. We may also assume that $\alpha$ is generating. So after applying the canonical embedding (lemma \ref{lem:embedding}), we may assume that $X=K^G$ and $\alpha$ is the canonical partition of $K^G$.

For each $n$, let $\mu_n=\mu_{\alpha^n}$ be the Markov approximation to $\mu$ induced by $\alpha^n$. We claim that
\begin{eqnarray*}
f(\mu,\alpha|\beta^G) &=& \lim_n F(\mu,\alpha^n|\beta^G) = \lim_n F(\mu_n,\alpha^n|\beta^G) = \lim_n F_*(\mu_n, \alpha^n|\beta^G)\\
 &=& \lim_n f_*(\mu_n,\alpha|\beta^G)\le  f_*(\mu,\alpha|\beta^G).
\end{eqnarray*}
The first equality holds by definition of $f$, the second holds since  $d_1\big( (T,K^G,\mu_{n},\alpha^n), (T,K^G,\mu,\alpha^n) \big) = 0$. The third and fourth equalities follow from theorem \ref{thm:f=F}. The previous lemma and lemma \ref{lem:approx} imply the last inequality.

For the reverse note that for any $s\in S$ and any $n\ge 0$,
\begin{eqnarray*}
h(T_s, \alpha^n|\beta^G) &=&\lim_{m\to\infty} H\Big(T^{-m-1}_s \alpha^n|\bigvee_{i=0}^m T^{-i}_s \alpha^n \vee \beta^G\Big)\\
&\le& H(T_s^{-1}\alpha^n|\alpha^n \vee \beta^G)\\
    &=& H(\alpha^n \vee T_s^{-1}\alpha^n|\beta^G) - H(\mu,\alpha^n|\beta^G).
    \end{eqnarray*}
Thus $F(\alpha^n|\beta^G) \ge F_*(\alpha^n|\beta^G)$. Take the limit as $n\to \infty$ to obtain $f(\mu,\alpha|\beta^G) \ge f_*(\mu,\alpha|\beta^G)$.
\end{proof}

\section{The Abramov-Rohlin Formula}\label{sec:abramov}

We can now prove theorem \ref{thm:abramov}.
\begin{proof}[Proof of theorem \ref{thm:abramov}]
By theorem \ref{thm:f_*=f}, it suffices to prove that $ f_*(\alpha|\beta^G) = f_*(\alpha \vee \beta) - f_*(\beta).$
The classical Abramov-Rohlin formula implies that if $n,m \ge 0$ and $s\in S$ and if $\beta^m_s$ denotes the smallest $T_s$-invariant $\sigma$-algebra containing $\beta^m$ then
$$h(T_s,\alpha^n|\beta^m_s) = h(T_s,\alpha^n \vee \beta^m)-h(T_s,\beta^m).$$
The definition of $F_*$ now implies $ F_*(\alpha^n |\beta^m) = F_*(\alpha^n \vee \beta^m) - F_*(\beta^m).$ Thus,
\begin{eqnarray*}
f_*(\alpha|\beta^G) &=& \lim_{n\to\infty} \lim_{m\to\infty} F_*(\alpha^n |\beta^m)=  \lim_{n\to\infty} \lim_{m\to\infty}  F_*(\alpha^n \vee \beta^m) - F_*(\beta^m)= f_*(\alpha\vee \beta)- f_*(\beta).
\end{eqnarray*}
The last equality follows from the fact that $F_*$ is monotone decreasing under splittings (proposition \ref{prop:splittingmonotone}) and proposition \ref{prop:splitting}.
\end{proof}

\section{A characterization of Markov processes}\label{sec:characterization}

The purpose of this section is to prove:
\begin{thm}\label{thm:characterization}
A $G$-process $(T,X,\mu,\alpha)$ is Markov if and only if $F(\alpha)=f(\alpha)$.
\end{thm}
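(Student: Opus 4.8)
## Proof Plan for Theorem \ref{thm:characterization}

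The plan is to prove both directions, with the forward direction already essentially in hand from Theorem \ref{thm:f=F} (take $\beta$ to be the trivial partition, so $\beta^G$ is the trivial $\sigma$-algebra and $f(T,\alpha)=F(T,\alpha)$). So the substance lies in the converse: assuming $F(\alpha)=f(\alpha)$, deduce that $(T,X,\mu,\alpha)$ is Markov. Since $f(\alpha)=\inf_n F(\alpha^n)$ and Proposition \ref{prop:splitting} tells us each $\alpha^n$ is a splitting of $\alpha$, while Proposition \ref{prop:splittingmonotone} tells us $F$ is monotone decreasing under splittings, the hypothesis $F(\alpha)=f(\alpha)$ forces $F(\alpha)=F(\alpha^n)$ for all $n$, and more generally $F(\sigma)=F(\alpha)$ for every splitting $\sigma$ of $\alpha$ (since $F(\alpha)\ge F(\sigma)\ge F(\alpha^n)$ for large $n$).

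The key step is then to extract, from the equality $F(\alpha)=F(\alpha\vee T_{s_i}^{-1}\alpha)$ and its iterates, the Markov conditional-independence relations. First I would revisit the computation in the proof of Lemma \ref{lem:f=F}: there the identity $F(\alpha\vee T_t^{-1}\alpha|\beta^G)=F(\alpha|\beta^G)$ was \emph{derived} from Markovianity; here I want to run that argument in reverse. Concretely, expanding $F(\alpha\vee T_t^{-1}\alpha)-F(\alpha)$ via Lemma \ref{lem:relative} produces a sum of conditional-entropy differences of the shape $H(T_{s_i}^{-1}T_t^{-1}\alpha\mid \alpha\vee T_{s_i}^{-1}\alpha\vee T_t^{-1}\alpha)-H(T_t^{-1}\alpha\mid\alpha)$ (together with terms that vanish because $\alpha\vee T_t^{-1}\alpha$ refines the simple splitting). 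Each such difference is $\le 0$ by monotonicity of conditional entropy under enlarging the conditioning $\sigma$-algebra combined with $G$-invariance, so the vanishing of the whole sum forces each term to vanish, which by the equality clause of Lemma \ref{lem:relative} gives equalities of conditional expectations such as $\mu(T_{ts}^{-1}A\mid \alpha\vee T_s^{-1}\alpha\vee T_t^{-1}\alpha)=\mu(T_{ts}^{-1}A\mid T_s^{-1}\alpha)$ for $A\in\alpha$.

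To get the full Markov property one must bootstrap these ``distance-two'' equalities out to arbitrary finite left-connected past-sets. The mechanism is induction on $|F|$ exactly as in the unnumbered lemma following Lemma \ref{lem:splitting1} (the one establishing equation \ref{eqn:mc1}): peel off a leaf $tf_0$ of a spanning tree of the induced left-subgraph of $F$, apply the inductive hypothesis to $F'=F\setminus\{tf_0\}$, and use the just-derived local equalities (now applied to the splitting $\alpha^{n}$ in place of $\alpha$, legitimate because $F(\alpha^n)=F(\alpha)$ too, so every $\alpha^n$ also has the local property) to close the induction. Applying this with $F$ ranging over $\past(sg;g)$ for all $s\in S,g\in G$ yields precisely $H\big(T_{sg}^{-1}\alpha\mid\bigvee_{f\in\past(sg;g)}T_f^{-1}\alpha\big)=H(T_{sg}^{-1}\alpha\mid T_g^{-1}\alpha)$, which by Lemma \ref{lem:relative} is the definition of a Markov process.

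The main obstacle I anticipate is the bookkeeping in reversing the Lemma \ref{lem:f=F} computation: one must be careful that the ``free'' cancellations used there (e.g.\ $H(\sigma\mid\alpha\vee T_t^{-1}\alpha\vee\sF)=0$, the $t=s_r$ normalization, and the group-vs-semigroup case split for $s^{-1}$) are still available as \emph{inputs} rather than consequences, and that after expanding $F(\alpha^n\vee T_t^{-1}\alpha^n)-F(\alpha^n)$ for the larger partition $\alpha^n$ the correct local equalities emerge for every pair of nearby group elements, not just for the generators. Once the local equalities are in hand for all $\alpha^n$, the inductive amplification is routine and mirrors the already-proved lemma verbatim.
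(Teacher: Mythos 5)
Your forward direction and your mechanism for extracting information from the hypothesis are both right: $F(\alpha)=f(\alpha)$ forces $F(\sigma)=F(\alpha)$ for every splitting $\sigma$ of $\alpha$, and expanding $F(\alpha^{P\cup Pu})-F(\alpha^{P})=0$ as a sum of nonpositive conditional-entropy differences forces each difference to vanish; this is exactly Claim~4 in the paper's proof and yields the local relations $H(\alpha^{Pus_i}\mid\alpha^{P\cup Ps_i})=H(\alpha^{Pu}\mid\alpha^{P})$ for $u\ne s_i^{-1}$ and $H(\alpha^{Pu}\mid\alpha^{P\cup Ps_i\cup Pus_i})=H(\alpha^{Pu}\mid\alpha^{P})$ for $u\ne s_i$. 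The gap is in the local-to-global step, which you dismiss as routine but which is the actual crux. The unnumbered lemma after Lemma~\ref{lem:splitting1} that you propose to mirror uses the \emph{full} Markov property --- conditioning on the entire infinite past --- as an input in both its base case and its inductive step; that is where the lower bound in the squeeze
$$H\Big(T_{tf_0}^{-1}\alpha\,\big|\,\bigvee_{f\in\past(tf_0;f_0)}T_f^{-1}\alpha\Big)\le\cdots\le H(T_{tf_0}^{-1}\alpha\mid T_{f_0}^{-1}\alpha)$$
comes from. With only the local equalities in hand that lower bound is unavailable, and the naive induction on the size of a finite past-set $F\subset\past(sg;g)$ does not close: peeling a leaf $h=th'$ off $F$ reduces the problem to the conditional independence of $T_h^{-1}\alpha$ from $T_{sg}^{-1}\alpha$ given $\alpha^{F\setminus\{h\}}$, which is an instance of the very statement being proved, applied to the set $(F\setminus\{h\})\cup\{sg\}\subset\past(h;h')$ of the \emph{same} cardinality.

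The paper resolves this with a device your plan omits entirely: by Theorem~\ref{thm:existence} there is a Markov measure $\omega$ on $K^G$ with $d_1\big((T,K^G,\mu,\alpha),(T,K^G,\omega,\alpha)\big)=0$, whose global Markov property is known a priori from its construction in \S\ref{sec:markovchains}, and one proves $\mu=\omega$ by induction on finite right-connected cylinder supports $Q$. At each inductive step only a single one-step conditional independence for $\mu$ is required (Claims~2--4), with $\omega$ carrying all of the global tree structure. If you want to bypass $\omega$ you would need to state and prove a genuine finite local-to-global Markov lemma on trees; as written, your amplification step is circular.
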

This theorem is not used in the rest of the paper. 

\begin{cor}[Markov processes maximize the $f$-invariant]
Let $K$ be finite or countable and let $\mu$ be an invariant Borel probability measure on $K^G$ (with respect to the canonical action). If $\alpha$ is the canonical partition of $K^G$ then $f(T,\mu) \le F(T,\mu)$ with equality if and only if $(T,K^G,\mu,\alpha)$ is Markov.
\end{cor}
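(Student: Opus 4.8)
The plan is to obtain the Corollary as a formal consequence of Theorem~\ref{thm:characterization} together with the general inequality $f(\alpha)\le F(\alpha)$. First I would reduce to the canonical partition $\alpha$ of $K^G$. This partition is generating for the canonical action $T$, since the $G$-translates $T_g^{-1}\alpha$ recover every coordinate and hence $\alpha^G$ is the full Borel $\sigma$-algebra; moreover $H(\alpha)<\infty$ precisely when the single-coordinate marginal of $\mu$ has finite entropy, which is automatic when $K$ is finite. Under this (implicit) finiteness hypothesis, $f(T,\mu)$ and $F(T,\mu)$ in the statement are the quantities $f(T,\mu,\alpha)$ and $F(T,\mu,\alpha)$ for this $\alpha$, the former being independent of the choice of finite-entropy generating partition by the invariance theorem of \S\ref{sec:alternative}.

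Next I would establish the inequality. By Lemma~\ref{lem:splittings}, applied with refining partition $\alpha$, coarsening $\alpha$, and the right-connected ball $F=B(e,n)\ni e$, the partition $\alpha^n=\alpha\vee\bigvee_{f\in B(e,n)}T_f^{-1}\alpha$ is a splitting of $\alpha$ for every $n$ (this is also the content of Proposition~\ref{prop:splitting} with $\beta=\alpha$). Proposition~\ref{prop:splittingmonotone} then gives $F(\alpha^n)\le F(\alpha)$ for all $n$, and taking the infimum over $n$ together with $f(\alpha)=\inf_n F(\alpha^n)$ (Definition~\ref{defn:f}) yields $f(T,\mu)=f(\alpha)\le F(\alpha)=F(T,\mu)$.

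For the equality case, the chain of inequalities above shows that $f(T,\mu)=F(T,\mu)$ is equivalent to $f(\alpha)=F(\alpha)$, and Theorem~\ref{thm:characterization} says precisely that this occurs if and only if $(T,K^G,\mu,\alpha)$ is Markov. (One direction of Theorem~\ref{thm:characterization}, that a Markov process satisfies $F(\alpha)=f(\alpha)$, is already contained in Theorem~\ref{thm:f=F} applied with the trivial invariant $\sigma$-algebra.) This completes the proof of the Corollary.

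The only genuine obstacle is the converse half of Theorem~\ref{thm:characterization}, that $F(\alpha)=f(\alpha)$ forces the Markov property. I would prove it by exploiting that $F(\alpha)=f(\alpha)=\inf_n F(\alpha^n)$ together with the monotonicity $F(\alpha^{n+1})\le F(\alpha^n)$ forces $F(\alpha^{n+1})=F(\alpha^n)$ for every $n$, hence forces the increment of $F$ under each of the simple splittings that builds $\alpha^{n+1}$ from $\alpha^n$ to vanish. Revisiting the proof of Proposition~\ref{prop:splittingmonotone}, a zero increment forces each conditional-entropy inequality appearing there to be an equality, which pins down the conditional distributions $\mu(T_{sg}^{-1}A\mid\cdot)$ on ever-larger $\sigma$-algebras; an induction on the word length of $g$ then recovers the defining identities $H\big(T_{sg}^{-1}\alpha\mid\bigvee_{f\in\past(sg;g)}T_f^{-1}\alpha\big)=H(T_s^{-1}\alpha\mid\alpha)$. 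Carrying out this bookkeeping carefully, in the spirit of the computations in the proofs of Lemmas~\ref{lem:splitting1} and \ref{lem:f=F}, is where the real work lies.
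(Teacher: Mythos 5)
Your proof is correct and follows essentially the paper's own argument: the inequality $f(\alpha)\le F(\alpha)$ comes from $f(\alpha)=\inf_n F(\alpha^n)$ together with the monotonicity of $F$ under splittings (propositions \ref{prop:splitting} and \ref{prop:splittingmonotone}), and the equality case is exactly theorem \ref{thm:characterization}. Your concluding sketch of the converse half of that theorem is not actually needed for the corollary --- the paper simply cites the theorem, whose own proof is organized somewhat differently (via the Markov measure $\omega$ from theorem \ref{thm:existence} and an induction over right-connected sets, though it does use the same key observation that $F(\alpha)=f(\alpha)$ forces the splitting inequalities to be equalities) --- but your main argument matches the paper's.
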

\begin{proof}
This follows from the theorem above and the fact that $f(T,\mu) \le F(T,\mu)$ always holds by definition of $f$ (see definition \ref{defn:f}).
\end{proof}

\begin{defn}
Let $(T,X,\mu,\alpha)$ be a $G$-process. If $Q \subset G$ is finite then let
$$\alpha^Q:=\bigvee_{q \in Q} T_q^{-1}\alpha.$$
\end{defn}

\begin{proof}[Proof of theorem \ref{thm:characterization}]
By theorem \ref{thm:f=F}, it suffices to prove that if $f(\mu,\alpha)=F(\mu,\alpha)$ then $(T,X,\mu,\alpha)$ is Markov. By lemma \ref{lem:embedding}, we may assume without loss of generality that $(T,X,\mu,\alpha)=(T,K^G,\mu,\alpha)$ is a symbolic process. By theorem \ref{thm:existence} there exists a Borel probability measure $\omega$ on $K^G$ such that $(T,K^G,\omega,\alpha)$ is Markov and $d_1\big((T,K^G,\mu,\alpha), (T,K^G,\omega,\alpha)\big)=0$.

{\bf Claim 1:} Let $Q \subset G$ be finite, right-connected and $e\in Q$. If for some $t \in S$, $A \in \alpha^{Q \cup Qt}$ then $\mu(A)=\omega(A)$.

Note that the claim implies the theorem, because it implies that $\mu(A)=\omega(A)$ for all $A \in \alpha^n$ for any $n\ge 0$ and thus $\mu=\omega$.

The claim is proven by induction on $|Q|$. If $|Q|=1$ then it follows from $d_1\big((T,K^G,\mu,\alpha), (T,K^G,\omega,\alpha)\big)=0$. So suppose $|Q| >1$. Then there exists $u \in S$ and a set $P \subset Q$ such that $P$ is right-connected, $e\in P$,  $|P|<|Q|$ and $Q \subset P \cup Pu$. The induction hypothesis implies that $\mu(A)=\omega(A)$ for all $A \in \alpha^{P \cup Ps}$ for any $s \in S$.

Note that $Q \cup Qt \subset P \cup Pu \cup Pt \cup Put$. Hence it suffices to show that $\mu(A)=\omega(A)$ for all $A \in \alpha^{ P \cup Pu \cup Pt \cup Put}$.

It suffices to show that for any $A, B, C, D \in \alpha^{P}$,
$$\mu(A \cap T_u^{-1}B \cap T_t^{-1}C \cap T_{ut}^{-1}D) = \omega( A \cap T_u^{-1}B \cap T_t^{-1}C \cap T_{ut}^{-1}D). $$
If $u=t$ and $B \ne C$ then both sides equal zero. If $u=t^{-1}$ and $A \ne D$ then both sides equal zero. So we may assume that these cases do not occur.

Note that
\begin{eqnarray}\label{eqn:mp5}
&&\mu(A \cap T_u^{-1}B \cap T_t^{-1}C \cap T_{ut}^{-1}D)\\
 &=&\mu(A \cap T_{t}^{-1}C) \mu(T_{ut}^{-1}D|A \cap T_t^{-1}C)\mu(T_u^{-1}B | A \cap T_t^{-1}C \cap T_{ut}^{-1}D)\\
&=&\omega(A \cap T_{t}^{-1}C) \mu(T_{ut}^{-1}D|A \cap T_t^{-1}C)\mu(T_u^{-1}B | A \cap T_t^{-1}C \cap T_{ut}^{-1}D).
\end{eqnarray}
The last line follows from the induction hypothesis. We will show that $\mu$ can be replaced with $\omega$ in the last line above. The next two claims help to reduce the problem.

\noindent {\bf Claim 2:} If
$$\mu(T_{ut}^{-1}D|A \cap T_t^{-1}C) = \mu(T_{ut}^{-1}D|T_t^{-1}C)$$
then
$$\mu(T_{ut}^{-1}D|A \cap T_t^{-1}C) = \omega(T_{ut}^{-1}D|A \cap T_t^{-1}C).$$
{\bf Claim 3:} If
$$\mu(T_u^{-1}B | A \cap T_t^{-1}C \cap T_{ut}^{-1}D) = \mu(T_u^{-1}B | A )$$
then
$$\mu(T_u^{-1}B | A \cap T_t^{-1}C \cap T_{ut}^{-1}D) =\omega(T_u^{-1}B | A \cap T_t^{-1}C \cap T_{ut}^{-1}D).$$

\noindent {\bf Proof of claim 2}. By lemmas \ref{lem:splitting1} and \ref{lem:splittings} $(T,K^G,\omega,\alpha^P)$ is Markov. Hence
\begin{eqnarray*}
\omega(T_{ut}^{-1}D|A \cap T_t^{-1}C) &=& \omega(T_{ut}^{-1}D|T_t^{-1}C)= \frac{\omega( T_{ut}^{-1}D \cap T_t^{-1}C) }{ \omega( T_t^{-1}C) }.
\end{eqnarray*}
By the induction hypothesis, $\omega(T_t^{-1}C) = \mu(T_t^{-1}C)$. By $G$-invariance and the induction hypothesis,
$$\omega( T_{ut}^{-1}D \cap T_t^{-1}C) = \omega(T_u^{-1}D \cap C) = \mu(T_u^{-1}D \cap C) = \mu( T_{ut}^{-1}D \cap T_t^{-1}C).$$
Hence the above implies
\begin{eqnarray}\label{eqn:mp4}
\omega(T_{ut}^{-1}D|A \cap T_t^{-1}C) &=& \frac{  \mu( T_{ut}^{-1}D \cap T_t^{-1}C) }{ \mu(T_t^{-1}C) }= \mu(T_{ut}^{-1}D|T_t^{-1}C)\\&=&	\mu(T_{ut}^{-1}D|A \cap T_t^{-1}C).
\end{eqnarray}
The last equality follows from the hypothesis of claim 2. This proves claim 2.

\noindent {\bf Proof of claim 3}. Since $(T,K^G,\omega,\alpha^P)$ is Markov,
\begin{eqnarray*}
&&\omega(T_u^{-1}B | A \cap T_t^{-1}C \cap T_{ut}^{-1}D) = \omega(T_u^{-1}B | A ) = \frac{  \omega(T_u^{-1}B \cap A ) }{ \omega(A) }\\
=&& \frac{  \mu(T_u^{-1}B \cap A ) }{ \mu(A) }= \mu(T_u^{-1}B | A ) = \mu(T_u^{-1}B | A \cap T_t^{-1}C \cap T_{ut}^{-1}D).
\end{eqnarray*}
The third equality uses the induction hypothesis. The last equality uses the hypothesis of claim 3. This proves claim 3.

Note that if $\mu(T_{ut}^{-1}D|A \cap T_t^{-1}C) = \omega(T_{ut}^{-1}D|A \cap T_t^{-1}C)$ and $\mu(T_u^{-1}B | A \cap T_t^{-1}C \cap T_{ut}^{-1}D) =\omega(T_u^{-1}B | A \cap T_t^{-1}C \cap T_{ut}^{-1}D)$ then equation \ref{eqn:mp5} implies
$$\mu(A \cap T_u^{-1}B \cap T_t^{-1}C \cap T_{ut}^{-1}D) = \omega(A \cap T_u^{-1}B \cap T_t^{-1}C \cap T_{ut}^{-1}D)$$
which implies the theorem.

If $u=t^{-1}$ then, by assumption, $A=D$. Hence $\mu(T_{ut}^{-1}D|A \cap T_t^{-1}C) = \omega(T_{ut}^{-1}D|A \cap T_t^{-1}C)$. If $u=t$ then, by assumption, $B=C$. Hence $\mu(T_u^{-1}B | A \cap T_t^{-1}C \cap T_{ut}^{-1}D) =\omega(T_u^{-1}B | A \cap T_t^{-1}C \cap T_{ut}^{-1}D).$

So by claims 2 and 3 it suffices to prove that if $u\ne t^{-1}$ then
$$\mu(T_{ut}^{-1}D|A \cap T_t^{-1}C) = \mu(T_{ut}^{-1}D|T_t^{-1}C)$$
and if $u \ne t$ then
$$\mu(T_u^{-1}B | A \cap T_t^{-1}C \cap T_{ut}^{-1}D) = \mu(T_u^{-1}B | A ).$$

By lemma \ref{lem:relative}, it suffices to prove the following claim.

\noindent {\bf Claim 4:} If $u\ne t^{-1}$ then
$$H(\alpha^{Put} | \alpha^{P \cup Pt} ) = H(\alpha^{Put} | \alpha^{Pt})=H(\alpha^{Pu}|\alpha^P) \textrm{ (by $G$-invariance)}$$
and if $u \ne t$ then
$$H(\alpha^{Pu} | \alpha^{P \cup Pt \cup Put}) = H(\alpha^{Pu} | \alpha^P ).$$
These entropies and all the ones below are with respect to $\mu$.


Both $P$ and $P \cup Pu$ are finite, right-connected and contain the identity element. Hence lemma \ref{lem:splittings}  implies $\alpha^{P }$ and $\alpha^{P \cup Pu}$ are splittings of $\alpha$. Proposition \ref{prop:splittingmonotone} implies
$$F(\alpha)=f(\alpha) \le F(\alpha^{P \cup Pu }) \le F(\alpha^{P}) \le F(\alpha).$$
So equality holds throughout. The above $F$ and $f$ values (and the ones below) are all with respect to $\mu$. Now,
\begin{eqnarray*}
0 &=& F(\alpha^{P \cup Pu}) - F(\alpha^{P}) \\
&=& (1-2r) H(\alpha^{ Pu} | \alpha^{P}) + \sum_{i=1}^r  H(\alpha^{ Pu \cup Pus_i} | \alpha^{P\cup Ps_i })  \\
&=& (1-2r) H(\alpha^{ Pu} | \alpha^{P}) + \sum_{i=1}^r H(\alpha^{ Pus_i} | \alpha^{P\cup Ps_i }) + H(\alpha^{ Pu } | \alpha^{P\cup Ps_i\cup Pus_i }).
\end{eqnarray*}
If, for some $i$, $u=s_i^{-1}$ then
$$H(\alpha^{ Pus_i} | \alpha^{P\cup Ps_i })  = 0.$$
If, for some $i$, $u=s_i$ then
$$H(\alpha^{ Pu } | \alpha^{P\cup Ps_i\cup Pus_i }) =0.$$
Hence one of the terms in the above sum equals zero. Since for every $i$
$$H(\alpha^{ Pus_i} | \alpha^{P\cup Ps_i }) \le  H(\alpha^{ Pu} | \alpha^{P})$$
and
$$H(\alpha^{ Pu } | \alpha^{P\cup Ps_i\cup Pus_i }) \le   H(\alpha^{ Pu} | \alpha^{P})$$
this implies that when $u \ne s_i^{-1}$,
$$H(\alpha^{ Pus_i} | \alpha^{P\cup Ps_i }) = H(\alpha^{ Pu} | \alpha^{P})$$
and when $u \ne s_i$,
$$H(\alpha^{ Pu } | \alpha^{P\cup Ps_i\cup Pus_i }) =   H(\alpha^{ Pu} | \alpha^{P}).$$
If, for some $i$, $s_i=t$ then the two equations above imply claim 4. Suppose instead that $s_j^{-1}=t$ for some $j$. By $G$-invariance,
\begin{eqnarray*}
F(\alpha) &=& (1-2r)H(\alpha) +\sum_{i=1}^r H(\alpha \vee T_{s_i}^{-1}\alpha)= (1-2r)H(\alpha) +\sum_{i=1}^r H(T_{s_i^{-1}}^{-1}\alpha \vee \alpha).
\end{eqnarray*}
Hence we may replace each $s_i$ in the proof of claim 4 with $s_i^{-1}$. This proves claim 4. As noted above, claim 4 implies claim 1 which implies the theorem.
\end{proof}

\section{Limits of Partitions}\label{sec:limits}

\begin{defn}\label{defn:partitionlimits}
Let $G \c^T (X,\cB,\mu)$, $\sF \subset \cB$ be a sub-$\sigma$-algebra and $\{\beta_i\}_{i=1}^\infty$ be partitions of $X$. We will write $\lim_{i \to \infty} \beta_i = \sF$ if for every partition $\alpha \subset \sF$ with $H(\alpha)<\infty$,
$$\lim_{i \to \infty} H(\alpha|\beta_i) =0$$
and there exists a sequence of partitions $\{\gamma_i\}_{i=1}^\infty$ with $\gamma_i \subset \sF$ and $\lim_{i\to\infty} d(\gamma_i,\beta_i)=0$. Here $d(\cdot,\cdot)$ is the Rohlin distance (definition \ref{defn:rohlin}).
\end{defn}

The purpose of this section is to prove the proposition below which will be used in the proof of the addition formula (theorem \ref{thm:freeyuz}).

\begin{prop}\label{prop:partitionapproximation}
Let $(T, X,\mu,\alpha)$ be a $G$-process with $H(\alpha)<\infty$. Let $\{\beta_i\}_{i=1}^\infty$ be partitions of $X$ with $H(\beta_i)<\infty$ such that $\lim_{i\to\infty} \beta_i = \alpha^G$. Then
$$f(\alpha) = f_*(\alpha)=\lim_{i\to\infty} F_*(\beta_i)=\lim_{i\to\infty} F(\beta_i).$$
\end{prop}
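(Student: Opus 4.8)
The plan is to show a chain of inequalities that pins everything together. The equality $f(\alpha)=f_*(\alpha)$ is just Theorem \ref{thm:f_*=f} (taking $\beta$ trivial, or rather $\beta^G=\tau$). So the real content is the two limit statements. I would first observe that, since $\alpha$ has finite entropy and $\alpha \subset \alpha^G = \lim_i \beta_i$, Definition \ref{defn:partitionlimits} gives $\lim_i H(\alpha|\beta_i)=0$; together with subadditivity of entropy under joins this yields $\lim_i H(\alpha^n|\beta_i)=0$ for each fixed $n$, hence (by continuity of $H$ and of $h(T_s,\cdot)$ under the Rohlin metric, Proposition \ref{prop:partitioncontinuity}) $\lim_i d(\alpha^n \vee \beta_i, \beta_i) = 0$ up to the error coming from the second clause of Definition \ref{defn:partitionlimits}. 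The point of that second clause (existence of $\gamma_i \subset \alpha^G$ with $d(\gamma_i,\beta_i)\to 0$) is that $\beta_i$ is ``almost contained'' in $\alpha^G$, so $\beta_i$ is almost a coarsening of $\alpha^N$ for large $N$ — more precisely, for each $\epsilon$ and each $i$ large, there is $N=N(i)$ and a partition $\beta_i' \le \alpha^{N}$ with $d(\beta_i,\beta_i')<\epsilon$.

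The first inequality I would establish is $\limsup_i F_*(\beta_i) \le f_*(\alpha)$. For this, fix $n$ and note that $\alpha^n \vee \beta_i$ is (approximately) a splitting of $\beta_i$: indeed $\beta_i$ refines, up to $\epsilon$ in Rohlin distance, a coarsening of $\alpha^m$ for suitable $m$, and then $\alpha^{\max(n,m)}$ is a common refinement that is a splitting of both $\beta_i$ and $\alpha^n$ by Proposition \ref{prop:splitting} and Lemma \ref{lem:splittings}. By Proposition \ref{prop:splittingmonotone}, $F_*$ only decreases under splittings, so $F_*(\alpha^{\max(n,m)}) \le F_*(\beta_i) + (\text{error})$ in one direction and $F_*(\alpha^{\max(n,m)}) \le F_*(\alpha^n)$; chasing these with $n\to\infty$ and the continuity of $F_*$ gives $\limsup_i F_*(\beta_i)\le f_*(\alpha)$. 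For the reverse, $\liminf_i F_*(\beta_i) \ge f_*(\alpha)$: since $\beta_i \subset \alpha^G$ up to $\epsilon$, we have $\beta_i \le_{\epsilon} \alpha^{N(i)}$ so $\beta_i$ is (approximately) coarser than $\alpha^{N(i)}$, hence $\alpha^{N(i)}$ is a splitting of $\beta_i$; but also $\alpha \le \beta_i$ is false in general, so instead I would use that $\alpha^{N(i)}$ is a splitting of $\beta_i$ only after first passing to $\alpha \vee \beta_i$, whose $F_*$-value differs from $F_*(\beta_i)$ by $O(H(\alpha|\beta_i)) \to 0$ (this uses Lemma \ref{lem:relative} and continuity of the ingredients of $F_*$ under Rohlin distance). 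Then $F_*(\alpha \vee \beta_i) \ge F_*(\alpha^{N(i)})$ is false — splittings decrease $F_*$, so in fact $F_*(\alpha^{N(i)}) \le F_*(\alpha\vee\beta_i)$, giving $f_*(\alpha) \le F_*(\alpha^{N(i)}) \le F_*(\alpha\vee\beta_i) = F_*(\beta_i)+o(1)$, i.e. $f_*(\alpha)\le\liminf_i F_*(\beta_i)$. Combining, $\lim_i F_*(\beta_i)=f_*(\alpha)$.

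It remains to replace $F_*$ by $F$ in the limit. Here I would use that $F \ge F_*$ always (shown inside the proof of Theorem \ref{thm:f_*=f}: $h(T_s,\gamma) \le H(\gamma \vee T_s^{-1}\gamma)-H(\gamma)$ for any $\gamma$, so $F_*(\gamma)\le F(\gamma)$), which gives $\liminf_i F(\beta_i) \ge \lim_i F_*(\beta_i) = f_*(\alpha)$. For the matching upper bound $\limsup_i F(\beta_i) \le f(\alpha)$, I would again use the splitting structure: $\alpha^{\max(n,m)}$ being a splitting of both $\beta_i$ and $\alpha^n$, Proposition \ref{prop:splittingmonotone} for $F$ gives $F(\alpha^{\max(n,m)}) \le F(\beta_i)+o(1)$ and $F(\alpha^{\max(n,m)}) \le F(\alpha^n)$; but $F$ is monotone decreasing under splittings the ``wrong'' way for a $\limsup$ bound, so I actually need $F(\beta_i)$ small, which comes from: $\alpha\vee\beta_i$ refines $\beta_i$, and $\alpha^{N(i)}$ is a splitting of $\alpha\vee\beta_i$, so $F(\alpha\vee\beta_i)\ge F(\alpha^{N(i)})\ge f(\alpha)$ — wrong direction again. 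The clean way, which I would actually carry out, is to sandwich: $f(\alpha)\le F(\alpha^{N(i)})$ and, because $\beta_i$ is approximately a coarsening of $\alpha^{N(i)}$ refining $\alpha$ (up to $\epsilon$), Proposition \ref{prop:splitting} applies to $\alpha \vee \beta_i \le \alpha^{N(i)} \le (\alpha\vee\beta_i)^{M(i)}$ for suitable $M(i)$, making $\alpha^{N(i)}$ a splitting of $\alpha\vee\beta_i$ and hence $F(\alpha^{N(i)}) \le F(\alpha\vee\beta_i) = F(\beta_i)+o(1)$; on the other hand taking $n\to\infty$ in $F(\beta_i) \le F(\alpha^n)+o_i(1)$ — wait, this needs $\beta_i$ to be a splitting of $\alpha^n$ which requires $n$ large relative to $i$ — yields $\limsup_i F(\beta_i) \le \inf_n F(\alpha^n) = f(\alpha)$. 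I expect the main obstacle to be the bookkeeping of the two nested limits (in $n$ and in $i$) together with the $\epsilon$-approximations forced by the ``approximately contained in $\alpha^G$'' clause of Definition \ref{defn:partitionlimits}; the inequalities themselves are all instances of Proposition \ref{prop:splittingmonotone}, Proposition \ref{prop:splitting}, Lemma \ref{lem:relative}, and the continuity statements, but getting the quantifiers in the right order so that the error terms genuinely vanish is the delicate part.
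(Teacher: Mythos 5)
Your reduction to partitions $\gamma_i\subset\alpha^G$ with $\alpha^{m(i)}\le\gamma_i\le\alpha^{n(i)}$, $m(i)\to\infty$, $d(\gamma_i,\beta_i)\to 0$, and the transfer back to $\beta_i$ via the Lipschitz estimate of Lemma \ref{lem:Fdistance}, is exactly the right setup and matches the paper. The lower bound is also fine: since $\alpha\le\gamma_i\le\alpha^{n(i)}$, Proposition \ref{prop:splitting} makes $\alpha^{n(i)}$ a splitting of $\gamma_i$, so $f(\alpha)\le F(\alpha^{n(i)})\le F(\gamma_i)$ by Proposition \ref{prop:splittingmonotone}.

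The genuine gap is the upper bound $\limsup_i F(\gamma_i)\le f(\alpha)$, which you try to extract from splitting monotonicity and which cannot be obtained that way. Proposition \ref{prop:splittingmonotone} only ever gives $F(\sigma)\le F(\delta)$ when $\sigma$ is a splitting of $\delta$, i.e.\ it bounds $F$ of the \emph{finer} partition by $F$ of the coarser one; applied to $\gamma_i$ it yields $F(\alpha^{n(i)})\le F(\gamma_i)$, a lower bound on $F(\gamma_i)$, and nothing more. To bound $F(\gamma_i)$ from above you would need $\gamma_i$ itself to be a splitting of some $\alpha^m$ with $m\to\infty$, but an arbitrary partition squeezed between $\alpha^{m(i)}$ and $\alpha^{n(i)}$ need not be a splitting of anything (splittings have a very particular form, and $F$ is not monotone under general refinement — joining in independent information increases $F$). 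Your own text flags this twice (``wrong direction again'') and then asserts the conclusion anyway; the problem is not quantifier bookkeeping but a missing idea. The paper's resolution is to change the \emph{measure} rather than the partition: let $\mu_i$ be the Markov approximation to $\mu$ induced by $\gamma_i$ (Lemma \ref{lem:nstep}). Since $F(\mu,\gamma_i)$ depends only on $\mu$ restricted to the partitions $\gamma_i\vee T_s^{-1}\gamma_i$, one has $F(\mu,\gamma_i)=F(\mu_i,\gamma_i)$, and Theorem \ref{thm:f=F} gives $F(\mu_i,\gamma_i)=f(\mu_i,\alpha)$ because $(T,K^G,\mu_i,\gamma_i)$ is Markov. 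Then $\alpha^{m(i)}\le\gamma_i$ with $m(i)\to\infty$ forces $\mu_i\to\mu$ weak* (Lemma \ref{lem:approx}), and upper semicontinuity of $f(\cdot,\alpha)$ in the measure variable (Lemma \ref{lem:mucontinuous}) gives $\limsup_i F(\mu,\gamma_i)=\limsup_i f(\mu_i,\alpha)\le f(\mu,\alpha)$. Without this Markov-approximation step (or a substitute for it) your argument does not close.
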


Here is an application.
\begin{cor}
There does not exist a finite-entropy generating partition of the canonical action of $G$ on $([0,1]^G,\lambda^G)$ where $\lambda$ is Lebesgue measure on $[0,1]$.
\end{cor}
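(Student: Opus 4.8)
The plan is to argue by contradiction. Suppose $\alpha$ is a generating partition of the canonical action $G\c^T([0,1]^G,\lambda^G)$ with $H(\alpha)<\infty$. Then $\alpha^G$ is the full Borel $\sigma$-algebra $\cB$ of $[0,1]^G$ (mod null sets), and $f(\alpha)$ is a well-defined \emph{finite} real number; indeed $f(\alpha)\le F(\alpha^1)\le 2r\,|B(e,1)|\,H(\alpha)<\infty$, using $H(\beta\vee\gamma)\le H(\beta)+H(\gamma)$ and the fact that the $T_{s_i}$ preserve $\lambda^G$. I will exhibit finite-entropy partitions $\beta_i$ of $[0,1]^G$ with $\lim_{i\to\infty}\beta_i=\cB=\alpha^G$ and $F(\beta_i)\to+\infty$; Proposition \ref{prop:partitionapproximation} then gives $f(\alpha)=\lim_i F(\beta_i)=+\infty$, a contradiction.

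For the construction, for each $i\ge 1$ let $\mathcal D_i$ be the partition of $[0,1]$ into its $2^i$ dyadic subintervals of length $2^{-i}$, let $\delta_i$ be the partition of $[0,1]^G$ obtained by pulling $\mathcal D_i$ back through the coordinate map $x\mapsto x(e)$, and set $\beta_i:=\bigvee_{g\in B(e,i)}T_g^{-1}\delta_i$. Since $B(e,i)\subset B(e,i+1)$ and $\mathcal D_i$ is refined by $\mathcal D_{i+1}$, the $\beta_i$ are increasing, $\bigvee_i\sigma(\beta_i)=\cB$ (the coordinate functions together with all dyadic levels generate $\cB$), and $H(\beta_i)=i\,|B(e,i)|\log 2<\infty$. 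To check $\lim_i\beta_i=\cB$ in the sense of Definition \ref{defn:partitionlimits} I take $\gamma_i=\beta_i$ for the second requirement; for the first I use the standard fact that $H(\gamma\,|\,\sigma(\beta_i))\to 0$ whenever $\gamma$ is a finite-entropy partition contained in $\bigvee_i\sigma(\beta_i)=\cB$ and the $\sigma(\beta_i)$ are increasing.

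It remains to compute $F(\beta_i)$. Under $\lambda^G$ the coordinate functions $x\mapsto x(g)$ are i.i.d. uniform, and $\beta_i$ (resp.\ $T_{s_k}^{-1}\beta_i$) is the partition recording $\lfloor 2^i x(g)\rfloor$ for $g\in B(e,i)$ (resp.\ $g\in B(e,i)s_k$); hence $H(\beta_i)=i\log 2\cdot|B(e,i)|$ and $H(\beta_i\vee T_{s_k}^{-1}\beta_i)=i\log 2\cdot|B(e,i)\cup B(e,i)s_k|$. The combinatorial heart of the matter is that the right-Cayley graph of $(G,S)$ is a tree, so the subgraph induced on $B(e,i)$ is a connected tree with $|B(e,i)|$ vertices and therefore $|B(e,i)|-1$ edges; counting these edges by their label gives $\sum_{k=1}^r|B(e,i)\cap B(e,i)s_k|=|B(e,i)|-1$, and since $g\mapsto gs_k$ is injective, $|B(e,i)\cup B(e,i)s_k|=2|B(e,i)|-|B(e,i)\cap B(e,i)s_k|$. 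Substituting,
\[
F(\beta_i)=i\log 2\Bigl[(1-2r)|B(e,i)|+\sum_{k=1}^r\bigl(2|B(e,i)|-|B(e,i)\cap B(e,i)s_k|\bigr)\Bigr]=i\log 2,
\]
because the coefficient of $|B(e,i)|$ collapses, $(1-2r)+2r-1=0$, and $\sum_k|B(e,i)\cap B(e,i)s_k|=|B(e,i)|-1$. Thus $F(\beta_i)=i\log 2\to\infty$, completing the contradiction.

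The main obstacle is the bookkeeping rather than any deep idea: one must make sure the $\beta_i$ genuinely satisfy Definition \ref{defn:partitionlimits} (this is why I use nested dyadic partitions, so that the increasing-martingale convergence of conditional entropy applies cleanly), and one must verify the edge-counting identity for the induced subgraph of the Cayley tree on $B(e,i)$ uniformly for the free group case ($S=\{s_i^{\pm1}\}$, where each geometric edge is counted once in $\sum_{k=1}^r$) and the free semigroup case ($S=\{s_i\}$). The Bernoulli entropy computations and the final cancellation are then routine.
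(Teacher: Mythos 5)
Your proof is correct and follows essentially the same route as the paper: assume a finite-entropy generating partition exists, build finite-entropy partitions $\beta_i^{B(e,i)}$ converging to the full $\sigma$-algebra from a refining sequence of finite partitions of $[0,1]$ pulled back through the coordinate map, apply Proposition \ref{prop:partitionapproximation}, and derive a contradiction from $F\to+\infty$. The only difference is cosmetic: where the paper cites the Bernoulli computation $f(\beta_i)=F(\beta_i)=H(\beta_i)$ from [Bo08a] together with splitting monotonicity to get $F(\alpha_i)=H(\sigma_i)$, you verify the same identity $F(\delta_i^{B(e,i)})=H(\delta_i)=i\log 2$ by hand via independence and the edge count $\sum_{k=1}^r|B\cap Bs_k|=|B|-1$ in the Cayley tree, which is a correct and self-contained substitute.
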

\begin{remark}
This result was proven first in [Bo08b] (by a different method). It is an open question whether it holds for all countable groups $G$.
\end{remark}
\begin{proof}
Let $\sigma_1 \le \sigma_2 \le \ldots$ be an increasing sequence of finite partitions of $[0,1]$ such that $\bigvee_{i=1}^\infty \sigma_i$ is the $\sigma$-algebra of all measurable sets (up to sets of measure zero). Let $\pi:[0,1]^G \to [0,1]$ be the evaluation map $\pi(x):=x(e)$. Let $\beta_i$ be the pullback partition $\beta_i:=\pi^*(\sigma_i)$. Let $\alpha_i = \beta_i^i$. It is an easy exercise to show that $\alpha_i$ converges to the full $\sigma$-algebra of all measurable sets of $[0,1]^G$. So, assuming that the system $G \c^T ([0,1]^G,\lambda^G)$ has a finite generating partition, it follows from proposition \ref{prop:partitionapproximation} that $f(T) = \lim_{i\to\infty} F(\alpha_i)$. We will show that the later limit equals $+\infty$ which contradicts the fact that the $f$-invariant is the infimum of a set of real numbers.

Since $(T,[0,1]^G,\lambda^G,\beta_i)$ is a Bernoulli process, it follows from a simple calculation (performed in [Bo08a]) that $f(\beta_i)=F(\beta_i)=H(\beta_i)$. Since $\alpha_i$ is a splitting of $\beta_i$, this implies $F(\alpha_i)=H(\beta_i)$. By definition, $H(\beta_i)=H(\sigma_i)$. So we have $F(\alpha_i)=H(\sigma_i)$. Obviously, $\lim_{i\to\infty} H(\sigma_i) = +\infty$.
\end{proof}

We will need two simple lemmas.
\begin{lem}\label{lem:Fdistance}
If $\alpha, \beta$ are any partitions of $X$ with $H(\alpha)+H(\beta)<\infty$ then
$$|F(\alpha)-F(\beta)| \le (4r-1)d(\alpha,\beta).$$
\end{lem}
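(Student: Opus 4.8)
The plan is to treat $|F(\alpha)-F(\beta)|$ term-by-term via the expansion
$$F(\alpha)-F(\beta) = (1-2r)\big(H(\alpha)-H(\beta)\big) + \sum_{i=1}^r \Big(H(\alpha \vee T_{s_i}^{-1}\alpha) - H(\beta \vee T_{s_i}^{-1}\beta)\Big),$$
and to bound each piece by a multiple of $d(\alpha,\beta)$ using two soft properties of the Rohlin metric. First I would record that $|H(\gamma)-H(\delta)| \le d(\gamma,\delta)$ for all $\gamma,\delta \in \sP$: indeed $H(\gamma \vee \delta) = H(\gamma) + H(\delta|\gamma) = H(\delta) + H(\gamma|\delta)$ by Lemma \ref{lem:relative}, so $H(\gamma)-H(\delta) = H(\gamma|\delta) - H(\delta|\gamma)$, whose absolute value is at most $H(\gamma|\delta) + H(\delta|\gamma) = d(\gamma,\delta)$.

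Second I would establish that the join is $1$-Lipschitz in each variable, hence subadditive: $d(\gamma \vee \eta, \delta \vee \eta) \le d(\gamma,\delta)$ for any $\eta \in \sP$. This follows from the chain rule and monotonicity in Lemma \ref{lem:relative}, since $H(\gamma \vee \eta \mid \delta \vee \eta) = H(\gamma \mid \delta \vee \eta) \le H(\gamma \mid \delta)$ and symmetrically. Combined with the triangle inequality this gives $d(\gamma_1 \vee \gamma_2, \delta_1 \vee \delta_2) \le d(\gamma_1,\delta_1) + d(\gamma_2,\delta_2)$. Applying this with the fact that $G$ acts isometrically on $\sP$ (Definition \ref{defn:rohlin}, which holds in the semigroup case as well since each $T_g$ preserves $\mu$), I get
$$\big|H(\alpha \vee T_{s_i}^{-1}\alpha) - H(\beta \vee T_{s_i}^{-1}\beta)\big| \le d\big(\alpha \vee T_{s_i}^{-1}\alpha,\, \beta \vee T_{s_i}^{-1}\beta\big) \le d(\alpha,\beta) + d(T_{s_i}^{-1}\alpha, T_{s_i}^{-1}\beta) = 2\,d(\alpha,\beta).$$

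Putting the pieces together, and using $|1-2r| = 2r-1$ for $r \ge 1$, the first summand contributes at most $(2r-1)\,d(\alpha,\beta)$ and the $r$ remaining summands contribute at most $2r\,d(\alpha,\beta)$ in total, giving $|F(\alpha)-F(\beta)| \le (4r-1)\,d(\alpha,\beta)$. There is no substantive obstacle here: this is a routine Lipschitz estimate, and the only step needing a moment's care is the subadditivity of the join under the Rohlin metric, which is an immediate consequence of Lemma \ref{lem:relative}.
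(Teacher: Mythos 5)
Your proof is correct and follows essentially the same route as the paper: bound $|H(\alpha)-H(\beta)|$ by $d(\alpha,\beta)$, bound each $|H(\alpha\vee T_{s_i}^{-1}\alpha)-H(\beta\vee T_{s_i}^{-1}\beta)|$ by $d(\alpha\vee T_{s_i}^{-1}\alpha,\beta\vee T_{s_i}^{-1}\beta)\le 2d(\alpha,\beta)$, and sum to get $(2r-1)+2r=4r-1$. The only difference is that you spell out the subadditivity of the join under the Rohlin metric and the isometry of the $G$-action, which the paper leaves implicit.
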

\begin{proof}
This follows immediately from the fact that
$$|H(\alpha)-H(\beta)|\le |H(\alpha)-H(\alpha\vee\beta)|+|H(\alpha\vee\beta)-H(\beta)|= d(\alpha,\beta)$$
and for any $s\in S$,
$$|H(\alpha \vee T_s^{-1}\alpha) - H(\beta \vee T_s^{-1}\beta)| \le d(\alpha \vee T_s^{-1}\alpha, \beta \vee T_s^{-1}\beta) \le 2d(\alpha,\beta).$$
\end{proof}

\begin{lem}
Let $\alpha$, $\{\beta_i\}_{i=1}^\infty$ be as in proposition \ref{prop:partitionapproximation}. If $\{\gamma_i\}_{i=1}^\infty$ is a sequence of partitions with $\lim_{i\to\infty} d(\gamma_i,\beta_i) = 0$ then $\lim_{i\to\infty} \gamma_i = \alpha^G$.
\end{lem}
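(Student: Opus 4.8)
The plan is to verify directly that $\{\gamma_i\}_{i=1}^\infty$ satisfies both clauses of Definition~\ref{defn:partitionlimits} for the $\sigma$-algebra $\sF=\alpha^G$, transporting each clause from $\{\beta_i\}$ to $\{\gamma_i\}$. The only ingredients are the elementary entropy inequalities of Lemma~\ref{lem:relative} (monotonicity of $H(\cdot\,|\,\cdot)$ in its first argument and the chain rule $H(\delta\vee\beta\,|\,\gamma)=H(\beta\,|\,\gamma)+H(\delta\,|\,\beta\vee\gamma)$) together with the observation that $H(\beta\,|\,\gamma)\le d(\beta,\gamma)$, which is immediate from $d(\beta,\gamma)=H(\beta\,|\,\gamma)+H(\gamma\,|\,\beta)$ and nonnegativity of entropy. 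Since the Rohlin distance is only defined on $\sP$, each $\gamma_i$ automatically has finite entropy, so all of these inequalities apply.

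For the first clause, I would fix a partition $\delta\subset\alpha^G$ with $H(\delta)<\infty$ and estimate
$$H(\delta\,|\,\gamma_i)\ \le\ H(\delta\vee\beta_i\,|\,\gamma_i)\ =\ H(\beta_i\,|\,\gamma_i)+H(\delta\,|\,\beta_i\vee\gamma_i)\ \le\ d(\beta_i,\gamma_i)+H(\delta\,|\,\beta_i).$$
The first term tends to $0$ by hypothesis, and the second tends to $0$ because $\lim_{i\to\infty}\beta_i=\alpha^G$ (the first clause of Definition~\ref{defn:partitionlimits} applied to $\beta_i$); hence $H(\delta\,|\,\gamma_i)\to 0$.

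For the second clause, since $\lim_{i\to\infty}\beta_i=\alpha^G$ there is a sequence of partitions $\eta_i\subset\alpha^G$ with $d(\eta_i,\beta_i)\to 0$; the same sequence works for $\gamma_i$, since by the triangle inequality $d(\eta_i,\gamma_i)\le d(\eta_i,\beta_i)+d(\beta_i,\gamma_i)\to 0$. This verifies both clauses of Definition~\ref{defn:partitionlimits} and proves $\lim_{i\to\infty}\gamma_i=\alpha^G$.

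I do not anticipate any genuine obstacle: the argument is just a short application of the triangle inequality for $d$ and of the monotonicity and chain-rule properties of conditional entropy. The only point needing mild care is arranging the entropy inequalities so that the leftover error terms are exactly $d(\beta_i,\gamma_i)$ and $H(\delta\,|\,\beta_i)$, both of which are known to vanish in the limit; this is routine.
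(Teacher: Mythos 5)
Your proof is correct and follows essentially the same route as the paper: both verify the first clause of Definition \ref{defn:partitionlimits} by bounding $H(\delta\,|\,\gamma_i)$ by $H(\delta\,|\,\beta_i)$ plus a bounded multiple of $d(\beta_i,\gamma_i)$ via elementary conditional-entropy identities (your chain-rule decomposition versus the paper's telescoping of unconditional entropies is only a cosmetic difference, and yields the slightly sharper constant $1$ in place of $2$). Your explicit verification of the second clause by the triangle inequality is a point the paper leaves implicit, but it is the obvious argument.
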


\begin{proof}
Let $\omega \le \alpha^G$ be any partition with $H(\omega)<\infty$. Then
\begin{eqnarray*}
H(\omega|\gamma_i) &=& H(\omega \vee \gamma_i) - H(\gamma_i)\\
&\le&  |H(\omega\vee \gamma_i) - H(\omega \vee \beta_i)| + |H(\omega \vee \beta_i) - H(\beta_i)| + |H(\beta_i) - H(\gamma_i)|\\
&\le& d(\omega \vee \gamma_i,\omega \vee \beta_i)+ H(\omega|\beta_i) + d(\gamma_i,\beta_i)\\
&\le& H(\omega|\beta_i)  + 2d(\gamma_i,\beta_i).
\end{eqnarray*}
The result now follows from the hypothesis that $\lim_{i\to\infty} \beta_i = \alpha^G$.
\end{proof}

\begin{proof}[Proof of proposition \ref{prop:partitionapproximation}]
Since $\lim_{i\to\infty} \beta_i = \alpha^G$, there exists partitions $\gamma_i \subset \alpha^G$ such that $d(\beta_i,\gamma_i) \to 0$. Since $\gamma_i \subset \alpha^G$, we can assume, without loss of generality, that $\gamma_i \le \alpha^{n(i)}$ for some $n(i) \in \N$. For every $m>0$, $H(\alpha^m|\beta_i) \to 0$ implies that $d(\alpha^m \vee \gamma_i, \beta_i) \to 0$ too. So there is a sequence $m(i)$ such that $\lim_{i\to\infty} m(i)=+\infty$ and $d(\alpha^{m(i)} \vee \gamma_i,\beta_i) \to 0$. After replacing $\gamma_i$ with $\alpha^{m(i)} \vee \gamma_i$ we may assume that $\alpha^{m(i)} \le \gamma_i \le \alpha^{n(i)}$. 




 Propositions \ref{prop:splitting} and \ref{prop:splittingmonotone} imply $F(\mu, \alpha^{n(i)}) \le F(\mu, \gamma_i)$. Thus
$$f(\mu,\alpha) = \inf_{n\to\infty} F(\mu,\alpha^n) \le \liminf_{i\to\infty} F(\mu,\gamma_i)\le \limsup_{i\to\infty} F(\mu,\gamma_i).$$
We claim that equality holds in the above equation. Since $\gamma_i \le \alpha^G$ for all $i$, to prove the claim we may assume that $\alpha$ is generating. By lemma \ref{lem:embedding}, we may assume that $X=K^G$ and $T$ and $\alpha$ are the canonical action and partition respectively.

Let $\mu_i$ be the Markov approximation to $\mu$ induced by $\gamma_i$ (definition \ref{defn:approx}). Since $\alpha^{m(i)} \le \gamma_i$ and $m(i)$ tends to infinity with $i$, lemma \ref{lem:approx} implies that $\mu_i$ tends to $\mu$ in the weak* topology. Since $f$ is upper semi-continuous in the $\mu$ variable (lemma \ref{lem:mucontinuous}) and $F(\mu,\gamma_i) = F(\mu_i,\gamma_i)=f(\mu_i,\alpha)$ (by theorem \ref{thm:f=F}), this implies
$$ \limsup_{i\to\infty} F(\mu, \gamma_i) = \limsup_{i\to\infty} f(\mu_i,\alpha) \le f(\mu,\alpha).$$
This proves the claim. Since $\lim_{i\to\infty} d(\gamma_i,\beta_i)=0$, lemma \ref{lem:Fdistance} implies $f(\mu,\alpha) = \lim_{n\to \infty} F(\mu,\beta_i).$ This proves the proposition in the case of $F$. The proof with $F_*$ replacing $F$ is similar.
 \end{proof}

\section{Yuzvinskii's Addition Formula}\label{sec:yuz}
In this section, we prove theorem \ref{thm:freeyuz}. The proof makes use of a generalization of a result due to R. K. Thomas [Th71] which itself is a generalization of Yuzvinskii's formula. To state it properly, we need some definitions.
\begin{defn}\label{defn:skewproduct}
Let $G=\langle s_1,\ldots,s_r\rangle$ and $G \c^T (X,\cB,\mu)$. Let $\Gamma$ be a separable compact group with Haar probability measure $\nu$. Let $\{U_g\}_{g\in G}$ be an action of $G$ on $\Gamma$ by homomorphisms that preserve Haar measure.

A {\bf cocycle} for the actions $T$ and $U$ is a measurable map $\phi: G \times X \to \Gamma$ satisfying
$$\phi(g_2g_1, x) = U_{g_2}\big(\phi(g_1,x)\big)\phi(g_2,T_{g_1}x) ~\forall g_1,g_2 \in G, x \in X.$$
The {\bf skew product action} $\{S_g\}_{g\in G}$ of $G$ on  $(X \times \Gamma, \mu \times \nu)$ is defined by
$$S_g(x,\gamma)=\big(T_gx, U_g(\gamma) \phi(g,x)\big) ~\forall g\in G, x \in X, \gamma \in \Gamma.$$
We also write $S=T\times_\phi U$.
\end{defn}

\begin{thm}\label{thm:thomas}
Let $T,U,S,\phi$, etc. be as in the previous definition. Suppose $\Gamma$ is either totally disconnected, a Lie group or a connected finite-dimensional abelian group. If there are finite-entropy generating partitions $\alpha, \beta$ for $G \c^T (X,\cB,\mu)$ and $G \c^U (\Gamma,\textrm{Haar}(\Gamma))$ respectively then
$$f(S) = f(T\times_\phi U)= f(T) + f(U).$$
\end{thm}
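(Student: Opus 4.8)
The plan is to reduce the formula, by means of the approximation result Proposition~\ref{prop:partitionapproximation} together with the equality $f=f_*$, to the classical skew-product entropy theory of R.~K. Thomas applied separately to each of the single transformations $S_{s_1},\dots,S_{s_r}$. First I would fix the relevant partitions. Let $\pi\colon X\times\Gamma\to X$ be the projection onto the first coordinate and $\rho\colon X\times\Gamma\to\Gamma$ the projection onto the second. Since $\pi$ intertwines $S_g$ with $T_g$ and carries $\mu\times\nu$ to $\mu$, the sub-$\sigma$-algebra $\mathcal A:=\pi^{-1}(\mathcal B_X)$ is $S(G)$-invariant. For $i\ge 1$ set
$$\beta_i:=\pi^{-1}(\alpha^i)\vee\rho^{-1}(\beta^i),$$
where $\alpha^i=\alpha^{B(e,i)}$ is formed with the $T$-action and $\beta^i=\beta^{B(e,i)}$ with the $U$-action. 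Because $\alpha$ and $\beta$ are finite-entropy generating partitions for $T$ and $U$, the factor $\pi^{-1}(\alpha^i)$ increases to $\mathcal A$ and $\rho^{-1}(\beta^i)$ increases to $\rho^{-1}(\mathcal B_\Gamma)$, so $\beta_i$ increases to the full Borel $\sigma$-algebra of $X\times\Gamma$; in particular $H(\beta_i)<\infty$ and, trivially, $\beta_i\to\alpha_S^{\,G}$ in the sense of Definition~\ref{defn:partitionlimits} for any generating partition $\alpha_S$ of $S$ (one exists by hypothesis). Applying Proposition~\ref{prop:partitionapproximation} to the three systems $S$, $T$, $U$ then gives
$$f(S)=\lim_{i\to\infty}F_*(S,\beta_i),\qquad f(T)=\lim_{i\to\infty}F_*(T,\alpha^i),\qquad f(U)=\lim_{i\to\infty}F_*(U,\beta^i).$$
So it suffices to prove $F_*(S,\beta_i)=F_*(T,\alpha^i)+F_*(U,\beta^i)+\varepsilon_i$ with $\varepsilon_i\to0$.

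To do this I would split the two kinds of term in $F_*(S,\beta_i)=(1-r)H(\beta_i)+\sum_{j=1}^r h(S_{s_j},\beta_i)$. The static term is exact: the pullback $\sigma$-algebras $\pi^{-1}(\mathcal B_X)$ and $\rho^{-1}(\mathcal B_\Gamma)$ are independent for $\mu\times\nu$, hence $H(\beta_i)=H(\alpha^i)+H_\nu(\beta^i)$. For the dynamical terms, observe that each $S_{s_j}$ is exactly a classical algebraic skew product $T_{s_j}\ltimes_{\phi(s_j,\cdot)}U_{s_j}$ of the single transformation $T_{s_j}$ on $(X,\mu)$ by the Haar-measure-preserving automorphism $U_{s_j}$ of $\Gamma$. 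Since $\pi^{-1}(\alpha^i)\le\beta_i$, the classical relative Abramov--Rohlin formula (a conditional form of Theorem~\ref{thm:AR}) gives
$$h(S_{s_j},\beta_i)=h\big(S_{s_j},\pi^{-1}(\alpha^i)\big)+h\big(S_{s_j},\rho^{-1}(\beta^i)\ \big|\ (\pi^{-1}\alpha^i)^{S_{s_j}}\big),$$
where the first summand equals $h(T_{s_j},\alpha^i)$ because $\pi$ is a measure-preserving factor. The second summand is the relative entropy of a fiber partition of an algebraic skew product over its base; by Thomas's theorem and the relative form of Yuzvinskii's addition formula it equals $h(U_{s_j},\beta^i)+\delta_{i,j}$ with $\delta_{i,j}\ge0$, the error $\delta_{i,j}$ coming only from the gap between $(\pi^{-1}\alpha^i)^{S_{s_j}}$ and $\mathcal A$ and tending to $0$ as $i\to\infty$ because $\alpha$ generates under $T$. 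Summing over $j$ and combining with the exact static identity yields $F_*(S,\beta_i)=F_*(T,\alpha^i)+F_*(U,\beta^i)+\sum_j\delta_{i,j}$, and letting $i\to\infty$ gives $f(S)=f(T)+f(U)$.

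The main obstacle is precisely this use of classical theory: the assertion that, for a classical algebraic skew product over a base, the relative entropy of a fiber partition coincides (in the limit over refining fiber partitions) with its entropy under the fiber automorphism alone. This is a partition-level, relative refinement of Thomas's formula [Th71] (itself a refinement of Yuzvinskii's [Yu65]), and its proof genuinely uses the structure hypothesis on $\Gamma$: one argues by induction on the complexity of $\Gamma$ --- through chains of open normal subgroups when $\Gamma$ is totally disconnected, through the identity component and Lie-algebra considerations when $\Gamma$ is a Lie group, and through Pontryagin duality when $\Gamma$ is connected finite-dimensional abelian --- reducing to base cases ($\Gamma$ finite, or a torus with a toral automorphism) that can be computed directly. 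A secondary difficulty requiring care is that the cocycle twists the $S_{s_j}$-translates of $\rho^{-1}(\beta^i)$: on the fiber over $x$ one gets right-translates $U_{s_j}^{-k}(\beta^i\cdot\psi_k(x))$ of $U_{s_j}^{-k}\beta^i$ rather than $U_{s_j}^{-k}\beta^i$ itself, so the identification with $h(U_{s_j},\beta^i)$ is not a mere change of variables; however each such translation preserves Haar measure on the fiber, which is exactly the feature the classical argument exploits, and it affects only finite-time entropies, not their exponential growth rate.
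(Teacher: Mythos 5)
Your proposal is correct and follows essentially the same route as the paper: both reduce, via $f=f_*$ and Proposition \ref{prop:partitionapproximation} applied to product partitions of the form $\alpha^i\times\beta^i$, to the classical single-transformation skew-product entropy addition formula of Thomas applied to each generator $S_{s_j}=T_{s_j}\times_{\phi(s_j,\cdot)}U_{s_j}$, with the static term handled by independence of the two coordinate $\sigma$-algebras. The only organizational difference is that the paper invokes the partition-level form of [Th71, Theorem 2.3] directly (under the ``rigidity'' hypothesis, verified separately for the three classes of $\Gamma$) to get the exact identity $F_*(S,\alpha^i\times\beta^i)=F_*(T,\alpha^i)+F_*(U,\beta^i)$, whereas you re-derive that identity through the relative Abramov--Rohlin formula with error terms $\delta_{i,j}$ that must then be sent to zero.
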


In [Th71], R. K. Thomas proved the above theorem in the case $G=\Z$ or $\N$ without the finite-entropy restriction and without the restrictions on $\Gamma$. His proof relies on ideas from [Yu65]. Next let us see how theorem \ref{thm:freeyuz} follows from theorem \ref{thm:thomas}.

\begin{proof}[Proof of theorem \ref{thm:freeyuz} assuming theorem \ref{thm:thomas}]
Let $\sigma:\sG/\cN \to \sG$ be a Borel cross-section (i.e., $\sigma(\gamma \cN) \in \gamma\cN$ for $\gamma \in\sG$). Define a cocycle $\phi:G \times (\sG/\cN) \to \cN$ by $\phi(g,\gamma \cN) = T_g(\sigma(\gamma \cN)) \sigma(T_g(\gamma)\cN)^{-1}$. Define $\psi: \sG/\cN \times \cN \to \sG$ by $\psi(\gamma \cN, k) = k\sigma(\gamma \cN)$. An elementary calculation shows that $\psi$ conjugates the skew-action $T_{\sG/\cN} \times_\phi T_\cN$ with the $T_\sG$. Now apply theorem \ref{thm:thomas}.
\end{proof}

\begin{defn}
A group $\Gamma$ is {\bf rigid} if there exists an increasing sequences $\xi_1 \le \xi_2 \le \ldots$ of finite partitions of $\Gamma$ and a real number $Q>0$ such that $H(\alpha|\xi_i) \to 0$ for all finite-entropy partitions $\alpha$ and $H(\xi_i \gamma | \xi_i) \le Q$ for all $i$ and all $\gamma\in \Gamma$.
\end{defn}

\begin{thm}[Th71, theorem 2.3]
Suppose $G$ is isomorphic to either $\Z$ or $\N$. Let $T, U, X, \Gamma, \phi$ be as in definition \ref{defn:skewproduct}. Suppose $\Gamma$ is rigid. Let $\alpha$ and $\beta$ be partitions of $X$ and $\Gamma$ respectively. Let $\alpha \times \beta$ denote the product partition on $X \times \Gamma$. Then
$$h(T \times_\phi U, \alpha \times \beta) = h(T,\alpha) + h(U,\beta).$$
\end{thm}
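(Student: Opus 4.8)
The proof follows Thomas's strategy: reduce to a fibre-entropy computation by the Abramov--Rohlin formula, then use rigidity to show that the fibre entropy is not changed by the cocycle. Since $G\cong\Z$ or $\N$ it is generated by one element; write $T$ for the base transformation of $(X,\cB,\mu)$, $U$ for the Haar-preserving endomorphism of $\Gamma$, $\phi$ for the cocycle, and $S=T\times_\phi U$ for the skew product on $(X\times\Gamma,\mu\times\nu)$. Let $\pi:X\times\Gamma\to X$ be the coordinate projection and $\sF:=\pi^{-1}(\cB)$, which is $S$-invariant. Writing the product partition as $\alpha\times\beta=\pi^{-1}\alpha\vee(\{X\}\times\beta)$, the first step is to apply the classical relative Abramov--Rohlin formula, Theorem \ref{thm:AR} (with the single transformation $S$, first partition $\pi^{-1}\alpha$, second partition $\{X\}\times\beta$):
$$h(S,\alpha\times\beta)=h(S,\pi^{-1}\alpha)+h\big(S,\{X\}\times\beta\mid(\pi^{-1}\alpha)^S\big).$$
Because $\pi\circ S=T\circ\pi$ one has $h(S,\pi^{-1}\alpha)=h(T,\alpha)$ and $(\pi^{-1}\alpha)^S=\pi^{-1}(\alpha^T)$; taking $\alpha$ generating (so $\alpha^T=\cB$, which is the case in which the statement is used), this conditioning $\sigma$-algebra is $\sF$. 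Thus the theorem reduces to the identity $h(S,\{X\}\times\beta\mid\sF)=h(U,\beta)$.

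To compute the left-hand side I would disintegrate $\mu\times\nu$ over $X$: the conditional measure on the fibre $\{x\}\times\Gamma$ is $\nu$, and a short computation with the cocycle identity shows that the trace of $\bigvee_{i=0}^{n-1}S^{-i}(\{X\}\times\beta)$ on that fibre is $\bigvee_{i=0}^{n-1}\beta_i(x)$, where $\beta_i(x)$ is the pullback of $\beta$ under $\gamma\mapsto U^i(\gamma)\phi(i,x)$ --- equivalently, a right-translate of $U^{-i}\beta$ by a group element determined by $\phi(i,x)$. Hence
$$h\big(S,\{X\}\times\beta\mid\sF\big)=\lim_{n\to\infty}\frac1n\int_X H_\nu\Big(\bigvee_{i=0}^{n-1}\beta_i(x)\Big)\,d\mu(x).$$
Splitting $[0,n+k)$ into $[0,n)$ and $[n,n+k)$ and pushing the second block back by $U^{-n}$ followed by a translation, the cocycle identity shows that the sequence of integrands is subadditive, so this limit exists and equals the infimum over $n$. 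For the trivial cocycle $\beta_i(x)=U^{-i}\beta$ and the right-hand side is exactly $h(U,\beta)$; so what remains is to prove that this fibre entropy is independent of the cocycle $\phi$.

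The third step --- the heart of the matter --- establishes this cocycle-independence using rigidity. Let $\xi_1\le\xi_2\le\cdots$ be the rigid sequence of finite partitions of $\Gamma$ with $H(\beta\mid\xi_j)\to0$ and $\sup_{\gamma\in\Gamma}H(\xi_j\gamma\mid\xi_j)\le Q$. Since conditional entropy is invariant under translations of $\Gamma$ and under $U$, the partition $\beta$ can be compared with the $\xi_j$'s uniformly along the orbit: the twisted joins $\bigvee_{i<n}\beta_i(x)$ are coarsened by the corresponding $\xi_j$-joins at a cost of at most $n\,H(\beta\mid\xi_j)$ per level, while for the rigid partitions the translates $[U^{-i}\xi_j]\gamma$ stay within a fixed Rohlin distance of $U^{-i}\xi_j$, uniformly in $i$ and $\gamma$. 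The hypothesis that $\Gamma$ is totally disconnected, a Lie group, or a connected finite-dimensional abelian group enters precisely here, since these are exactly the cases in which such a rigid sequence exists (coset partitions of open normal subgroups in the profinite case, where moreover translates of $\xi_j$ equal $\xi_j$; small-cell partitions in the Lie case; dyadic-type partitions in the connected abelian case). Feeding in the cocycle recursion $\phi(i{+}1,x)=U(\phi(i,x))\phi(1,T^ix)$, which ties the successive twists together, one shows that the twisted and untwisted fibre joins have the same exponential growth rate, so that $\frac1n\int_X H_\nu(\bigvee_{i<n}\beta_i(x))\,d\mu\to h(U,\beta)$, giving the required identity.

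I expect the last step to be the main obstacle. A naive estimate charges a constant of order $Q$ to each of the $n$ cocycle twists, an $O(Qn)$ bound that is useless after dividing by $n$; extracting from rigidity a genuinely $o(n)$ comparison --- by using the cocycle recursion to organise the twists coherently along $T$-orbits rather than bounding them coordinate-by-coordinate --- is the technical core of Thomas's argument, and is the reason one imposes the ``rigid'' condition (not merely generation) on $\Gamma$. By contrast the other ingredients --- the Abramov--Rohlin reduction, the fibre disintegration, and the subadditivity yielding existence of the limit --- are routine.
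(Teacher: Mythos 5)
You should first note what the paper actually does here: it does not reprove this statement. The result is quoted as [Th71, Theorem 2.3], and the paper's entire ``proof'' is the remark that Thomas proves it for \emph{generating} $\alpha,\beta$ and that his argument extends to arbitrary partitions ``with only minor obvious modifications.'' Your outline of Thomas's argument has the right architecture --- the Abramov--Rohlin reduction $h(S,\alpha\times\beta)=h(S,\pi^{-1}\alpha)+h(S,\{X\}\times\beta\mid(\pi^{-1}\alpha)^S)$, the disintegration over $X$ identifying the fibre trace of $\bigvee_{i<n}S^{-i}(\{X\}\times\beta)$ as a cocycle-twisted join of translates of $U^{-i}\beta$, and the use of rigidity to show the twist is harmless. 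But the proposal does not prove the theorem, and you say so yourself: the entire content of the statement is the cocycle-independence of the fibre entropy, and at that point you write that one ``shows that the twisted and untwisted fibre joins have the same exponential growth rate'' without showing it, after correctly observing that the naive coordinate-by-coordinate use of $H(\xi_j\gamma\mid\xi_j)\le Q$ only yields an $O(Qn)$ error, which is useless after dividing by $n$. That missing estimate is not a detail one can wave at; it is the theorem. (It is also where the hypothesis on $\Gamma$ genuinely enters --- for a general compact group no rigid sequence is known, which is exactly why Theorem \ref{thm:thomas} carries the totally disconnected/Lie/finite-dimensional abelian restriction and why the paper's addition formula is only conjectural in general.) So the proposal is an accurate table of contents for Thomas's proof with its central chapter absent.

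A second, smaller problem: your reduction takes $\alpha$ generating, ``which is the case in which the statement is used.'' That is backwards. The statement is deliberately phrased for arbitrary $\alpha,\beta$, and the arbitrary case is precisely what Proposition \ref{prop:rigid} needs: there the theorem is applied to each single generator $T_{s_i}$ with finite partitions $\alpha_n$ of $X$ that have no reason to generate under $T_{s_i}$ alone. With $\alpha$ non-generating the conditioning algebra $(\pi^{-1}\alpha)^S=\pi^{-1}(\alpha^T)$ is a proper subalgebra of $\sF=\pi^{-1}(\cB)$, so your fibre computation (which conditions on all of $\sF$) only gives one of the two inequalities; one must additionally handle the unconditioned (trivial-$\alpha$) case, or some other argument, to close the sandwich $h(S,\{X\}\times\beta)\ge h(S,\{X\}\times\beta\mid\pi^{-1}(\alpha^T))\ge h(S,\{X\}\times\beta\mid\sF)$. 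This is presumably the ``minor obvious modification'' the paper alludes to, but as written your argument covers only the case Thomas already states, not the version being asserted here.
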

\begin{proof}
In theorem 2.3 of [Th71] this result is proven under the assumption that $\alpha$ and $\beta$ are generating partitions. However,  the proof yields this more general result with only minor obvious modifications.
\end{proof}


\begin{prop}\label{prop:rigid}
Theorem \ref{thm:thomas} is true whenever $\Gamma$ is rigid.
\end{prop}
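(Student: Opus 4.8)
The plan is to reduce the addition formula, one free generator at a time, to theorem~2.3 of [Th71] (quoted above), and then to recombine the resulting one-dimensional identities by means of Proposition~\ref{prop:partitionapproximation}. Write $\pi_X\colon X\times\Gamma\to X$ and $\pi_\Gamma\colon X\times\Gamma\to\Gamma$ for the coordinate projections; $\pi_X$ intertwines $S$ with $T$, whereas $\pi_\Gamma$ is not $G$-equivariant. Fix finite-entropy generating partitions $\alpha$ of $(X,T)$ and $\beta$ of $(\Gamma,U)$, and let $\xi_1\le\xi_2\le\cdots$ be the finite partitions of $\Gamma$ furnished by rigidity, so that $H(\omega|\xi_i)\to 0$ for every finite-entropy partition $\omega$ of $\Gamma$ and $H(\xi_i\gamma|\xi_i)\le Q$ for all $i$ and all $\gamma\in\Gamma$. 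The partition $\pi_X^{-1}\alpha\vee\pi_\Gamma^{-1}\beta$ is a finite-entropy generating partition for $G\c^S(X\times\Gamma,\mu\times\nu)$ (the standard fact that a generator of the base joined with a generator of the fibre generates a skew product), so $f(S)$ is well defined.

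First I would introduce the partitions $\eta_i:=\pi_X^{-1}(\alpha^i)\vee\pi_\Gamma^{-1}(\xi_i)$ of $X\times\Gamma$; each has finite entropy. Because $\alpha$ generates, $\bigvee_i\pi_X^{-1}(\alpha^i)=\pi_X^{-1}\cB_X$, and because $\{\xi_i\}$ is an increasing generating sequence for $\Gamma$, $\bigvee_i\pi_\Gamma^{-1}(\xi_i)=\pi_\Gamma^{-1}\cB_\Gamma$; since these two $\sigma$-algebras jointly generate $\cB_{X\times\Gamma}$, a routine martingale argument shows $\lim_{i\to\infty}\eta_i=\cB_{X\times\Gamma}=(\pi_X^{-1}\alpha\vee\pi_\Gamma^{-1}\beta)^G$ in the sense of Definition~\ref{defn:partitionlimits} (take $\gamma_i=\eta_i$ for the second requirement). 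Proposition~\ref{prop:partitionapproximation}, applied to the process $(S,X\times\Gamma,\mu\times\nu,\pi_X^{-1}\alpha\vee\pi_\Gamma^{-1}\beta)$ and the sequence $\{\eta_i\}$, then yields $f(S)=f_*(S)=\lim_{i\to\infty}F_*(S,\eta_i)$.

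Next I would compute $F_*(S,\eta_i)=(1-r)H(\eta_i)+\sum_{j=1}^r h(S_{s_j},\eta_i)$. Since $\eta_i$ is precisely the product partition $\alpha^i\times\xi_i$ and the two coordinate factors are independent under $\mu\times\nu$, one has $H(\eta_i)=H(\alpha^i)+H(\xi_i)$. For each generator $s_j$, the transformation $S_{s_j}$ is itself a skew product over the cyclic (semi)group $\langle s_j\rangle\cong\Z$ (or $\N$): restricting the cocycle identity of Definition~\ref{defn:skewproduct} to $\langle s_j\rangle$ shows that $n\mapsto\phi(s_j^n,\cdot)$ is a cocycle for the pair $(T_{s_j},U_{s_j})$, that $U_{s_j}$ is a Haar-preserving endomorphism of $\Gamma$ (an automorphism in the group case), and that $S_{s_j}=T_{s_j}\times_{\phi}U_{s_j}$ as transformations over $\langle s_j\rangle$. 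Rigidity of $\Gamma$ then lets me invoke theorem~2.3 of [Th71] (in its form valid for arbitrary, not necessarily generating, partitions), which gives $h(S_{s_j},\alpha^i\times\xi_i)=h(T_{s_j},\alpha^i)+h(U_{s_j},\xi_i)$. Substituting and regrouping the terms over $j$ shows $F_*(S,\eta_i)=F_*(T,\alpha^i)+F_*(U,\xi_i)$.

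Finally I would pass to the limit in $i$. By Definition~\ref{defn:f} together with Theorem~\ref{thm:f_*=f}, $\lim_i F_*(T,\alpha^i)=f_*(T,\alpha)=f(T)$; and since $\lim_i\xi_i=\cB_\Gamma=\beta^G$ (exactly the rigidity hypothesis rephrased in the language of Definition~\ref{defn:partitionlimits}), Proposition~\ref{prop:partitionapproximation} applied to $(U,\Gamma,\nu,\beta)$ and $\{\xi_i\}$ gives $\lim_i F_*(U,\xi_i)=f(U)$. Combining the two limits with the previous identity yields $f(S)=\lim_i F_*(S,\eta_i)=f(T)+f(U)$. The routine parts of this plan are the martingale convergence identifying $\lim_i\eta_i$, the independence identity $H(\eta_i)=H(\alpha^i)+H(\xi_i)$, and the recombination of terms; the delicate point is the passage through theorem~2.3 of [Th71], where one must verify that each $S_{s_j}$ genuinely has the skew-product form over $\Z$ or $\N$ required there — this is the step in which rigidity of $\Gamma$, rather than the three structural hypotheses appearing in Theorem~\ref{thm:thomas}, is what the argument actually consumes — and one must rely on the version of that theorem for non-generating partitions, which the excerpt grants because its proof invokes the generating hypothesis only at the final passage from a specific partition's entropy to the system's entropy. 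The substantive analytic content, namely the Yuzvinskii-type estimate built on the distortion bound $H(\xi_i\gamma|\xi_i)\le Q$, is imported wholesale from [Th71].
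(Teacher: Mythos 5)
Your argument is correct and follows essentially the same route as the paper: approximate by finite-entropy partitions converging to the full $\sigma$-algebras, invoke Proposition \ref{prop:partitionapproximation} to identify $f(S)$ with $\lim_i F_*(S,\eta_i)$, split each $h(S_{s_j},\cdot)$ term via the non-generating form of [Th71, theorem 2.3] applied to the $\Z$- (or $\N$-) skew product $S_{s_j}=T_{s_j}\times_\phi U_{s_j}$, and pass to the limit on each factor. The only cosmetic difference is that you instantiate the approximating sequences concretely as $\alpha^i$ and the rigidity partitions $\xi_i$, whereas the paper takes arbitrary sequences converging to $\alpha^G$ and $\beta^G$.
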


\begin{proof}
Let $\{\alpha_n\}$ be a sequence of finite partitions of $X$ such that $\alpha_n \to \alpha^G$. Similarly, let $\{\beta_n\}$ be a sequence of finite partitions of $\Gamma$ such that $\beta_n \to \beta^G$. By proposition \ref{prop:partitionapproximation}
\begin{eqnarray*}
f(T \times_\phi U) &=& \lim_{n\to\infty} F_*(T \times_\phi U, \alpha_n \times \beta_n).
\end{eqnarray*}
The previous proposition and the definition of $F_*$ implies
$$ F_*(T \times_\phi U, \alpha_n \times \beta_n) = F_*(T,\alpha_n) + F_*(U,\beta_n)$$
for any $n$. Now take the limit as $n\to\infty$ and apply proposition \ref{prop:partitionapproximation} again to obtain
$$f(T \times_\phi U) = f_*(T) + f_*(U) = f(T)+f(U).$$
\end{proof}

\begin{prop}
Totally disconnected groups, compact Lie groups, and finite-dimensional compact connected abelian groups are rigid.
\end{prop}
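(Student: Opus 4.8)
The plan is to handle all three classes by one device: for each such $\Gamma$ I will exhibit an increasing sequence $\xi_1\le\xi_2\le\cdots$ of finite partitions of $\Gamma$ whose join is the full Borel $\sigma$-algebra, together with a constant $Q>0$ such that every atom of $\xi_i$ meets at most $e^Q$ atoms of the right-translate $\xi_i\gamma$, for all $i$ and all $\gamma\in\Gamma$. The first property forces $H(\alpha|\xi_i)\to 0$ for every finite-entropy partition $\alpha$ (monotone convergence for increasing $\sigma$-algebras; lemma \ref{lem:relative} already gives monotonicity). The second gives $H(\xi_i\gamma|\xi_i)\le Q$: a compact group is unimodular, so Haar measure is right-invariant and $\xi_i\gamma$ is again a partition into atoms of the same measures; writing $\xi_i=\{A\}$ and $\xi_i\gamma=\{B\}$ one has $H(\xi_i\gamma|\xi_i)=\sum_A\mu(A)\,H\big((\mu(A\cap B)/\mu(A))_B\big)\le\log\max_A\#\{B:\mu(A\cap B)>0\}\le Q$. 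So the whole problem reduces to a uniform combinatorial overlap bound.

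For $\Gamma$ compact and totally disconnected this is immediate: van Dantzig's theorem gives a neighbourhood basis at the identity of open subgroups, and replacing such a subgroup by the intersection of its finitely many conjugates produces a basis of open \emph{normal} subgroups $\Gamma\ge N_1\ge N_2\ge\cdots$ with $\bigcap_i N_i=\{e\}$. Let $\xi_i$ be the partition of $\Gamma$ into $N_i$-cosets; it is increasing and generating, and right translation by any $\gamma$ just permutes the $N_i$-cosets, so $\xi_i\gamma=\xi_i$ and $H(\xi_i\gamma|\xi_i)=0$. For a compact Lie group $\Gamma$ of dimension $d$ I would fix a bi-invariant Riemannian metric (average an inner product on the Lie algebra over the adjoint action) so that right translations are isometries. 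Since the injectivity radius is positive and the curvature bounded, volume comparison gives a constant $C=C(\Gamma)$ so that any ball of radius $3r$ contains at most $C$ pairwise disjoint balls of radius $r$, uniformly for small $r$. Using dyadic subdivisions in exponential charts over a finite atlas, build increasing finite partitions $\xi_i$ with atoms of diameter $\le\varepsilon_i\downarrow 0$ each containing a ball of radius $\ge c\varepsilon_i$ for a fixed $c>0$. If $A\in\xi_i$ is met by translates $B\gamma$, each $B\gamma$ (an isometric copy of $B$) lies in the $\varepsilon_i$-neighbourhood of $A$, hence in a ball of radius $3\varepsilon_i$; the $c\varepsilon_i$-balls inside them are disjoint, so there are at most $C'=C'(\Gamma,c)$ of them, and $H(\xi_i\gamma|\xi_i)\le\log C'=:Q$.

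For a finite-dimensional compact connected abelian $\Gamma$ of dimension $d$, Pontryagin duality shows $\widehat\Gamma$ is torsion-free of rank $d$; choosing $\Z^d\subset\widehat\Gamma$ with torsion quotient and dualizing gives an extension $\{e\}\to K\to\Gamma\to\R^d/\Z^d\to\{e\}$ with $K$ compact totally disconnected. Pick open subgroups $K\ge K_1\ge K_2\ge\cdots$ with $\bigcap_j K_j=\{e\}$; then each $\Gamma/K_j$ is a compact connected abelian Lie group, hence a $d$-torus. On $\Gamma/K_j\cong\R^d/\Z^d$ take the partitions into dyadic cubes of side $2^{-n}$; since $\Gamma$ is abelian, translation by $\gamma$ descends to a translation on $\Gamma/K_j$, and a translated side-$2^{-n}$ cube meets at most $2^d$ cubes of the grid, so the pullbacks to $\Gamma$ have overlap $\le 2^d$. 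Combining the profinite-coset construction in the $K$-direction (overlap $0$) with the cube construction in the torus direction, and arranging the data at successive levels $\Gamma/K_j$ compatibly so the resulting partitions of $\Gamma$ are nested, one obtains an increasing generating sequence with $H(\xi_i\gamma|\xi_i)\le d\log 2=:Q$.

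The main obstacle is twofold. In the Lie case it is the \emph{uniform} packing estimate — that the overlap bound $C'$ may be taken independent of both $i$ and $\gamma$ — for which bi-invariance of the metric (so right translations are isometries) and the lower bound on the injectivity radius are exactly what is needed. In the abelian case the only real subtlety is making the partitions literally nested across the profinite levels: a dyadic-cube partition pulled back along an isogeny of tori is a partition into slanted parallelepipeds that no finer cube partition refines exactly, so one must choose coordinates on the tori $\Gamma/K_j$ compatibly with the bonding isogenies (Smith normal form), which is handled as in Yuzvinskii [Yu65]. Beyond these points the argument is the routine verification that each of the three sequences generates and that the stated overlap bounds hold; rigidity then follows with the $Q$ produced in each case.
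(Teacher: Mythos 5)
The paper does not actually prove this proposition; it cites [Yu65, Theorems 7.2--7.3] for the totally disconnected and abelian cases, [Th71, Theorem 2.5] for Lie groups, and [LSW90, Lemma B.5] for a correction to the abelian case. So your attempt to give a self-contained construction is a genuinely different (and more informative) route. Your general reduction is sound: a uniform bound $e^Q$ on the number of atoms of $\xi_i\gamma$ meeting a fixed atom of $\xi_i$ does give $H(\xi_i\gamma|\xi_i)\le Q$, and an increasing generating sequence does give $H(\alpha|\xi_i)\to 0$ for finite-entropy $\alpha$ by the martingale theorem. The totally disconnected case (cosets of a decreasing chain of open \emph{normal} subgroups, obtained by passing to cores, so that $\xi_i\gamma=\xi_i$) is complete and correct. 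The Lie case is correct in outline; the only work left is the standard Christ-type construction of nested partitions whose atoms have diameter $\lesssim\varepsilon_i$ and contain balls of radius $\gtrsim c\varepsilon_i$ with $c$ independent of $i$, after which bi-invariance of the metric and volume comparison give the uniform overlap bound exactly as you say.

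The genuine gap is in the abelian case, and it sits precisely at the step you wave at. Smith normal form diagonalizes a \emph{single} inclusion $K_j^\perp\subset K_{j+1}^\perp$ of rank-$d$ lattices, but it does not produce a coherent choice of bases along the whole tower: the basis of $K_{j+1}^\perp$ that diagonalizes the $j$-th inclusion need not diagonalize the $(j+1)$-st, so you cannot in general choose coordinates on all the tori $\Gamma/K_j$ simultaneously so that every bonding isogeny is diagonal. Moreover, even for a diagonal isogeny $\mathrm{diag}(m_1,\dots,m_d)$ the preimage of a dyadic cube is a box of side $2^{-n}/m_\ell$ in the $\ell$-th coordinate, which the dyadic grid at the next level does not refine unless each $m_\ell$ is a power of $2$; so nestedness fails for literal dyadic cubes. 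One must instead use grids of congruent boxes whose side lengths absorb the accumulated elementary divisors (a translated box still meets at most $2^d$ boxes of a grid of congruent boxes, so the overlap bound survives), and verify that such grids can be chosen compatibly along the tower. This is exactly the delicate point: the paper itself warns that [Yu65, Theorem 7.3] contains an error here, reproduced in [Th71, Theorem 2.6] and only corrected in [LSW90, Lemma B.5]. As written, your argument defers the one step that actually requires care to a reference that is known to be wrong at that step, so the abelian case is not established.
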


\begin{proof}
Rigidity for totally disconnected groups and finite-dimensional connected abelian groups is shown in theorems 7.2 and 7.3 of [Yz65]. There is a minor error in the abelian case, reproduced in [Th71, theorem 2.6]. It is corrected in [LSW90, lemma B.5]. Compact Lie groups were proven to be rigid in [Th71, theorem 2.5].
\end{proof}

Theorem \ref{thm:thomas} now follows from the above and proposition \ref{prop:rigid}. I conjecture that theorem \ref{thm:thomas} (and therefore theorem \ref{thm:freeyuz}) holds for all compact separable groups $\Gamma$.



\end{document}